\definecolor{rouge}{rgb}{0.7,0.00,0.00}
\definecolor{vert}{rgb}{0.00,0.5,0.00}
\definecolor{bleu}{rgb}{0.00,0.00,0.8}
\newtheorem{theorem}{Theorem}[section]
\newtheorem*{theorem*}{Theorem}
\newtheorem{lemma}[theorem]{Lemma}
\newtheorem{definition}[theorem]{Definition}
\newtheorem{proposition}[theorem]{Proposition}
\newtheorem{condition}{Condition}
\newtheorem{conditionA}{A\kern-0.1mm}
\theoremstyle{definition}
\newtheorem{remark}[theorem]{Remark}
\def \eref#1{\hbox{(\ref{#1})}}
\numberwithin{equation}{section}
\def\bf#1{\mathbf{#1}}
\def\RR{\mathbb{R}}
\def\PP{\mathbb{P}}
\def\EE{\mathbb{E}}
\def\vare{{\varepsilon}}
\def \eref#1{\hbox{(\ref{#1})}}
\def\EE{\mathbb{ E}}
\def\ep{{\varepsilon}}
\begin{document}

\title[Time-inhomogeneous multi-scale SDEs with partially dissipative coefficients]
{Averaging principles for time-inhomogeneous multi-scale SDEs with partially dissipative coefficients}

\author{Xiaobin Sun\quad }
\curraddr[Sun, X.]{School of Mathematics and Statistics/RIMS, Jiangsu Normal University, Xuzhou, 221116, P.R. China}
\email{xbsun@jsnu.edu.cn}
	
\author{\quad Jian Wang\quad }
\curraddr[Wang, J.]{School of Mathematics and Statistics \& Key Laboratory of Analytical Mathematics and Applications (Ministry of Education) \& Fujian Provincial Key Laboratory of Statistics and Artificial Intelligence, Fujian Normal University, 350117, Fuzhou, P.R. China}
\email{jianwang@fjnu.edu.cn}
	
\author{\quad Yingchao Xie}
\curraddr[Xie, Y.]{School of Mathematics and Statistics/RIMS, Jiangsu Normal University, Xuzhou, 221116, P.R. China}
\email{ycxie@jsnu.edu.cn}

\begin{abstract}
In this paper, we study averaging principles for a class of time-inhomogeneous stochastic differential equations (SDEs) with slow and fast time-scales, where the drift term in the fast component is time-dependent and only partially dissipative. Under asymptotic assumptions on the coefficients, we prove that the slow component $(X^{\varepsilon}_t)_{t\ge0}$ converges strongly to the unique solution $(\bar{X}_t)_{t\ge0}$ to an averaged SDE, when the diffusion coefficient in the slow component is independent of the fast component; on the other hand, we establish the weak convergence of $(X_t^{\varepsilon})_{t\ge0}$ in the space $C([0,T];\RR^n)$ and identify the limiting process by the martingale problem approach, when the diffusion coefficient of the slow component depends on the fast component. The proofs of strong and weak averaging principles are partly based on the study of the existence and uniqueness of an evolution system of measures for time-inhomogeneous SDEs with partially dissipative drift.
\end{abstract}

\date{\today}
\subjclass[2000]{ Primary 34D08, 34D25; Secondary 60H20}
\keywords{Averaging principle; time-nonhomogeneous stochastic differential equation;  multi-scale; partially dissipative coefficient}

\maketitle
\tableofcontents
\allowdisplaybreaks
\section{Background and main results}\label{Section1}

\subsection{Background}
Multi-scale systems encompass diverse applications, where the associated processes evolve across varying temporal and spatial scales. They are prevalent in various fields such as climate modeling, material science, biology, and nonlinear oscillations, see e.g. \cite{EE}. Multi-scale systems frequently exhibit unconventional characteristics that defy common sense, to which using traditional, singular theoretical frameworks inadequate. Consequently, the exploration of multi-scale modeling systems becomes extremely important and indispensable.
The mathematical model of multi-scale stochastic differential equations (SDEs) usually can be described as follows:
$$\left\{\begin{array}{l}
\displaystyle
dX^{\varepsilon}_t=b(X^{\varepsilon}_t, Y^{\varepsilon}_t)\,dt+\sigma(X^{\varepsilon}_t, Y^{\varepsilon}_t)\,dB^1_t,\quad X^{\varepsilon}_0=x\in\mathbb{R}^n, \\
dY^{\varepsilon}_t={\varepsilon}^{-1}f(X^{\varepsilon}_t, Y^{\varepsilon}_t)\,dt+\varepsilon^{-1/2}g(X^{\varepsilon}_t, Y^{\varepsilon}_t)\,dB^2_t,\quad Y^{\varepsilon}_0=y\in\mathbb{R}^m,\\
\end{array}\right.
$$
where $(B^1_t)_{t\ge  0}$ and $(B^2_t)_{t\ge0}$ are two independent $d_1$-dimensional and $d_2$-dimensional standard Brownian motions respectively, $\varepsilon$ is a small and positive parameter describing the ratio of the time scales between the slow component $(X ^{\varepsilon}_t)_{t\ge0}$ and the fast component $(Y^{\varepsilon}_t)_{t\ge0}$, and the coefficients
$b\!:\RR^{n}\times\RR^{m} \to \RR^{n}$, $\sigma\!: \RR^{n} \times\RR^m \to\RR^{n\times d_1}$, $f\!:\RR^{n}\times\RR^{m}\to \RR^{m}$ and $g\!:\RR^{n}\times\RR^{m}\to \RR^{m\times d_2}$
are Borel measurable. The techniques employed in this context are commonly known as averaging and homogenization methods, see e.g. \cite{FW2012, K1968, PS2008} and the references therein. For example, as discussed in Khasminskii's pioneer work \cite{K1968}, it is assumed that there exist $ \bar{b}\!: \mathbb{R}^n\rightarrow \mathbb{R}^n$ and $\bar A\!: \mathbb{R}^n\rightarrow \mathbb{R}^{n\times n}$ such that for all $T\ge0$, $x\in \RR^n$ and $y\in \RR^m$,
\begin{align}
\sup_{t\ge0}\left|\frac{1}{T}\int^{t+T}_t \EE b(x, Y^{x,y}_s)\,ds -\bar b(x)\right|\le  \alpha(T)(1+|x|^2)\label{e:C1}
\end{align}
and
\begin{align}
\sup_{t\ge0}\left|\frac{1}{T}\int^{t+T}_t \EE (\sigma\sigma^{\ast})(x, Y^{x,y}_s)\,ds -\bar A(x)\right|\le  \alpha(T)(1+|x|^2),\label{e:C2}
\end{align}
where $\alpha: \RR_{+}:=[0,\infty)\to \RR_{+}$ satisfies that $\alpha(T)\to 0$ as $T\rightarrow \infty$, and $(Y^{x,y}_t)_{t\ge  0}$ is the unique solution to the following frozen SDE
\begin{equation}\label{e:FSDE}
dY^{x,y}_{t}=f(x, Y^{x,y}_{t})\,dt+g(x,Y^{x,y}_{t})\,dB^2_t,\quad Y^{x,y}_{0}=y.
\end{equation}
Then, the (weak) averaging principle claims that, for any $T>0$, the slow component $(X^{\varepsilon}_t)_{t\in [0,T]}$ converges weakly to the solution $(\bar X_t)_{t\in [0,T]}$ of the following averaged SDE
\begin{equation}\label{e:ASDE}
d\bar{X}_{t}=\bar{b}(\bar{X}_{t})\,dt+\bar{\sigma}(\bar{X}_t)\,d\bar B_t, \quad X_{0}=x
\end{equation}
as $\varepsilon\to 0$, where $\bar{\sigma}(x):=(\bar A(x))^{1/2}$, i.e., $\bar{\sigma}^2(x)=\bar A(x)$ for all $x\in \RR^n$, and $(\bar B_t)_{t\ge0}$ is a standard $n$-dimensional Brownian motion.
Undoubtedly, a key step for understanding the averaging principle lies in comprehending the corresponding averaged SDE \eqref{e:ASDE}, and the most important thing is how to accurately describe the averaged coefficients involved in. In particular, the averaging coefficients depend on the invariant measure of the frozen SDE \eqref{e:FSDE}. For example, if the frozen SDE \eqref{e:FSDE} admits a unique invariant measure $\mu^{x}$, then
$$
\bar{b}(x)=\int_{\mathbb{R}^m}b(x,y)\,\mu^{x}(dy),\quad
\bar A(x)=\int_{\mathbb{R}^m}(\sigma\sigma^{*})(x,y)\,\mu^{x}(dy).
$$
Currently most research results focus on the case where the frozen SDEs are time-homogeneous, because of the rich theoretical research on the existence and uniqueness of invariant measures in this case, see e.g. \cite{B2022,CWW2024,CHR2024,GKK2006,HL2020,HLLS2023,L2010,LRSX2020,PIX2021,RSX2021,SXW2022,SSWX2024,SXX2022,V1991, XLM2018}.

The solution to a time-nonhomogeneous frozen SDE exhibits richer dynamic characteristics, leading to the understanding of the so-called equilibrium point extremely complex. More precisely, consider the following time-dependent SDE:
$$
dY^{x,y}_{t}=f(t, x, Y^{x,y}_{t})\,dt+g(t,x,Y^{x,y}_{t})\,dB^2_t,\label{TDFrozenE}
$$
which corresponds to the following multi-scale nonautonomous SDE:
\begin{equation}\left\{\begin{array}{l}\label{Equation0}
\displaystyle
d X^{\varepsilon}_t = b(X^{\varepsilon}_{t}, Y^{\varepsilon}_{t})\,dt+\sigma(X^{\varepsilon}_{t},Y^{\varepsilon}_{t})\,d B^1_t,\quad X^{\varepsilon}_0=x\in\RR^{n}, \vspace{2mm}\\
\displaystyle d Y^{\varepsilon}_t =\varepsilon^{-1}f(t/\varepsilon,X^{\varepsilon}_t,Y^{\varepsilon}_{t})\,dt+\varepsilon^{-1/2}g(t/\varepsilon,X^{\varepsilon}_t,Y^{\varepsilon}_{t})\,d B^2_t,\quad Y^{\varepsilon}_0=y\in\RR^{m}.
\end{array}\right.
\end{equation}
As far as we know, there are very limited existing works on the time-inhomogeneous stochastic system such like \eref{Equation0}. If the coefficients $f$ and $g$ are time periodic, Wainrib \cite{W2013} and Uda \cite{U2021} studied the strong averaging principle for the stochastic system \eref{Equation0} when $\sigma\equiv 0$. Cerrai and Lunardi \cite{CL2017} explored the weak averaging principle for time-inhomogeneous slow-fast stochastic reaction diffusion equations, when the coefficients of the fast component satisfy the almost periodic condition.
For uniformly dissipative coefficients $f$ and $g$,  recently we used the technique of nonautonomous Poisson equations to study the strong and weak averaging principle for \eref{Equation0} in \cite{SWX2024}.

\subsection{Main results}\label{sec.prelim}
In this paper, we are interested in the following nonautonomous stochastic system
\begin{equation}\left\{\begin{array}{l}\label{Equation}
\displaystyle
d X^{\varepsilon}_t = b(t/\varepsilon,X^{\varepsilon}_{t}, Y^{\varepsilon}_{t})\,dt+\sigma(t/\varepsilon,X^{\varepsilon}_{t},Y^{\varepsilon}_{t})\,d W^1_t,\quad X^{\varepsilon}_0=x\in\RR^{n}, \vspace{2mm}\\
\displaystyle d Y^{\varepsilon}_t =\varepsilon^{-1}f(t/\varepsilon,X^{\varepsilon}_t,Y^{\varepsilon}_{t})\,dt+\varepsilon^{-1/2}\,d W^2_t,\quad Y^{\varepsilon}_0=y\in\RR^{m}.
\end{array}\right.
\end{equation}
Here, $(W^1_t)_{t\ge  0}$ and $(W^2_t)_{t\ge0}$ are mutually independent $d$-dimensional and $m$-dimensional standard Brownian motions respectively, on a complete probability space $(\Omega, \mathscr{F}, \mathbb{P})$ with the natural filtration $\{\mathscr{F}_{t},t\ge 0\}$ generated by $(W^1_t)_{t\ge  0}$ and $(W^2_t)_{t\ge0}$.  The coefficients $b: \RR_{+}\times\RR^n\times\RR^m\rightarrow \RR^{n}$, $\sigma: \RR_{+}\times\RR^n\times \mathbb{R}^m\rightarrow \RR^{n\times d}$ and $f: \RR_{+}\times\RR^n\times\RR^m\rightarrow \RR^{m}$ are measurable. Meanwhile, the coefficient $f(t,x,y)$ enjoys the partial dissipative property with respect to $y$; see \ref{A21} below. Such property seems to be novel within the context of multi-scale SDEs and even in the time-independent framework.

We next provide the assumptions on the coefficients of the system  \eqref{Equation}, and show the main results of the paper, including the strong and weak averaging principles for \eqref{Equation}. First of all, the following three conditions are always assumed throughout the paper.
\begin{conditionA}\label{A1} \it
There exists a constant $C_0>0$ such that for any $t\ge 0$, $x_1,x_2\in \RR^n$ and $y_1,y_2\in \RR^m$,
\begin{align}
&|b(t,x_{1},y_1)-b(t,x_{2},y_2)|+\|\sigma(t,x_1,y_1)-\sigma(t,x_2,y_2)\|\le  C_0(|x_{1}-x_{2}|+|y_1-y_2|),\label{ConA11}\vspace{3mm}\\
&|b(t,x_1,y_1)|\le  C_0(1+|x_1|+|y_1|),\quad \|\sigma(t,x_1,y_1)\|\le  C_0(1+|x_1|).  \label{ConA12}
\end{align}
\end{conditionA}

\begin{conditionA} \label{A2}
There exist constants $C,K$ and $r_0>0$ such that for all $t\ge0$, $x_1,x_2\in\RR^n$ and $y_1,y_2\in \RR^m$,
\begin{equation}\label{A21}\begin{split}
&\langle f(t,x_1,y_1)-f(t,x_2,y_2),y_1-y_2\rangle \\
&\le\begin{cases} C|y_1-y_2|^2+C|x_1-x_2||y_1-y_2|,\quad & |y_1-y_2|\le r_0, \\
-K|y_1-y_2|^2+C|x_1-x_2||y_1-y_2|,\quad &|y_1-y_2|>r_0.\end{cases}
\end{split}\end{equation}
Moreover,
there exists a constant $C_*>0$ such that for all $t\ge0$ and $x\in \RR^n$,
\begin{align}
|f(t,x,{\bf0})|\le  C_*(1+|x|) .\label{A22}
\end{align}
\end{conditionA}

\begin{conditionA}\label{A3} \it
There exists $\hat{b}: \mathbb{R}^n\times \mathbb{R}^m\rightarrow  \mathbb{R}^{n}$ such that for any $T\ge  0$, $x\in \mathbb{R}^n$ and $y\in\mathbb{R}^m$,
\begin{align}\label{Cb}
\sup_{t\ge  0}\frac{1}{T}\left|\int^{t+T}_t (b(s,x,y)-\hat{b}(x,y))\,ds\right|\le  \phi_1(T)(1+|x|+|y|).
\end{align}
Moreover, there exist $k\ge  0$ and $\bar{f}: \mathbb{R}^n\times \mathbb{R}^m\rightarrow  \mathbb{R}^{m}$ such that for all $T\ge0$, $x\in \RR^n$ and $y\in \RR^m$,
\begin{align}
|f(T,x,y)-\bar{f}(x,y)|\le  \phi_2(T)(1+|x|+|y|^k).\label{Cf}
\end{align}
Here, for all $i=1,2$, $\phi_i\ge  0$ defined on $\RR_{+}$ satisfies $\phi_i(T)\rightarrow 0$ as $T\to \infty$.
\end{conditionA}

It is well known that under Assumptions \ref{A1} and \ref{A2}, the SDE \eref{Equation} admits a unique strong solution $(X_t^\varepsilon,Y_t^\varepsilon)_{t\ge0}$, e.g.\ see  \cite[Theorem 3.1.1]{LR2015}.
On the other hand, \eqref{A21} is called a partially dissipative condition in the literature (see \cite{Eberle2014, LW2016}).
It is obvious that the condition \eqref{A21}  is weaker than the classical uniformly dissipative condition.
By applying the elementary inequality $ab\le \delta a^2+C_\delta b^2$ for all $a,b\ge0$ and $\delta>0$, we can see from \eqref{A21} and \eqref{A22} that for all $t\ge0$, $x\in \RR^n$ and $y\in \RR^m$,
\begin{equation}\label{A23}\begin{split}
\langle f(t,x,y),y\rangle&=\langle f(t,x,y)-f(t,x,{\bf0}),y\rangle+\langle f(t,x,{\bf0}),y\rangle\\
&\le -K|y|^2 +(C+K)r_0^2+|f(t,x,{\bf0})||y|\\
&\le  -K|y|^2 +(C+K)r_0^2+C_*(1+|x|)|y|\\
&\le  -K_1|y|^2+K_2(1+|x|^2),
\end{split}\end{equation}
where $0<K_1<K$ and $K_2>0$.
Two examples fulfilling Assumption \ref{A2} are as follows:
$f(t,x,y)=\alpha(t)\nabla V(y)+f_0(x)$ with $V(y)=-|y|^{2\gamma}$ and $\gamma>1$ (see \cite[Example 1.7]{LW2016}), and $f(t,x,y)=\alpha(t) (V_1(y)+V_2(y))+f_0(x)$ with $V_1:\RR^m\to \RR^m$ satisfying the uniformly dissipative condition and $V_2:\RR^m\to \RR^m$ being a bounded and Lipschitz continuous function, where $\inf_{t\ge0} \alpha(t)>0$ and $f_0:\RR^n\to \RR^m$ is a globally Lipschitz continuous function. Furthermore, the fast component $(Y_t^\varepsilon)_{t\ge0}$ is related to the following frozen SDE through a scaling transformation
\begin{equation*}\label{e:FR}
d Y_{t}=f(t,x, Y_{t})\,dt+d W^2_t.
\end{equation*}
This together with \eqref{Cf} indicate that the frozen SDE above and the fast component $(Y_t^\varepsilon)_{t\ge0}$ are closely connected with the following
SDE:
\begin{equation}\label{e:IN}
d\bar Y_{t}=\bar{f}(x,\bar Y_{t})\,dt+d W^2_t.
\end{equation}

In order to study the asymptotic behavior of $(X^{\varepsilon}_t)_{t\ge0}$, we further need the assumption on the diffusion coefficient $\sigma(t,x,y)$, which is much more complex than that for the drift coefficient $b(t,x,y)$ as taken in Assumption {\rm\ref{A3}}.
Most importantly,
we should emphasize that in order to obtain
the asymptotic behavior of $(X^{\varepsilon}_t)_{t\ge0}$ in the strong sense as $\varepsilon\rightarrow 0$, we shall require that $\sigma(t,x,y)=\sigma(t,x)$; that is, the diffusion coefficient $\sigma(t,x,y)$ is independent of $y$. This crucial assumption ensures the validity of the strong averaging principle; see \cite[Section 4.1]{L2010}. More explicitly, we need
the following condition.
\begin{conditionA}\label{A4}
Suppose that $\sigma(t,x,y)=\sigma(t,x)$, and that there exists $\bar{\sigma}: \mathbb{R}^n\rightarrow  \mathbb{R}^{n\times d}$ such that for any $T\ge  0$ and $x\in \mathbb{R}^n$,
\begin{align}\label{Csigam}
\sup_{t\ge  0}\frac{1}{T}\int^{t+T}_t \|\sigma(s,x)-\bar{\sigma}(x)\|^2\,ds\le  \phi_3(T)(1+|x|^2),
\end{align}
where $\phi_3\ge  0$ defined on $\RR_{+}$ satisfies $\phi_3(T)\rightarrow 0$ as $T\to \infty$.
\end{conditionA}

Below is our first main result.

\begin{theorem}{\rm(\textbf {Strong averaging principle})}\label{main result 2}
Suppose that Assumptions {\rm\ref{A1}}, {\rm\ref{A2}}, {\rm\ref{A3}} and {\rm\ref{A4}} hold. Then, for any initial value $(x,y)\in\RR^{n}\times\RR^{m}$ and $T>0$,
\begin{align}
\lim_{\varepsilon\rightarrow 0}\sup_{t\in[0,T]}\mathbb{E}|X_{t}^{\varepsilon}-\bar{X}_{t}|^2=0,\label{R2}
\end{align}
where $(\bar{X}_t)_{t\ge0}$ is the unique strong solution to the following averaged SDE \begin{equation}\label{e:ASDE1}
d\bar{X}_{t}=\bar{b}(\bar{X}_t)\,dt+\bar\sigma(\bar{X}_t)\,d W^1_t,\quad\bar{X}_{0}=x.
\end{equation}
Here, $\bar{b}(x)=\displaystyle\int_{\RR^{m}}\hat b(x,y)\,\mu^x(dy)$, and $\mu^x$ is the unique invariant measure of the  SDE \eqref{e:IN}.
\end{theorem}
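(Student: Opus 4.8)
The plan is to follow the classical Khasminskii time-discretization scheme adapted to the time-inhomogeneous, partially dissipative setting, with the auxiliary process machinery replacing the usual ergodic estimates. First I would establish a priori moment bounds: under Assumptions \ref{A1} and \ref{A2}, using the dissipativity estimate \eqref{A23} on the fast component and Gronwall on the slow component, one gets $\sup_{\varepsilon}\sup_{t\in[0,T]}\EE|X_t^\varepsilon|^2 \le C_T(1+|x|^2+|y|^2)$ and a uniform-in-time bound $\sup_{\varepsilon}\sup_{t\ge0}\EE|Y_t^\varepsilon|^2 \le C(1+|x|^2)$ (here the partial dissipativity is exactly what still yields the quadratic Lyapunov estimate, since the non-dissipative region $\{|y|\le r_0\}$ contributes only a bounded perturbation, as in \eqref{A23}). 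One also needs Hölder-type continuity in time for $X^\varepsilon$, i.e. $\EE|X_{t}^\varepsilon - X_{s}^\varepsilon|^2 \le C_T|t-s|$ uniformly in $\varepsilon$, which follows from \eqref{ConA12} and \eqref{A23}.

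Next I would invoke the existence and uniqueness of an evolution system of measures / invariant measure $\mu^x$ for the limiting frozen SDE \eqref{e:IN}, which the abstract says is established earlier in the paper; the partial dissipativity guarantees this via a reflection-coupling or Harris-type argument (cf. \cite{Eberle2014, LW2016}). With $\mu^x$ in hand, $\bar b(x) = \int_{\RR^m}\hat b(x,y)\,\mu^x(dy)$ is well-defined and, combining \eqref{Cb} with Lipschitz continuity of $\hat b$ and the Lipschitz dependence of $\mu^x$ on $x$ (again from the coupling estimate), $\bar b$ is globally Lipschitz with linear growth; likewise $\bar\sigma$ from \eqref{Csigam}. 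Hence the averaged SDE \eqref{e:ASDE1} has a unique strong solution $(\bar X_t)_{t\ge0}$.

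The heart of the proof is the Khasminskii block argument. I would partition $[0,T]$ into intervals of length $\Delta = \Delta(\varepsilon)$ with $\Delta\to0$ and $\Delta/\varepsilon\to\infty$, and introduce the auxiliary process $(\hat X_t^\varepsilon, \hat Y_t^\varepsilon)$ in which, on each block $[t_k,t_{k+1})$, the slow input to the fast equation is frozen at $\hat X_{t_k}^\varepsilon = X_{t_k}^\varepsilon$ and the slow diffusion/drift coefficients are evaluated along $\hat X$. Then one shows in three steps: (i) $\sup_{t\in[0,T]}\EE|X_t^\varepsilon - \hat X_t^\varepsilon|^2 \le C(\Delta + \Delta/\varepsilon\cdot(\text{error from freezing}))$, controlled by the Hölder continuity of $X^\varepsilon$ and Gronwall; (ii) using the time-change $Y_t^\varepsilon \leftrightarrow Y_{t/\varepsilon}$ and the asymptotic conditions \eqref{Cb}, \eqref{Cf}, \eqref{Csigam} (which quantify how fast time-averages of the true coefficients approach the averaged ones), together with the mixing of the frozen/limiting fast dynamics toward $\mu^x$, show that $\EE\big|\frac1\Delta\int_{t_k}^{t_{k+1}} b(s/\varepsilon, \hat X_{t_k}^\varepsilon, \hat Y_s^\varepsilon)\,ds - \bar b(\hat X_{t_k}^\varepsilon)\big|^2 \to 0$, and analogously for $\sigma\sigma^*$; (iii) assemble (i)–(ii) to get $\sup_{t\in[0,T]}\EE|\hat X_t^\varepsilon - \bar X_t|^2\to0$ via Gronwall using the Lipschitz property of $\bar b,\bar\sigma$, and conclude by the triangle inequality.

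The main obstacle I expect is step (ii), specifically the interplay between three distinct approximations that must all be reconciled at the rate dictated by $\Delta/\varepsilon$: (a) replacing the true time-inhomogeneous drift $f(s/\varepsilon,\cdot,\cdot)$ by its asymptotic limit $\bar f$ via \eqref{Cf} — delicate because of the polynomial growth $|y|^k$ there, which forces higher moment bounds on the fast process and exploits the partial dissipativity quantitatively; (b) the convergence of the (inhomogeneous) frozen semigroup toward equilibrium $\mu^x$ — here, because the limiting fast SDE \eqref{e:IN} is only partially dissipative, one cannot use a naive exponential contraction in $L^2$ and must instead rely on the Wasserstein-contraction / ergodic estimate proved earlier in the paper, with explicit dependence on initial conditions; and (c) the time-average convergence of $b$ itself via \eqref{Cb}. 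Balancing $\Delta=\Delta(\varepsilon)$ so that the freezing error $\sim\Delta$, the ergodicity error $\sim e^{-c\Delta/\varepsilon}$ or $\sim(\varepsilon/\Delta)$, and the asymptotic-coefficient errors $\sim\phi_i(\Delta/\varepsilon)$ all vanish simultaneously is the technical crux; everything else is Gronwall and moment bookkeeping.
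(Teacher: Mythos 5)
There is a genuine gap, and it sits exactly at your step (i). Bounding $\sup_{t\in[0,T]}\EE|X^\varepsilon_t-\hat X^\varepsilon_t|^2$ by ``the H\"older continuity of $X^\varepsilon$ and Gronwall'' implicitly presupposes a strong ($L^2$, pathwise) estimate of the type $\EE|Y^\varepsilon_s-\hat Y^\varepsilon_s|^2\le C\,\sup_r\EE|X^\varepsilon_r-X^\varepsilon_{r(\Delta)}|^2\le C\Delta$, which is what synchronous coupling delivers under a uniformly dissipative fast drift. Under the merely partially dissipative condition \eqref{A21} this fails: on the region $\{|y_1-y_2|\le r_0\}$ the drift difference is expansive, and over a block whose fast-time length $\Delta/\varepsilon\to\infty$ the synchronous-coupling Gronwall bound degenerates. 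This is precisely the obstruction the paper flags before Lemma \ref{DEY}, and the paper's substitute is the ingredient your plan is missing: a purely weak-sense estimate (Lemma \ref{DEY}) for quantities of the form $\EE\big[\int_{t_0}^t\big(F(s/\varepsilon,X^\varepsilon_s,Y^\varepsilon_s)-F(s/\varepsilon,X^\varepsilon_s,\hat Y^\varepsilon_s)\big)Z_{t_0}\,ds\big]$, obtained by recognizing (after the time change $t\mapsto t\varepsilon$) both $Y^\varepsilon$ and $\hat Y^\varepsilon$ as distributional copies of solutions of the general frozen SDE \eqref{GenFRZ} with different random inputs and applying the asymptotic reflection-coupling Wasserstein bound \eqref{REGenFRZ}. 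Because only this weak bound is available, the squared time integral in the Gronwall step cannot be handled by the naive Cauchy--Schwarz reduction $\EE|\int(\cdots)ds|^2\le T\int\EE|\cdots|^2ds$; the paper expands it as a double integral so that the conditioning structure of Lemma \ref{DEY} can be exploited (see the treatment of $I_1(T)$ in \eqref{I1}). Without this device, or an equivalent replacement, your step (i) does not close, and the same difficulty reappears inside your step (ii) the moment you substitute $\hat Y^\varepsilon$ for $Y^\varepsilon$ before invoking ergodicity.

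Beyond that, your single-stage Khasminskii scheme differs from the paper's two-stage route: the paper first compares $X^\varepsilon$ with the solution of the $\varepsilon$-dependent averaged SDE \eqref{MAE}, whose drift is averaged against the evolution system of measures $\mu^x_t$, and only afterwards passes to \eqref{e:ASDE1} by proving $\mathbb{W}_1(\mu^x_t,\mu^x)\to0$ (Lemma \ref{Lemma6.2}, via another reflection coupling comparing $f(t,x,\cdot)$ with $\bar f(x,\cdot)$ through \eqref{Cf}), together with \eqref{Cb} and Assumption \ref{A4}. Folding the three error sources into one block estimate, as you propose, is feasible in principle, but it still requires exactly the same coupling lemmas, so it is not a simplification and in any case does not repair the gap above. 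A minor point: the uniform bound you assert, $\sup_{t\ge0}\EE|Y^\varepsilon_t|^2\le C(1+|x|^2)$, should also involve $|y|^2$ and the running moments of $X^\varepsilon$ (compare \eqref{Y}); this is harmless on a finite horizon $[0,T]$ but should be stated correctly since the constants enter the final rate.
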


For the general diffusion coefficient $\sigma(t,x,y)$, we will turn to investigate the weak averaging principle. For this purpose, we take the following assumption.
\begin{conditionA}\label{A5}
There exists a function $\Sigma: \mathbb{R}^n\times\mathbb{R}^m\rightarrow  \mathbb{R}^{n\times n}$ such that for any $T\ge  0$, $x\in \mathbb{R}^n$ and $y\in \RR^m$,
\begin{align}\label{Csigam1}
\sup_{t\ge0}\frac{1}{T}\left\|\int^{t+T}_t\left[\left(\sigma\sigma^{\ast}\right)(s,x,y)-\Sigma(x,y)\right]ds\right\|\le  \phi_4(T)(1+|x|^2),
\end{align}
where $\phi_4\ge  0$ defined on $\RR_{+}$ satisfies $\phi_4(T)\rightarrow 0$ as $T\to \infty$. Moreover,
\begin{align}
\inf_{t\ge  0,x\in\RR^n,y\in\RR^m,z\in \RR^n\backslash\{\textbf{0}\}}\frac{\langle (\sigma\sigma^{\ast})(t,x,y)\cdot z, z\rangle}{|z|^2}>0. \label{NonD}
\end{align}
\end{conditionA}

\begin{theorem}{\rm(\textbf {Weak averaging principle})}\label{main result 3}
Suppose that Assumptions {\rm\ref{A1}}, {\rm\ref{A2}}, {\rm\ref{A3}} and {\rm\ref{A5}} hold. Then, for any initial value $(x,y)\in\RR^{n}\times\RR^{m}$ and $T>0$, $\{X^{\varepsilon}\}_{\varepsilon\in(0,1]}:=\{(X^{\varepsilon}_t)_{t\ge0}\}_{\varepsilon\in(0,1]}$ converges weakly to $(X_t)_{t\ge0}$ in $C([0,T];\RR^n)$ as $\varepsilon\to 0$, where $(X_t)_{t\ge0}$ is the unique strong solution to the following averaged SDE
\begin{align}\label{e:ASDE2}
d X_{t}=\bar{b}(X_{t})dt+\Theta(X_t)d\bar{W}_t,\quad X_{0}=x\in \mathbb{R}^n,
\end{align}
where $\bar{b}$ is defined in  the SDE \eqref{e:ASDE1}, $\Theta(x):=\bar{\Sigma}^{1/2}(x)$ with $\bar{\Sigma}(x):=\displaystyle\int_{\RR^m}\Sigma(x,y)\,\mu^{x}(dy)$, and  $(\bar{W}_t)_{t\ge  0}$ is a standard $n$-dimensional Brownian motion.
\end{theorem}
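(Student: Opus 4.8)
The plan is to prove weak convergence along the classical Khasminskii route: first establish tightness of $\{X^{\varepsilon}\}_{\varepsilon\in(0,1]}$ in $C([0,T];\RR^n)$, and then identify every subsequential limit as the unique solution of the martingale problem for the averaged generator
\[
\bar{\mathcal L}\varphi(x)=\langle\bar b(x),\nabla\varphi(x)\rangle+\tfrac12\Tr\!\big(\bar\Sigma(x)\nabla^2\varphi(x)\big),\qquad\varphi\in C_c^2(\RR^n).
\]
Applying Itô's formula to $\varphi(X^{\varepsilon}_t)$ and discarding the true martingale term $\int_0^{\cdot}\langle\nabla\varphi(X^{\varepsilon}_r),\sigma(r/\varepsilon,X^{\varepsilon}_r,Y^{\varepsilon}_r)\,dW^1_r\rangle$, the martingale-problem characterization reduces, after pairing against bounded $\mathscr F_s$-measurable functionals, to the single averaging estimate
\[
\lim_{\varepsilon\to0}\EE\left|\int_s^t\big(\mathcal L^{\varepsilon}_r\varphi-\bar{\mathcal L}\varphi\big)(X^{\varepsilon}_r)\,dr\right|=0,\qquad 0\le s<t\le T,
\]
where $\mathcal L^{\varepsilon}_r\varphi(x)=\langle b(r/\varepsilon,x,Y^{\varepsilon}_r),\nabla\varphi(x)\rangle+\tfrac12\Tr\big((\sigma\sigma^{\ast})(r/\varepsilon,x,Y^{\varepsilon}_r)\nabla^2\varphi(x)\big)$. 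Uniqueness of the limiting law is then automatic: \eqref{NonD} gives $\bar\Sigma(x)\ge\lambda_0 I$, so $\Theta=\bar\Sigma^{1/2}$ is well defined and the SDE \eqref{e:ASDE2} is well posed, hence its martingale problem started at $x$ has a unique solution and the full family, not just a subsequence, converges.

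For tightness I would first obtain $\varepsilon$-uniform moment bounds. Feeding the partial-dissipativity consequence \eqref{A23} into Itô's formula for $|Y^{\varepsilon}_t|^{2p}$ gives $\sup_{\varepsilon}\sup_{t\le T}\EE|Y^{\varepsilon}_t|^{2p}\le C\big(1+|x|^{2p}+|y|^{2p}+\sup_{t\le T}\EE|X^{\varepsilon}_t|^{2p}\big)$, and inserting this together with the linear growth in \ref{A1} into Itô's formula for $|X^{\varepsilon}_t|^{2p}$, a Gronwall argument yields $\sup_{\varepsilon}\sup_{t\le T}\EE\big(|X^{\varepsilon}_t|^{2p}+|Y^{\varepsilon}_t|^{2p}\big)<\infty$ for all $p\ge1$. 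A Burkholder--Davis--Gundy estimate of $X^{\varepsilon}_t-X^{\varepsilon}_s=\int_s^tb\,dr+\int_s^t\sigma\,dW^1$ then gives $\EE|X^{\varepsilon}_t-X^{\varepsilon}_s|^{4}\le C_T|t-s|^{2}$ uniformly in $\varepsilon$, and Kolmogorov's criterion yields tightness of $\{X^{\varepsilon}\}$ in $C([0,T];\RR^n)$.

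The averaging estimate is the core, and I would prove it by time-discretization. Partition $[s,t]$ into blocks $I_\ell=[t_\ell,t_{\ell+1}]$ of length $\delta$ with $\varepsilon\ll\delta\ll1$; since $\nabla\varphi,\nabla^2\varphi$ are bounded and Lipschitz and $\EE|X^{\varepsilon}_r-X^{\varepsilon}_{t_\ell}|^{2}\le C\delta$ on $I_\ell$, freezing the slow variable reduces matters to showing, uniformly in $\ell$, that $\tfrac1\delta\EE\big|\int_{I_\ell}\big(b(r/\varepsilon,X^{\varepsilon}_{t_\ell},Y^{\varepsilon}_r)-\bar b(X^{\varepsilon}_{t_\ell})\big)dr\big|\to0$ (letting first $\varepsilon\to0$, then $\delta\to0$), and likewise for $(\sigma\sigma^{\ast})-\bar\Sigma$ via \eqref{Csigam1}. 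On each block introduce the frozen fast process $\hat Y^{\varepsilon}$ solving $d\hat Y^{\varepsilon}_r=\varepsilon^{-1}f(r/\varepsilon,X^{\varepsilon}_{t_\ell},\hat Y^{\varepsilon}_r)\,dr+\varepsilon^{-1/2}\,dW^2_r$ on $I_\ell$ with $\hat Y^{\varepsilon}_{t_\ell}=Y^{\varepsilon}_{t_\ell}$, and split the block integral into an ergodic term $\EE\big|\int_{I_\ell}\big(b(r/\varepsilon,X^{\varepsilon}_{t_\ell},\hat Y^{\varepsilon}_r)-\bar b(X^{\varepsilon}_{t_\ell})\big)dr\big|$ and a comparison term $\EE\int_{I_\ell}\big|b(r/\varepsilon,X^{\varepsilon}_{t_\ell},Y^{\varepsilon}_r)-b(r/\varepsilon,X^{\varepsilon}_{t_\ell},\hat Y^{\varepsilon}_r)\big|\,dr$. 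The ergodic term I handle by the time change $r=\varepsilon u$, which turns it into an average of the genuine-time frozen SDE over a window of length $\delta/\varepsilon\to\infty$: conditionally on $X^{\varepsilon}_{t_\ell}=x_0$ the law of $\hat Y_u$ is attracted exponentially fast to the evolution system of measures $\{\mu^{x_0}_u\}$ of $dY=f(u,x_0,Y)\,du+dW^2$ (whose existence, uniqueness and exponential mixing for partially dissipative drift are exactly the input alluded to in the introduction), and since $f(u,x_0,\cdot)\to\bar f(x_0,\cdot)$ as $u\to\infty$ by \eqref{Cf} one has $\mu^{x_0}_u\Rightarrow\mu^{x_0}$, the invariant measure of \eqref{e:IN}; combining this with the temporal averaging \eqref{Cb} — which makes $\tfrac1L\int_{u_0}^{u_0+L}b(u,x_0,y)\,du$ converge to $\hat b(x_0,y)$ — and with the uniform moments of $\hat Y$ and of $\{\mu^{x_0}_u\}$ to control the growth factors $1+|x_0|+|\hat Y_u|$, one obtains that the ergodic term is $\delta\cdot o(1)$ uniformly in $\ell$.

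The comparison term is where the merely \emph{partial} dissipativity \eqref{A21} bites, and this is the step I expect to be the main obstacle. Since $Y^{\varepsilon}-\hat Y^{\varepsilon}$ is driven only by the drift mismatch, the standard synchronous-coupling computation produces only $\EE|Y^{\varepsilon}_r-\hat Y^{\varepsilon}_r|^{2}\lesssim r_0^{2}+\EE|X^{\varepsilon}_r-X^{\varepsilon}_{t_\ell}|^{2}$, an $O(r_0^{2})$ error that does not vanish, so Gronwall fails. I would instead couple $Y^{\varepsilon}$ and $\hat Y^{\varepsilon}$ by a reflection-type coupling and measure their separation through a bounded, increasing, concave $\rho$ with $\rho(0)=0$ and $\rho(u)\asymp\min(u,1)$, constructed in the spirit of \cite{Eberle2014,LW2016} for eventually dissipative drifts so that $2\rho''(u)+\kappa(u)u\rho'(u)\le-c\rho(u)$ for some $c>0$, with $\kappa$ encoding the dissipativity profile of \eqref{A21} (here one also uses that \eqref{A21} forces $f$ to be Lipschitz in $x$, which bounds the drift mismatch by $C|X^{\varepsilon}_r-X^{\varepsilon}_{t_\ell}|$). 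Then \eqref{A21} yields $\tfrac{d}{dr}\EE\rho(|Y^{\varepsilon}_r-\hat Y^{\varepsilon}_r|)\le-\tfrac c\varepsilon\EE\rho(|Y^{\varepsilon}_r-\hat Y^{\varepsilon}_r|)+\tfrac C\varepsilon\EE|X^{\varepsilon}_r-X^{\varepsilon}_{t_\ell}|$, which integrates (using $\rho(0)=0$) to $\EE\rho(|Y^{\varepsilon}_r-\hat Y^{\varepsilon}_r|)\le C\sqrt\delta$; the equivalence $\rho\asymp\min(\cdot,1)$ together with the uniform $L^{2}$-bound on $Y^{\varepsilon}_r-\hat Y^{\varepsilon}_r$ upgrades this to $\EE|Y^{\varepsilon}_r-\hat Y^{\varepsilon}_r|\le C\delta^{1/4}$, so the comparison term is $O(\delta^{5/4})=o(\delta)$. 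Summing over the $O(1/\delta)$ blocks, sending $\varepsilon\to0$ and then $\delta\to0$, proves the averaging estimate (the $(\sigma\sigma^{\ast})$-part being identical via \eqref{Csigam1}); passing to the limit in Itô's formula with the uniform integrability from the moment bounds shows that every subsequential limit solves the $\bar{\mathcal L}$-martingale problem started at $x$, and the uniqueness noted in the first paragraph then gives weak convergence of $\{X^{\varepsilon}\}$ to $(X_t)_{t\ge0}$ in $C([0,T];\RR^n)$. The two genuinely new ingredients relative to the uniformly dissipative case are this reflection-coupling/concave-distance estimate and, feeding into it, the construction of the evolution system of measures $\{\mu^x_u\}$ for the partially dissipative frozen SDE.
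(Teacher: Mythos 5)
Your overall architecture is the one the paper follows: tightness via uniform moments and increment bounds, a Khasminskii block discretization with an auxiliary process whose slow variable is frozen at the left endpoint of each block, ergodicity of the time-inhomogeneous frozen SDE through the evolution system of measures $\{\mu^x_t\}$ together with $\mathbb{W}_1(\mu^x_t,\mu^x)\to0$ (Lemma \ref{Lemma6.2}), identification of limit points by the martingale problem, and uniqueness from well-posedness of \eqref{e:ASDE2}. The genuine divergence is the comparison between $Y^\varepsilon$ and $\hat Y^\varepsilon$. The paper never estimates the pathwise distance $\EE|Y^\varepsilon_r-\hat Y^\varepsilon_r|$: it keeps $\hat Y^\varepsilon$ driven by the same $W^2$ and, exploiting that in the martingale-problem formulation the difference only appears inside an expectation (possibly weighted by an $\mathscr F_{t_0}$-measurable factor), compares only the conditional laws given $\mathscr F_{k\delta}$ through the Wasserstein bound \eqref{REGenFRZ}; this is exactly what Lemma \ref{DEY} packages. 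You instead put the absolute value inside the expectation, so you need a genuine pathwise $L^1$ bound, obtained by constructing $\hat Y^\varepsilon$ itself through a reflection-type coupling with $Y^\varepsilon$. That variant can be made to work, but you must then (a) check, via L\'evy's characterization and weak uniqueness exactly as in the proof of Lemma \ref{L3.1}(2), that the reflection-coupled $\hat Y^\varepsilon$ still has, conditionally on $\mathscr F_{k\delta}$, the law of the frozen SDE with frozen slow variable (the ergodic step uses only that law), and (b) regularize the coupling near zero separation with a cutoff $\pi_\delta$, since the two processes start at the same point and have different drifts. Two further points you assert rather than prove: the ergodic block estimate with the absolute value inside $\EE$ requires the Khasminskii second-moment expansion with covariance decay from \eqref{WErodicity} (the paper's $\Psi_k$ computation), not merely convergence of the conditional mean; and well-posedness of \eqref{e:ASDE2} requires the local Lipschitz continuity and linear growth of $\Theta=\bar\Sigma^{1/2}$, which uses \eqref{Csigam1}--\eqref{NonD}, the matrix square-root Lipschitz estimate and the $\mathbb{W}_1$-Lipschitz dependence \eqref{Lipmux} of $\mu^x$ on $x$ (Lemma \ref{FianlW}).

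The concrete flaw is in your choice of distance function: no bounded, increasing, concave $\rho$ with $\rho(0)=0$ and $\rho(u)\asymp\min(u,1)$ can satisfy $2\rho''(u)+\kappa(u)u\rho'(u)\le-c\rho(u)$ for all $u>0$ when, as in \eqref{A21}, $\kappa(u)=-K$ only for $u>r_0$. Indeed, for $u>r_0$ the inequality forces $2|\rho''(u)|+Ku\rho'(u)\ge c\rho(r_0)>0$; since $\int^{\infty}|\rho''|\le\rho'(r_0)<\infty$ for a concave increasing $\rho$, the set where $Ku\rho'(u)<c\rho(r_0)/2$ has finite Lebesgue measure, so $\rho'(u)\gtrsim 1/u$ off a finite-measure set, whence $\int^\infty\rho'=\infty$ and $\rho$ is unbounded — a contradiction. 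The repair is simply to drop boundedness and use the function of Lemma \ref{L3.1}(2), $h(u)=1-e^{-c_1u}+c_2u$, which is concave, comparable to $u$, and does satisfy the required contraction under the asymptotic reflection coupling; your differential inequality then yields $\EE h(|Y^\varepsilon_r-\hat Y^\varepsilon_r|)\le C\sqrt{\delta}$, hence $\EE|Y^\varepsilon_r-\hat Y^\varepsilon_r|\le C\sqrt{\delta}$ directly, and the $\delta^{1/4}$ upgrade through the $L^2$ bound becomes unnecessary (with a better rate). With this repair your pathwise route is a legitimate alternative to the paper's law-level Lemma \ref{DEY}: the paper's formulation lets one keep the synchronous auxiliary process and work only with Wasserstein estimates on laws, while yours delivers a pathwise estimate at the cost of redefining the coupling and re-verifying the conditional law of $\hat Y^\varepsilon$.
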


At the end of this part, we  give some specific examples on the coefficients $b$ and $f$ that satisfy Assumption {\rm\ref{A3}}. Similarly, one can take examples for the diffusion coefficient $\sigma(t,x,y)$ that satisfies Assumptions \ref{A4} and \ref{A5} respectively.
\begin{itemize}
\item[{\rm(i)}] Suppose that $|b(t,x,y)-\hat b(x,y)|\le  \psi_1(t)(1+|x|+|y|)$ for all $t\ge0$, $x\in \RR^n$ and $y\in \RR^m$, where $\psi_1:\RR_{+}\to \RR_{+}$ is a locally integrable function and satisfies $\lim_{t\to \infty}\psi_1(t)=0$. Then \eref{Cb} holds with $\phi_1(T)=\sup_{t\ge  0}\frac1 T \int^{t+T}_t \psi_1(s)\,ds$, which converges to $0$ as $T \to \infty$; see Lemma \ref{Pro4.4} below.

\noindent If $b(\cdot,x,y)$ is $\tau$-periodic for any $x\in\RR^n$ and $y\in\RR^m$; that is, for all $t\ge0$, $x\in \RR^n$ and $y\in \RR^m$, $b(t+\tau, x,y)=b(t,x,y)$, then \eref{Cb} holds with $\hat{b}(x,y):=\frac{1}{\tau}\int^\tau_0 b(t,x,y)\,dt$ and  $\phi_1(T)=\frac{C}{T}$; see \cite[Lemma 5.1]{SWX2024}.

\item[{\rm(ii)}] If $f(T,x,y)=\alpha(T)+\bar f(x,y)$  with $\alpha:\RR_{+}\to \RR$ satisfying $\lim_{T\to \infty}|\alpha(T)|=0$, then \eref{Cf} holds with $\phi_2(T)=|\alpha(T)|$ and $k=0$.

   \noindent If $f(T,x,y)=\beta(T)\bar f(x,y)$ with $|\bar f(x,y)|\le  C(1+|x|+|y|^k)$ for some $k\ge  0$ and $\beta:\RR_{+}\to (0,\infty)$ satisfying  $\lim_{T\to \infty}\beta(T)=1$,  then \eqref{Cf} holds with $\phi_2(T)=|\beta(T)-1|$.
\end{itemize}

\subsection{Approaches}

As we discussed before, the key idea to study the averaging principle for \eqref{Equation} is to provide precise characterizations of the averaged coefficients. The frozen SDE
corresponding to \eqref{Equation} is
given by  \begin{align}\label{e:adad}
d Y_t =f(t,x,Y_{t})dt+d W^2_t,
\end{align}
which associates with a time-nonhomogeneous transition semigroup $(P^x_{s,t})_{t\ge  s}$. General speaking, there is no invariant measure any more. A natural generalization of the notation for invariant measure corresponding to $(P^x_{s,t})_{t\ge  s}$ is an evolution system of measures (see e.g. \cite{DR2006,DR2008,LLZ2010}). Recall that, $\{\mu^x_t\}_{t\in \RR}$ is an evolution system of measures  for $(P^x_{s,t})_{t\ge  s}$, if
$$
\int_{\RR^m}P^x_{s,t} \varphi(y)\,\mu^x_s(dy)=\int_{\RR^m}\varphi(y)\,\mu^x_t(dy),\quad -\infty<s\le  t<+\infty,\varphi\in C_b(\RR^m).
$$
Hence, in order to achieve this, we need to extend the SDE \eqref{e:adad} to define for all $t\in \RR$ instead of $t\in \RR_+$, which particularly requires the coefficient $f(t,x,y)$ and the noise $W^2_t$ to be well-defined for all $t\in \RR$; see Subsection \ref{Sub2.1} below for the details.

It is worthwhile to highlight that, when the coefficient $f(t,x,y)$ is partially dissipative with respect to $y$, we will utilize the so-called asymptotic reflection coupling for different drifts in order to establish the weak and strong ergodicity for the frozen SDE \eqref{e:adad} under the $L_1$-Wasserstein distance. To the best of our knowledge, this aspect in the study of the averaging principle appears to be new, and has not been explored before. We emphasize that, due to the partially dissipative drift, it seems that the method via the Poisson equation  employed in \cite{SWX2024} is not applicable in the current context, and in particular it seems challenging to derive the second derivative of $P^x_{s,t} \varphi(y)$ with respect to $y$, which is essential for constructing the solution to the Poisson equation.

Below we briefly state the approaches to the proofs of Theorems \ref{main result 2} and \ref{main result 3}, which partly indicate the novelties of our paper.

\subsubsection{\textbf{Strong averaging principle}}  Our first contribution lies in the study of the asymptotic behavior for $\{X^\varepsilon\}_{\varepsilon\in(0,1]}$ in the strong sense as $\varepsilon\to 0$, in the case of $\sigma(t,x,y)=\sigma(t,x)$ for all $t\ge0$, $x\in \RR^n$ and $y\in \RR^m$. To accomplish this objective, we will proceed in two steps.

\textbf{Step 1:} Based on the observation above, it is very natural to introduce the averaged drift coefficient as follows:
$$
\bar{b}(t,x)=\int b(t,x,y)\,\mu^x_t(dy),
$$
which leads to the following averaged SDE:
\begin{align}
d\bar{X}^{\varepsilon}_{t}=\bar{b}(t/\varepsilon,\bar{X}^{\varepsilon}_t)\,dt+\sigma(t/\varepsilon,\bar{X}^{\varepsilon}_t)\,d W^1_t,\quad\bar{X}^{\varepsilon}_{0}=x.\label{MAE}
\end{align} Here, $\{\mu^x_t\}_{t\in \RR}$ is an evolution system of measures  for the Markov semigroup $(P^x_{s,t})_{t\ge  s}$ corresponding to the extension of the frozen SDE \eqref{e:adad}.
Thus, we will first demonstrate that
\begin{align}
\sup_{t\in [0,T]}\EE|X^{\varepsilon}_t-\bar{X}^{\varepsilon}_{t}|^2\rightarrow 0, \quad \varepsilon\rightarrow 0,\label{Step1}
\end{align}
where $(\bar{X}^{\varepsilon}_t)_{t\ge0}$ is the solution to the SDE \eqref{MAE}.

\textbf{Step 2:} It is worth mentioning that the coefficients of
the SDE \eqref{MAE} still depend on $\varepsilon$.
In order to achieve coefficients in the final averaged SDE that are independent of $\varepsilon$, it is sufficient to guarantee that
\begin{align}
\frac{1}{T}\displaystyle\int^{t+T}_t \bar b(s,x)ds\rightarrow \bar{b}(x),\quad T \to \infty \label{e:Cb}
\end{align}
uniformly for $t$, and the condition \eqref{Csigam} holds.
As stated in \eqref{Step1}, to deal with the averaging principle for the nonautonomous stochastic system
\eqref{Equation}, we shall turn to a study of the averaged SDE \eqref{MAE}, as $\vare\to 0$.
Starting from \eqref{MAE}, the condition \eqref{e:Cb} is essentially equivalent to \eqref{e:C1} by taking $Y^{x,y}_t=t$, see \cite[(6)]{V1991}. For this purpose, in contrast to the time-periodic case imposed on the coefficients (see \cite{U2021, W2013}), we introduce here a reasonable Assumption \ref{A3}.
Such assumption guarantees that $\mathbb{W}_1(\mu^x_t, \mu^x)\rightarrow 0$ as $t\to \infty$, and it in turn indicates that \eqref{e:Cb} holds with $\bar{b}(x)=\int_{\RR^{m}}\hat b(x,y)\,\mu^x(dy)$, thanks to \eqref{Cb}. Here, $\mu^x$ is the unique invariant measure of the SDE \eqref{e:IN}.
Thus, in this step we will prove that
\begin{align}
\sup_{t\in [0, T]}\EE|\bar X_{t}^{\varepsilon}-\bar{X}_{t}|^2\rightarrow 0,\quad  \varepsilon\rightarrow 0, \label{Step2}
\end{align}
where $(\bar{X}_t)_{t\ge0}$ is the solution to the ultimate averaged SDE \eqref{e:ASDE1} given in Theorem \ref{main result 2}. Combining \eqref{Step1} with \eqref{Step2}, we achieve the strong averaging principle.

\subsubsection{\textbf{Weak averaging principle}}  It is crucial to note that the assumption  $\sigma(t,x,y)=\sigma(t,x)$
is essential to establish the strong convergence. In the absence of this assumption, the strong convergence may not hold. However, we can expect the weak convergence in the general setting. For this,  Assumption \ref{A5} on $\sigma(t,x,y)$ is needed, which is related to \eqref{e:C2}; see \cite[(7)]{V1991}. Hence, our second significant contribution lies in proving that $\{X^\varepsilon\}_{\varepsilon\in(0,1]}$ converges weakly to $X$ in $C([0,T];\RR^n)$ as $\varepsilon\to 0$, where $X$ is the unique solution to the
SDE \eqref{e:ASDE2} given in Theorem \ref{main result 3}.

To establish the weak convergence of $\{X^\varepsilon\}_{\varepsilon>0}$ in $C([0,T];\RR^n)$ for every $T>0$, we proceed in the following three steps.

\textbf{Step 1:}  We show that the family of the processes $\{X^{\varepsilon}\}_{\varepsilon\in (0,1]}$ is tight in $C([0,T];\mathbb{R}^n)$. This ensures that for any sequence $\{\varepsilon_k\}_{k\ge1}$ tending to zero, there exists a subsequence of $\{\varepsilon_k\}_{k\ge  1}$ (which is still denoted by $\{\varepsilon_k\}_{k\ge1}$ for simplicity) such that $X^{\varepsilon_k}$ converges weakly to some accumulation point $X$ in $C([0,T];\mathbb{R}^n)$ as $k\to \infty$.

\textbf{Step 2:} To identify the weak limit $X$, we utilize the martingale problem approach. We show that the limiting process $X$ is a solution to the martingale problem associated with the SDE \eqref{e:ASDE2}.

\textbf{Step 3:} We rely on the uniqueness of the strong solution to the SDE \eqref{e:ASDE2} to conclude that the entire family $\{X^{\varepsilon}\}_{\varepsilon\in (0,1]}$ converges weakly to $X$ as $\varepsilon\to0$.

\ \

We shall mention that in our paper we are only concerned with the stochastic system \eqref{Equation}, where the fast component $(Y_t^\ep)_{t\ge0}$ is driven by additive Brownian motion. This is only for the simplicity of the notations. Indeed, the approach of our paper works for the case that the fast component $(Y_t^\ep)_{t\ge0}$ has the multiplicative noise. The main technical differences are due to proofs of Lemma \ref{L3.1}(2) and Lemma \ref{Lemma6.2} below. In order to obtain the corresponding assertions, one can make use of the mixture of the asymptotic coupling by reflection and the synchronous coupling; see \cite{PW2006, Wang2} for the details.

\ \

The remainder of the paper is structured as follows. In Section 2, we will study the existence and uniqueness of an evolution system of measures for the time-dependent SDE \eqref{e:adad} with partially dissipative drift. This involves analyzing the weak ergodicity of the SDE \eqref{e:adad} under the $L_1$-Wasserstein distance. Here, we also establish a priori estimates of the stochastic system \eqref{Equation}, and a quantitative difference between the fast component and its  auxiliary process. Sections 3 and 4 are devoted to the proofs of the strong averaging principle and the weak averaging principle, respectively.
Throughout this paper, we use $C$ to represent constants whose value may vary from line to line. We use $C_{T}$ (resp. $C_{p,T}$) to emphasize that the constant depends on $T$ (resp. $p$ and $T$).

\section{Preliminaries}

Denote by $|\cdot|$ and  $\langle\cdot, \cdot\rangle$ the Euclidean vector norm and the usual Euclidean inner product, respectively.
Let $\|\cdot\|$ be the matrix or the operator norm if there is no confusion possible.  Let $\textbf{0}$ be the zero row vector in $\RR^m$ or $\RR^n$.
Let $\mathscr{P}(\RR^m)$ be the set of all probability measures on $(\RR^m, \mathscr{B}(\RR^m))$. The {Wasserstein distance} $\mathbb{W}_d$ associated with a metric $d(x,y)$ on $\RR^m$ is defined by
$$
\mathbb{W}_{d}(\mu_1,\mu_2):=\inf_{Q\in \mathscr{C}(\mu,\nu)} \int_{ \RR^m\times  \RR^m}d(x,y)\,Q(d x,d y),\quad \mu,\nu\in \mathscr{P}(\RR^m),
$$
where $\mathscr{C}(\mu,\nu)$ is the set of all coupling measures $Q$ on $ \RR^m\times \RR^m$ of $\mu$ and $\nu$, i.e., for any $A\in \mathscr{B}(\RR^m)$,
$$Q(A\times \RR^m)=\mu(A),\quad Q(\RR^m\times A)=\nu(A).$$
If $d(x,y)=|x-y|$, then $\mathbb{W}_{d}$ is the standard $L^1$-{Wasserstein distance} which is denoted by $\mathbb{W}_1(\mu_1,\mu_2)$ throughout the paper. Let
$$\mathscr{P}_1(\RR^m):=\left\{\mu\in\mathscr{P}(\RR^m) : \int_{\RR^m} |z|\,\mu(dz)<\infty \right\}.$$
Then, $\left(\mathscr{P}_1(\RR^m),\mathbb{W}_{1}\right)$  is  a complete metric space, see, e.g., \cite[Theorem 5.4]{Chen1992}. Furthermore, by \cite[Theorem 5.10]{Chen1992},
\begin{align*}
\mathbb{W}_1(\mu,\nu)=\sup\left\{|\mu(f)-\nu(f)|: f\in \mathrm{Lip}(\RR^m)\hbox{ with } {\rm Lip}(f)\le  1\right\},
\end{align*}
where $\mathrm{Lip}(\RR^m)$ is the set of Lipschitz continuous functions on $\RR^m$ and ${\rm Lip}(f)$ denotes the Lipschitz constant of $f\in \mathrm{Lip}(\RR^m)$.

In this section, we first study the existence and the uniqueness of an evolution system of measures for the extension of the time-dependent SDE \eqref{e:adad} with partially dissipative drift $f(t,x,\cdot)$. Secondly, we establish uniform bounds for the moments of the solution ($X_{t}^{\varepsilon}, Y_{t}^{\varepsilon})_{t\ge0}$ to the stochastic  system (\ref{Equation}). Finally, we construct an auxiliary process $(\hat Y^{\varepsilon}_t)_{t\ge0}$ and show the difference between the processes $(Y^{\varepsilon}_t)_{t\ge0}$ and $(\hat Y^{\varepsilon}_t)_{t\ge0}$ in the weak sense, which is crucial in the proofs of our main results.

\subsection{An evolution system of measures for SDEs with partially dissipative drift}\label{Sub2.1}

In order to study the existence and the uniqueness of an evolution system of measures for the frozen SDE \eqref{e:adad}, we need to extend it to define for all $t\in \RR$ instead of $t\in \RR_+$. To do this, letting
$(\tilde{W}^2_t)_{t\ge 0}$
be an independent copy of Brownian motion $(W^2_t)_{t\ge 0}$,  we define
$$
{W}^2_t=\begin{cases}
W^2_t,\quad\,\, t\ge 0,\\
\tilde{W}^2_{-t}, \quad t< 0.
\end{cases}
$$
For the coefficient $f(t,x,y)$, we will use its symmetric extension, i.e., for all $x\in \RR^n$ and $y\in \RR^m$,
$$
f(t,x,y)=
\begin{cases}
f(t,x,y),\quad\,\,  t\ge 0,\\
f(-t,x,y),\quad  t< 0.
\end{cases}
$$
In particular, the extended function $f$ satisfies Assumption {\rm\ref{A2}} for all $t\in\RR$, $x\in \RR^n$ and $y\in \RR^m$. It is important to note that the extended function $f$
that meets Assumption {\rm\ref{A2}} is not unique.  Nevertheless, we here simply utilize the symmetric extension, as it is enough for our purpose.

Now, we consider the following time-inhomogeneous frozen SDE:
\begin{equation}
dY_{t}=f(t,x,Y_{t})dt+d W^2_t,\quad  Y_{s}=y\in \RR^{m}, x\in \RR^n,
s,t\in \RR \hbox{ with }
 t\ge s. \label{FrozenE}
\end{equation}
Then, for any fixed $x\in \RR^n$,  any initial value $y\in\RR^m$ and $s\in\mathbb{R}$, the SDE \eref{FrozenE} has a unique strong solution, denoted by $(Y^{s,x,y}_t)_{t\ge s}$, which is a time-inhomogeneous Markov process. For any bounded measurable function $\varphi$ on $\RR^m$,
the associated inhomogeneous Markov semigroup is given by
$$
P^{x}_{s,t}\varphi(y):=\EE\varphi(Y_{t}^{s,x,y}), \quad x\in\RR^n,y\in\RR^{m},  s,t\in \RR \hbox{ with } t\ge s.
$$
We have $P^{x}_{s,t}=P^{x}_{s,r}\circ P^{x}_{r,t}$ for $s\le  r\le  t$, and $\delta_y P^{x}_{s,t}$ is the distribution of $Y^{s,x,y}_t$.

\begin{lemma}\label{L3.1}
Suppose that Assumption {\rm\ref{A2}} holds. Then the following statements hold.
\begin{itemize}
\item[{\rm (1)}] For any $p\ge 4$, there exist constants $C_{1,p}$ and $C_{2,p}>0$ such that for all $t\ge s$, $x\in \RR^n$ and $y\in \RR^m$,
\begin{equation}
\EE|Y_{t}^{s,x,y} |^{p} \le e^{-C_{1,p}(t-s)}|y|^p+C_{2,p}(1+|x|^p);\label{uniformEY}
\end{equation}
\item[{\rm (2)}] There exist constants $\beta$ and $C>0$ such that for any $t \ge s$, $x_1,x_2\in\RR^{n}$ and $y_1,y_2\in\RR^{m}$,
\begin{align}
\mathbb{W}_1 (\delta_{y_1}P^{x_1}_{s,t}, \delta_{y_2}P^{x_2}_{s,t} )\le Ce^{-\beta(t-s) }|y_1-y_2|+C|x_1-x_2|.\label{FR1}
\end{align}
\end{itemize}
\end{lemma}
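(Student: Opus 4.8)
\textbf{Proof proposal for Lemma \ref{L3.1}.}

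The plan is to treat the two parts in sequence, since the moment bound (1) provides the integrability needed to make sense of couplings in (2). For part (1), I would apply It\^o's formula to $|Y^{s,x,y}_t|^p$ and use the dissipativity estimate \eqref{A23}, namely $\langle f(t,x,y),y\rangle\le -K_1|y|^2+K_2(1+|x|^2)$. Indeed, $d|Y_t|^p = p|Y_t|^{p-2}\langle Y_t,f(t,x,Y_t)\rangle\,dt + p|Y_t|^{p-2}\langle Y_t,dW^2_t\rangle + \tfrac12 p(p-2+m)|Y_t|^{p-2}\,dt$ (collecting the second-order terms), so that after taking expectations
\[
\frac{d}{dt}\EE|Y_t|^p \le -pK_1\EE|Y_t|^p + p\EE\!\left[|Y_t|^{p-2}\big(K_2(1+|x|^2)+C_{m,p}\big)\right].
\]
Using Young's inequality $|Y_t|^{p-2}a\le \delta|Y_t|^p + C_\delta a^{p/2}$ with $\delta$ small enough to absorb into the $-pK_1\EE|Y_t|^p$ term, one gets $\frac{d}{dt}\EE|Y_t|^p\le -C_{1,p}\EE|Y_t|^p + C_{2,p}(1+|x|^p)$ for some $C_{1,p}>0$, and then Gronwall's inequality yields \eqref{uniformEY}. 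A standard localization/stopping-time argument handles the a priori integrability needed to justify taking expectations.

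For part (2), the main tool is the asymptotic reflection coupling for different drifts, as announced in the introduction. Fix $x_1,x_2,y_1,y_2$ and $s$. I would construct on a common probability space a pair $(Y^1_t,Y^2_t)_{t\ge s}$ with $Y^i_s=y_i$, where $Y^i$ solves \eqref{FrozenE} with parameter $x_i$, driven by Brownian motions coupled by reflection while $|Y^1_t-Y^2_t|$ is above a suitable threshold (chosen in terms of $r_0$ from \eqref{A21}) and by synchronous coupling once they are close; this is the standard Eberle-type construction (see \cite{Eberle2014, LW2016}). Writing $Z_t:=Y^1_t-Y^2_t$, one builds a concave, strictly increasing function $\psi$ with $\psi(0)=0$ and $\psi(r)\asymp r$, together with a reflection-coupling computation, so that $\EE\psi(|Z_t|)$ satisfies a differential inequality of the form $\frac{d}{dt}\EE\psi(|Z_t|)\le -c\,\EE\psi(|Z_t|) + C|x_1-x_2|$, where the extra $C|x_1-x_2|$ term comes precisely from the drift-difference contribution $\langle f(t,x_1,\cdot)-f(t,x_2,\cdot),\cdot\rangle$ controlled by the $C|x_1-x_2||y_1-y_2|$ terms in \eqref{A21} together with Lipschitz continuity of $f$ in $x$ (which follows from \eqref{ConA11}; note the extended $f$ inherits this). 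Gronwall then gives $\EE\psi(|Z_t|)\le e^{-c(t-s)}\psi(|y_1-y_2|) + \tfrac{C}{c}|x_1-x_2|$, and the two-sided linear comparability of $\psi$ with the identity converts this into \eqref{FR1} after bounding $\mathbb{W}_1(\delta_{y_1}P^{x_1}_{s,t},\delta_{y_2}P^{x_2}_{s,t})\le\EE|Z_t|$.

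The main obstacle is the construction of the coupling function $\psi$ and the verification of the differential inequality in the partially dissipative regime: unlike the uniformly dissipative case, the quantity $\langle f(t,x,y_1)-f(t,x,y_2),y_1-y_2\rangle$ can be positive (of order $C|y_1-y_2|^2$) when $|y_1-y_2|\le r_0$, so one cannot expect pure contraction and must exploit the noise through the reflection coupling to beat the bad drift on the region $\{|Z_t|\le r_0\}$. Choosing $\psi$ so that the second-order (reflection) term dominates the bad first-order term there, while the genuine dissipativity $-K|Z_t|^2$ takes over on $\{|Z_t|>r_0\}$, is the delicate quantitative step; it is essentially the Eberle/Luo--Wang scheme, and the only genuinely new bookkeeping is tracking the $x$-dependence uniformly in $t$ and $s$. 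One should also note that since the coefficients are only assumed measurable in $t$, all estimates are done pathwise/with It\^o calculus and no regularity in $t$ is needed, and the bound is uniform in $s$ because \eqref{A21}, \eqref{A22}, \eqref{ConA11} are uniform in $t$.
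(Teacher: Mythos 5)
Your overall strategy coincides with the paper's: part (1) is It\^o's formula plus \eqref{A23}, Young's inequality and a comparison/Gronwall argument, and part (2) is a reflection-type coupling with a concave distance function $h(r)\asymp r$ yielding a differential inequality $\frac{d}{dt}\EE h(|Z_t|)\le -\beta\,\EE h(|Z_t|)+C|x_1-x_2|$, then Gronwall and the two-sided linear bounds on $h$. Two points in your write-up need correction, one of which is substantive. First, you describe the coupling as reflection above ``a suitable threshold chosen in terms of $r_0$'' and synchronous below it, but later (correctly) insist that the second-order reflection term must beat the expansive drift on the whole region $\{|Z_t|\le r_0\}$. These two statements are incompatible: if the coupling is synchronous on an $r_0$-scale neighbourhood of the diagonal, the generator acting on $h(|Z_t|)$ has no second-order term there, and since on $\{|y_1-y_2|\le r_0\}$ the drift only satisfies the expansive bound in \eqref{A21}, no choice of concave $h$ gives contraction on that set. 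What the paper does (the ``asymptotic'' reflection coupling) is to mollify the reflection only on a $\delta$-neighbourhood of the diagonal with $\delta$ \emph{unrelated} to $r_0$, track the resulting error terms of size $O(\delta)+O(h(\delta))$ through the comparison argument, and let $\delta\to0$ at the very end; this limit is unavoidable because, with $x_1\neq x_2$, the two processes have different drifts and can never be merged exactly, so the switch to synchronous coupling near the diagonal is a pure well-posedness device, not part of the contraction mechanism. Second, your claim that Lipschitz continuity of $f$ in $x$ ``follows from \eqref{ConA11}'' is wrong: \eqref{ConA11} concerns $b$ and $\sigma$ only, and $f$ is not assumed Lipschitz in $x$. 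This is harmless for the scheme, because the only place the $x$-difference enters is through $\langle f(t,x_1,y_1)-f(t,x_2,y_2),\,y_1-y_2\rangle\,h'(|y_1-y_2|)/|y_1-y_2|$, which \eqref{A21} controls directly via the $C|x_1-x_2||y_1-y_2|$ term; you should invoke \eqref{A21} alone here, as the paper does. With the coupling regions repaired as above (reflection active on $\{|Z_t|>\delta\}$, $\delta\to0$, with the bad region $\{\delta<|Z_t|\le r_0\}$ handled by choosing $c_1,c_2$ in $h$ so that $h''$ dominates), your argument reproduces \eqref{FR1}, and part (1) is fine as written.
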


\begin{proof}
(1)  It follows from \eqref{FrozenE} that for all $t\ge s$, $x\in \RR^n$ and $y\in \RR^m$,
\begin{align*}
Y_{t}^{s,x,y} =y+\int_{s}^{t} f(r,x, Y_{r }^{s,x,y})dr + \int^t_s d W^2_{r}.
\end{align*}
By applying the It\^{o} formula and taking the expectation, we get that for any $p\ge 4$,
\begin{align*}
\mathbb{E}\left|Y_{t}^{s,x,y}\right|^{p}=&|y|^p\!+\!p\mathbb{E}\int_s^t\!\left|Y_{r}^{s,x,y}\right|^{p-2}\left\langle Y_{r}^{s,x,y},f(r,x,Y_{r}^{s,x,y})\right\rangle dr\!+\!\frac{p(p-1)}{2}\int_{s} ^{t}\!\EE|Y^{s,x,y}_r|^{p-2}\,dr.
\end{align*}
By \eref{A23} and Young's inequality, for some $0<K_3<K_1$, it holds that
\begin{align*}
\frac{d}{dt}\mathbb{E}|Y_{t}^{s,x,y}|^{p}=& p\mathbb{E}\left[|Y_{r}^{s,x,y}|^{p-2}\langle Y_{t}^{s,x,y},f(t,x,Y_{t}^{s,x,y})\rangle\right]+\frac{p(p-1)}{2}\mathbb{E}|Y_{t}^{s,x,y}|^{p-2} \nonumber\\
\le & -pK_1\mathbb{E}|Y_{t}^{s,x,y} |^{p}+C_p\mathbb{E}|Y_{t}^{s,x,y}|^{p-2}(1+|x|^2) \nonumber\\
\le & -pK_3\mathbb{E}|Y_{t}^{s,x,y} |^{p}+C_p(1+|x|^p).
\end{align*}
The comparison theorem yields that for all $t\ge s$, $x\in \RR^n$ and $y\in \RR^m$,
\begin{align*}
\mathbb{E}|Y_{t}^{s,x,y}|^{p}\le & e^{-p K_3(t-s)}|y|^p+C_p(1+|x|^p)\int^t_s e^{-p K_3(t-u)}du\\
\le & e^{-p K_3(t-s)}|y|^p+C_p(1+|x|^p).
\end{align*}
Hence, \eqref{uniformEY} is proved.

(2) We will make use of the so-called asymptotic reflection coupling; see \cite{DEGZ, Eberle2014} for related applications. Fix $t \ge s$, $x_1,x_2\in\RR^{n}$ and $y_1,y_2\in\RR^{m}$. Let $(W_t^{2,1})_{t\in \RR}$ and  $(W_t^{2,2})_{t\in\RR}$ be two independent $m$-dimensional Brownian motions. For any $\delta>0$, let $\pi_\delta\in C_b^2(\RR_+; [0,1])$ be such that $\pi_\delta(r)=1$ for all $r\ge \delta$ and $\pi_\delta(r)=0$ for all $r\le \delta/2$. Then, according to L\'evy's characterization of Brownian motion and the strong uniqueness of the frozen SDE \eqref{FrozenE}, we can rewrite $(Y^{s,x_1,y_1}_t)_{t\ge s}$ solving the SDE \eqref{FrozenE} with $Y^{s,x_1,y_1}_s=y_1$ as follows (with the same distribution):
$$dY^{s,x_1,y_1}_{t}\!=\!f(t,x_1,Y^{s,x_1,y_1}_{t})dt\!+\!\sqrt{\pi_\delta (|Y^{s,x_1,y_1}_{t}\!-\!Z^{s,x_2,y_2}_{t}|)}\,d W^{2,1}_t\!+\!\sqrt{1\!-\!\pi_\delta (|Y^{s,x_1,y_1}_{t}\!-\!Y^{s,x_2,y_2}_{t}|)}\,d W^{2,2}_t,$$
where $(Z^{s,x_2,y_2}_t)_{t\ge s}$ is a unique strong solution to the following SDE
 \begin{align*}
dZ^{s,x_2,y_2}_t=&f(t,x_2,Z^{s,x_2,y_2}_t)\,d t\\
&+\!  \sqrt{\pi_\delta (|Y^{s,x_1,y_1}_{t}\!-\!Z^{s,x_2,y_2}_{t}|)} \left(d W^{2,1}_t\!-\!2\frac{(Y^{s,x_1,y_1}_t\!-\!Z^{s,x_2,y_2}_t)\langle Y^{s,x_1,y_1}_t\!-\!Z^{s,x_2,y_2}_t, dW^{2,1}_t\rangle}{|Y^{s,x_1,y_1}_t-Z^{s,x_2,y_2}_t|^2}\right)\\
&+ \sqrt{1-\pi_\delta (|Y^{s,x_1,y_1}_{t}-Y^{s,x_2,y_2}_{t}|)}\,d W^{2,2}_t
\end{align*}
with $Z^{s,x_2,y_2}_s=y_2$. In particular, under Assumption {\rm\ref{A2}}, the SDE above has a unique strong solution $(Y^{s,x_1,y_1}_t,Z^{s,x_2,y_2}_t)_{t\ge s}$. L\'evy's characterization of Brownian motion also yields that  $(Y^{s,x_2,y_2}_t)_{t\ge s}$ and $(Z^{s,x_2,y_2}_t)_{t\ge s}$ have the same distribution.
Let $\{\mathscr{L}^{x_1,x_2}_t\}_{t\ge s}$ be the infinitesimal generator of $(Y^{s,x_1,y_1}_t,Z^{s,x_2,y_2}_t)_{t\ge s}$. Then, for any $h\in C^2(\RR)$, we have the following straightforward formula (see e.g., \cite[(3.15)]{PW2006})
$$
\mathscr{L}^{x_1,x_2}_t  h(|y_1\!-\!y_2|) \!=\!2 \pi_\delta (|y_1\!-\!y_2|) h''(|y_1\!-\!y_2|)\!+\!\frac{h'(|y_1\!-\!y_2|)}{ |y_1-y_2| }\langle f(t,x_1,y_1)\!-\!f(t,x_2,y_2), y_1\!-\!y_2\rangle.
$$
This along with \eqref{A21} gives us that for any $h\in C^2(\RR_+)$ with $h'\ge0$ and $h''\le 0$,
\begin{align*}
\mathscr{L}^{x_1,x_2}_t  h(|y_1-y_2|)
\le& 2 \pi_\delta (|y_1\!-\!y_2|) h''(|y_1\!-\!y_2|)\!+\! \left[(C\!+\!K)1_{\{|y_1\!-\!y_2|\le r_0\}}\!-\!K\right]\!|y_1-y_2| h'(|y_1\!-\!y_2|)\\
&+C h'(|y_1-y_2|)|x_1-x_2|.
\end{align*}

Take $d(y_1,y_2)=h(|y_1-y_2|)$ and $h(r)=1-e^{-c_1r}+c_2r$ with $c_1=2Cr_0$ and $c_2=c_1e^{-c_1r_0}.$ One has $c_2r\le h(r)\le (c_1+c_2)r$ for all $r\ge0$. Thus, for $y_1,y_2\in \RR^m$ with $|y_1-y_2|\le r_0$,
\begin{align*}
\mathscr{L}^{x_1,x_2}_td(y_1,y_2)\!\le &\pi_\delta (|y_1\!-\!y_2|)\left[-2c_1^2e^{-c_1|y_1\!-\!y_2|}\!+\!C\left(|y_1\!-\!y_2|\!+\!|x_1\!-\!x_2|\right)(c_1e^{-c_1|y_1-y_2|}\!+\!c_2)\right]\\
&+C(1-\pi_\delta (|y_1-y_2|))  \left(|y_1-y_2|+|x_1-x_2|\right)\left(c_1e^{-c_1|y_1-y_2|}+c_2\right)\\
\le&\pi_\delta (|y_1-y_2|)\left(-c_1^2e^{-c_1r_0}+4 r_0 C^2 |x_1-x_2|\right)+ 4r_0C^2|x_1-x_2|+4r_0C^2\delta\\
\le& \Big[- c_1^2e^{-c_1r_0}/\!\!\sup_{0<r\le r_0} h(r)\Big]h(|y_1\!-\!y_2|)\pi_\delta (|y_1\!-\!y_2|)\!+\!8 r_0 C^2 |x_1\!-\!x_2|\!+\!4r_0C^2\delta\\
\le &\Big[- c_1^2e^{-c_1r_0}/\!\!\sup_{0<r\le r_0} h(r)\Big]d(y_1,y_2)+8 r_0 C^2 |x_1-x_2|\\
&+4r_0C^2\delta+\Big[ c_1^2e^{-c_1r_0}/\sup_{0<r\le r_0} h(r)\Big]h(\delta),
\end{align*}
where in the first inequality we used the definitions of $c_1$ and $c_2$, and in the second and the last inequalities we used the fact that $1-\pi_\delta(r)\neq 0$ if only $r\in [0,\delta]$.
On the other hand, for $y_1,y_2\in \RR^m$ with $|y_1-y_2|>r_0$,
\begin{align*}
\mathscr{L}^{x_1,x_2}_td(y_1,y_2)\le & -c_2 K|y_1-y_2|+4 r_0 C^2 |x_1-x_2|\\
\le&  \Big[-c_2 K \inf_{r\ge r_0} (r/h(r)) \Big] h(|y_1-y_2|)+4 r_0 C^2 |x_1-x_2|\\
=&\Big[-c_2 K\inf_{r\ge r_0} (r/h(r))\Big]d(y_1,y_2)+4 r_0 C^2|x_1-x_2|.
\end{align*}

Put
$$
\beta:=\Big(c_1^2e^{-c_1r_0}/\sup_{0<r\le r_0} h(r)\Big) \wedge \Big(c_2 K\inf_{r\ge r_0} (r/h(r))\Big).$$
We have
$$\mathscr{L}^{x_1,x_2}_td(y_1,y_2)\le -\beta d(y_1,y_2)+8 r_0 C^2|x_1-x_2|+4r_0C^2\delta+\Big[ c_1^2e^{-c_1r_0}/\sup_{0<r\le r_0} h(r)\Big]h(\delta).$$
This implies that
\begin{align*}
\frac{d}{dt}\mathbb{E} d(Y_t^{s,x_1,y_1}, Z_t^{s,x_2,y_2})\le &  -\beta\mathbb{E} d(Y_t^{s,x_1,y_1}, Z_t^{s,x_2,y_2})+8 r_0 C^2|x_1-x_2|\\
&+4r_0C^2\delta+\Big[ c_1^2e^{-c_1r_0}/\sup_{0<r\le r_0} h(r)\Big]h(\delta).
\end{align*}
The comparison theorem yields that for all $t\ge s$, $x_1,x_2\in \RR^n$ and $y_1,y_2\in \RR^m$,
\begin{align*}
\mathbb{E} d(Y_t^{s,x_1,y_1}, Z_t^{s,x_2,y_2})\le & e^{-\beta(t-s) }d(y_1,y_2)+ 8 r_0 C^2|x_1-x_2|\int^t_s e^{-\beta (t-u) } du\\
&+ \Big[4r_0C^2\delta+\Big( c_1^2e^{-c_1r_0}/\sup_{0<r\le r_0} h(r)\Big)h(\delta)\Big]\int^t_s e^{-\beta (t-u) } du\\
\le & e^{-\beta(t-s)}d(y_1,y_2)+8|x_1-x_2| r_0 C^2/\beta\\
&+\Big[4r_0C^2\delta+\Big( c_1^2e^{-c_1r_0}/\sup_{0<r\le r_0} h(r)\Big)h(\delta)\Big]\Big/\beta.
\end{align*}

Finally, noting that $c_2|y_1-y_2|\le  d(y_1,y_2)\le (c_1+c_2)|y_1-y_2|$ for all $y_1,y_2\in \RR^m$, we get that for all $t\ge s$, $x_1,x_2\in \RR^n$ and $y_1,y_2\in \RR^m$,
\begin{align*}
\mathbb{W}_1\left(\delta_{y_1}P^{x_1}_{s,t}, \delta_{y_2}P^{x_2}_{s,t}\right)\le & c_2^{-1}\mathbb{W}_d\left(\delta_{y_1}P^{x_1}_{s,t}, \delta_{y_2}P^{x_2}_{s,t}\right)\\
\le  & c_2^{-1}\mathbb{E} d(Y_t^{s,x_1,y_1}, Z_t^{s,x_2,y_2})\\
\le  & c_2^{-1}(c_1+c_2)e^{-\beta(t-s)}|y_1-y_2|+8 c_2^{-1} |x_1-x_2|r_0 C^2/\beta\\
&+c_2^{-1}\Big[4r_0C^2\delta+\Big( c_1^2e^{-c_1r_0}/\sup_{0<r\le r_0} h(r)\Big)h(\delta)\Big]\Big/\beta,
\end{align*} where in the second inequality we used the fact that the distribution of $Z_t^{s,x_2,y_2}$ is $\delta_{y_2}P^{x_2}_{s,t}$.
The proof of \eqref{FR1} is complete by letting $\delta\to 0$ in the estimate above.
\end{proof}

\begin{remark}
Consider the following general frozen SDE
\begin{align}
d Y_t=f(t,U_t,Y_t)\,dt+dW^2_t,\quad Y_s=V\in\RR^m, t\ge  s,\label{GenFRZ}
\end{align}
where $(U_t)_{t\ge s}$ is an $\RR^n$-valued stochastic process, and $V$ is an $\RR^m$-valued random variable. Suppose that Assumption {\rm\ref{A2}} holds. Denote $(Y^{s,U,V}_t)_{t\ge s}$ by the unique strong solution to the SDE \eref{GenFRZ}. Following the proof of Lemma \ref{L3.1}, we can obtain that there is a constant $C>0$ such that for all $t\ge s$, $\RR^n$-valued stochastic processes $(U_1(t))_{t\ge s}$ and $(U_2(t))_{t\ge s}$, and $\RR^m$-valued random variables $V_1$ and $V_2$,
\begin{align}
\mathbb{W}_1\big(\mathscr{L}_{Y^{s,U_1,V_1}_t}, \mathscr{L}_{Y^{s,U_2,V_2}_t}\big) \le   Ce^{-\beta(t-s)}\EE|V_1-V_2|+C\sup_{s\le  u\le  t}\EE|U_1(u)-U_2(u)|.\label{REGenFRZ}
\end{align}
\end{remark}

\begin{lemma}\label{Le3.3} Suppose that Assumption {\rm\ref{A2}} holds. Then, for any $t\in \RR$ and $x\in\RR^n$, there exists $\mu^x_{t} \in \mathscr{P}_1(\RR^m)$ such that for any $s\le t$ and $y\in\RR^m$,
\begin{align}
\mathbb{W}_1(\delta_{y}P^x_{s,t}, \mu^x_t)\le  C_1\left(1+|x|+|y|\right)e^{-\beta (t-s)},\quad\sup_{t\in\RR}\int_{\RR^m}|y|\mu^x_{t}(dy)\le  C_1(1+|x|),\label{F3.2}
\end{align}
where $\beta>0$ is defined in Lemma $\ref{L3.1}$, and $C_1>0$ is independent of $t$, $s$, $x\in \RR^n$ and $y\in \RR^m$. Moreover, there is a constant $C_2>0$ such that for any $x_1,x_2\in\mathbb{R}^n$,
\begin{align}
\sup_{t\in\mathbb{R}}\mathbb{W}_1(\mu^{x_1}_t, \mu^{x_2}_t)\le  C_2|x_1-x_2|.\label{F3.3}
\end{align}
\end{lemma}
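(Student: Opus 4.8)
The plan is to obtain $\mu^x_t$ as the $\mathbb{W}_1$-limit of the laws $\delta_{\mathbf 0}P^x_{s,t}$ as $s\to-\infty$, and to read off all three displayed estimates from this construction together with Lemma \ref{L3.1}. The one preliminary observation is that the contraction estimate \eqref{FR1}, stated for Dirac initial data, upgrades immediately to arbitrary initial laws: integrating \eqref{FR1} against an optimal $\mathbb{W}_1$-coupling of $\nu_1,\nu_2\in\mathscr{P}_1(\RR^m)$ gives, for each fixed $x\in\RR^n$,
\[
\mathbb{W}_1\!\left(\nu_1 P^{x}_{s,t},\,\nu_2 P^{x}_{s,t}\right)\le Ce^{-\beta(t-s)}\,\mathbb{W}_1(\nu_1,\nu_2),\qquad s\le t,
\]
and, allowing two different parameters $x_1,x_2$, an extra term $C|x_1-x_2|$ on the right. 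I would also record the uniform first-moment bound $\mathbb{W}_1(\delta_{\mathbf 0}P^{x}_{s,r},\delta_{\mathbf 0})=\EE|Y^{s,x,\mathbf 0}_r|\le C(1+|x|)$ for all $s\le r$, which follows from Lemma \ref{L3.1}(1) with $p=4$ and Jensen's inequality (or directly from an It\^o computation on $|Y^{s,x,\mathbf 0}_r|^2$ via \eqref{A23}).

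With these at hand the Cauchy argument is short: for $s_1\le s_2\le t$ the cocycle identity $P^x_{s_1,t}=P^x_{s_1,s_2}\circ P^x_{s_2,t}$ gives $\delta_{\mathbf 0}P^x_{s_1,t}=(\delta_{\mathbf 0}P^x_{s_1,s_2})P^x_{s_2,t}$, so by the upgraded contraction
\[
\mathbb{W}_1\!\left(\delta_{\mathbf 0}P^x_{s_1,t},\,\delta_{\mathbf 0}P^x_{s_2,t}\right)\le Ce^{-\beta(t-s_2)}\,\mathbb{W}_1\!\left(\delta_{\mathbf 0}P^x_{s_1,s_2},\delta_{\mathbf 0}\right)\le C(1+|x|)e^{-\beta(t-s_2)}.
\]
Since $(\mathscr{P}_1(\RR^m),\mathbb{W}_1)$ is complete, $s\mapsto\delta_{\mathbf 0}P^x_{s,t}$ converges as $s\to-\infty$ to some $\mu^x_t\in\mathscr{P}_1(\RR^m)$; the cocycle property plus $\mathbb{W}_1$-continuity of $\nu\mapsto\nu P^x_{s,t}$ then shows $\mu^x_sP^x_{s,t}=\mu^x_t$, i.e.\ $\{\mu^x_t\}_{t\in\RR}$ is an evolution system of measures for $(P^x_{s,t})_{t\ge s}$.

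It remains to derive the three bounds. For the first, for $s'\le s\le t$ write $\delta_{\mathbf 0}P^x_{s',t}=(\delta_{\mathbf 0}P^x_{s',s})P^x_{s,t}$ and apply the upgraded contraction to get $\mathbb{W}_1(\delta_{y}P^x_{s,t},\delta_{\mathbf 0}P^x_{s',t})\le Ce^{-\beta(t-s)}\mathbb{W}_1(\delta_y,\delta_{\mathbf 0}P^x_{s',s})\le Ce^{-\beta(t-s)}\bigl(|y|+C(1+|x|)\bigr)$, and then let $s'\to-\infty$, using the triangle inequality and $\delta_{\mathbf 0}P^x_{s',t}\to\mu^x_t$. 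For the second, since $y\mapsto|y|$ is $1$-Lipschitz, $\int_{\RR^m}|y|\,\mu^x_t(dy)=\mathbb{W}_1(\mu^x_t,\delta_{\mathbf 0})\le\mathbb{W}_1(\mu^x_t,\delta_{\mathbf 0}P^x_{s,t})+\EE|Y^{s,x,\mathbf 0}_t|$; letting $s\to-\infty$, the first term vanishes and the second stays $\le C(1+|x|)$ uniformly in $t$. For the third, \eqref{FR1} with $y_1=y_2=\mathbf 0$ gives $\mathbb{W}_1(\delta_{\mathbf 0}P^{x_1}_{s,t},\delta_{\mathbf 0}P^{x_2}_{s,t})\le C|x_1-x_2|$ uniformly in $s\le t$, and passing to the limit $s\to-\infty$ (again via the triangle inequality) yields $\sup_{t\in\RR}\mathbb{W}_1(\mu^{x_1}_t,\mu^{x_2}_t)\le C|x_1-x_2|$.

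I do not expect a genuine obstacle: this is the standard construction of an evolution system of measures from a Wasserstein contraction. The only points requiring a little care are the passage from the Dirac-data estimate \eqref{FR1} to general initial laws (a routine coupling argument) and the uniform-in-$s$ first-moment control of $Y^{s,x,\mathbf 0}_r$ that makes the Cauchy estimate effective; both are direct consequences of Lemma \ref{L3.1}.
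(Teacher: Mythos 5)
Your proof is correct and follows essentially the same route as the paper: the exponential $\mathbb{W}_1$-contraction of Lemma \ref{L3.1}(2) (which the paper invokes in its random-initial-data form \eqref{REGenFRZ}) together with the uniform moment bound \eqref{uniformEY} makes the laws Cauchy in $(\mathscr{P}_1(\RR^m),\mathbb{W}_1)$ as $s\to-\infty$, and the estimates \eqref{F3.2}--\eqref{F3.3} follow by the same triangle-inequality arguments. The only cosmetic difference is that you anchor the construction at $\delta_{\bf 0}$ and upgrade \eqref{FR1} to general initial laws, whereas the paper compares $\delta_{y}P^x_{s,t}$ with $\delta_{y}P^x_{s-h,t}$ via the flow property and then shows the limit is independent of $y$.
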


\begin{proof}
(i) Fix $t\ge s$, $x\in \RR^n$ and $y\in \RR^m$. For any $h>0$,
\begin{align*}
Y_{t}^{s-h,x,y}= Y_{s}^{s-h,x,y}+\int_{s}^{t} f(r, x, Y_{r}^{s-h,x,y} )\,dr + \int_{s}^{t}  d W^2_{r},
\end{align*}
which implies that $Y_{t}^{s-h,x,y}$ and $Y_{t}^{s,x,Y_{s}^{s-h,x,y}}$ enjoy the same distribution. By \eref{REGenFRZ} and \eref{uniformEY}, we have
\begin{align*}
\mathbb{W}_1\left(\delta_{y}P^{x}_{s,t}, \delta_{y}P^{x}_{s-h,t}\right)= & \mathbb{W}_1\big(\mathscr{L}_{Y^{s,x,y}_t}, \mathscr{L}_{Y^{s,x,Y_{s}^{s-h,x,y}}_t}\big)\nonumber\\
\le  & Ce^{-\beta(t-s) }\EE|y-Y_{s}^{s-h,x,y}|\nonumber\\
\le  & Ce^{-\beta(t-s) }(1+|x|+|y|).
\end{align*}
As a consequence, for fixed $t\in \RR$, $x\in \RR^n$ and $y\in \RR^m$, $\{\mathscr{L}_{Y^{s,x,y}_t}\}_{s\le  t}$ is a Cauchy sequence in $\mathscr{P}_1(\RR^m)$ as $s\to -\infty$, also thanks to \eqref{uniformEY}. Then, there exists  $\mu^{x,y}_t\in\mathscr{P}_1(\RR^m)$ such that for all $t\ge s$, $x\in \RR^n$ and $y\in \RR^m$,
\begin{align}
\mathbb{W}_1\left(\delta_{y}P^{x}_{s,t}, \mu^{x,y}_t\right) \le  Ce^{-\beta (t-s)}(1+|x|+|y|). \label{F3.10}
\end{align}

Next, we are going to prove that $\mu^{x,y}_{t}$ is independent of $y$. In fact, \eref{FR1} implies that for any $t\in \RR$, $x\in\RR^n$ and $y_1,y_2\in \mathbb{R} ^{m} $,
$$\lim_{s \to -\infty} \mathbb{W}_1\!\left(\delta_{y_1}\!P^{x}_{s,t}, \delta_{y_2}\!P^{x}_{s,t}\right) =0.$$
On the other hand, it holds that
$$
\mathbb{W}_1\!\left(\mu^{x,y_1}_t, \mu^{x,y_2}_t\right)\le  \mathbb{W}_1\!\left(\mu^{x,y_1}_t, \delta_{y_1}\!P^{x}_{s,t}\right)+\mathbb{W}_1\!\left(\mu^{x,y_2}_t, \delta_{y_2}\!P^{x}_{s,t}\right)+ \mathbb{W}_1\!\left(\delta_{y_1}\!P^{x}_{s,t}, \delta_{y_2}\!P^{x}_{s,t}\right).
$$
Taking $s\rightarrow -\infty$ and applying \eqref{F3.10}, we get $\mathbb{W}_1\!\left(\mu^{x,y_1}_t, \mu^{x,y_2}_t\right)=0$, and so $\mu^{x,y_1}_t=\mu^{x,y_2}_t$ in $\mathscr{P}_1(\RR^m)$. Below, we denote $\mu_t^{x,y}$ by $\mu_t^x$. Furthermore, it follows from \eqref{uniformEY} and \eqref{F3.10} that for all $x\in\RR^n$,
\begin{align*}
\sup_{t\in\RR}\int_{\RR^m}|y|\,\mu^x_t(dy)\le  C(1+|x|).
\end{align*}
Hence \eref{F3.2} holds, thanks to \eqref{F3.10} again.

(ii) By \eref{FR1} and \eref{F3.10}, we have  for any $t\in \RR$ and $x_1,x_2\in\mathbb{R}^n$,
\begin{align*}
\mathbb{W}_1(\mu^{x_1}_t, \mu^{x_2}_t)\le  &\mathbb{W}_1(\mu^{x_1}_t, \delta_{\textbf{0}}P^{x_1}_{s,t})+\mathbb{W}_1(\mu^{x_2}_t, \delta_{\textbf{0}}P^{x_2}_{s,t})+\mathbb{W}_1\!\left(\delta_{\textbf{0}}P^{x_1}_{s,t}, \delta_{\textbf{0}}P^{x_2}_{s,t}\right)\\
\le &C e^{-\beta(t-s) }(1+|x_1|+|x_2|)+C|x_1-x_2|.
\end{align*}
Taking $s\to -\infty$, it is easy to see \eref{F3.3} holds. The proof is complete.
\end{proof}

Next, we recall the following definition, see e.g. \cite{DR2006, DR2008}.

\begin{definition}
A class of measures $\{\nu^x_t\}_{t\in\RR}$ is called an evolution system of measures for $\{P^x_{s,t}\}_{t\ge s}$, if
\begin{align}
\int_{\RR^m}P^x_{s,t} \varphi(y)\nu^x_s(dy)=\int_{\RR^m}\varphi(y)\nu^x_t(dy),\quad s\le  t,\varphi\in C_b(\RR^m).\label{ESM}
\end{align}
\end{definition}

We have the following statement.
\begin{proposition} \label{Pr2.5} Suppose that  Assumption {\rm\ref{A2}} holds. Then the class of measures $\{\mu^x_{t}\}_{t\in\RR}$ given in Lemma $\ref{Le3.3}$ is an evolution system of measures for $\{P^x_{s,t}\}_{t\ge s}$. Moreover, for any Lipschitz continuous function $\phi$ on $\RR^m$, $t\ge s$ and $x\in \RR^m$,
\begin{align}
\left|P^x_{s,t} \phi(y)-\int_{\RR^m}\phi(z)\,\mu^x_t(dz)\right|\le  C {\rm Lip(\phi)} (1+|x|+|y|)e^{-\beta(t-s)}.\label{WErodicity}
\end{align}
Furthermore, if $\{\nu^x_{t}\}_{t\in\RR}$ is another evolution system of measures for $\{P^x_{s,t}\}_{t\ge s}$ that satisfies
$$
\sup_{t\in\RR}\int_{\RR^m}|y|\nu^x_t(dy)<\infty,\quad   x\in\RR^n,
$$
then $\nu^x_t=\mu^x_t$ for all $t\in\RR$ and $x\in\RR^n$.
\end{proposition}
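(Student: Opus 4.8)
The plan is to read off all three assertions from Lemmas~\ref{L3.1} and~\ref{Le3.3}, which already carry the analytic weight; what remains is bookkeeping. For the evolution-system property, recall from the proof of Lemma~\ref{Le3.3} that $\mu^x_t$ is the $\mathbb{W}_1$-limit of $\delta_yP^x_{r,t}$ as $r\to-\infty$, the limit being independent of $y$, and that $(\mathscr{P}_1(\RR^m),\mathbb{W}_1)$ is complete. First I would upgrade the one-point contraction \eqref{FR1} to the level of measures: given $\mu_1,\mu_2\in\mathscr{P}_1(\RR^m)$, gluing an optimal $\mathbb{W}_1$-coupling of $(\mu_1,\mu_2)$ with (nearly) optimal couplings of $\delta_{y_1}P^x_{s,t}$ and $\delta_{y_2}P^x_{s,t}$ produces a coupling of $\mu_1P^x_{s,t}$ and $\mu_2P^x_{s,t}$, whence $\mathbb{W}_1(\mu_1P^x_{s,t},\mu_2P^x_{s,t})\le Ce^{-\beta(t-s)}\mathbb{W}_1(\mu_1,\mu_2)$; in particular $\mu\mapsto\mu P^x_{s,t}$ is $\mathbb{W}_1$-continuous. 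Then from the Chapman--Kolmogorov identity $\delta_yP^x_{r,t}=(\delta_yP^x_{r,s})P^x_{s,t}$ (for $r\le s\le t$), letting $r\to-\infty$ sends the left side to $\mu^x_t$ and the right side to $\mu^x_sP^x_{s,t}$, so $\mu^x_sP^x_{s,t}=\mu^x_t$; writing out the composition of the measure $\mu^x_s$ with the kernel $P^x_{s,t}$ is exactly \eqref{ESM}, valid for all bounded measurable $\varphi$, in particular $\varphi\in C_b(\RR^m)$. (Alternatively, since \eqref{FrozenE} is Feller --- two solutions driven by the same noise satisfy a pathwise Gr\"onwall estimate because of the one-sided bound \eqref{A21}, so $P^x_{s,t}$ maps $C_b(\RR^m)$ into itself --- one can pass to the limit directly in $\int P^x_{s,t}\varphi\,d(\delta_yP^x_{r,s})=\int\varphi\,d(\delta_yP^x_{r,t})$, using that $\mathbb{W}_1$-convergence implies weak convergence.)

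Given the previous step, the ergodic estimate \eqref{WErodicity} is immediate: by the Kantorovich--Rubinstein duality for $\mathbb{W}_1$ recalled at the beginning of this section and the first bound in \eqref{F3.2}, for any Lipschitz $\phi$,
\[
\Big|P^x_{s,t}\phi(y)-\int_{\RR^m}\phi(z)\,\mu^x_t(dz)\Big|\le\mathrm{Lip}(\phi)\,\mathbb{W}_1(\delta_yP^x_{s,t},\mu^x_t)\le C_1\,\mathrm{Lip}(\phi)(1+|x|+|y|)e^{-\beta(t-s)}.
\]
For uniqueness, let $\{\nu^x_t\}_{t\in\RR}$ be another evolution system of measures with $M_x:=\sup_{t\in\RR}\int_{\RR^m}|z|\,\nu^x_t(dz)<\infty$; fix $t\in\RR$ and a $1$-Lipschitz $\phi$, and put $\phi_N:=(-N)\vee(\phi\wedge N)\in C_b(\RR^m)$, still $1$-Lipschitz. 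Applying \eqref{ESM} to $\phi_N$ for the systems $\{\mu^x_t\}_{t\in\RR}$ and $\{\nu^x_t\}_{t\in\RR}$ between times $s\le t$ and subtracting, and inserting $\mu^x_t=\mu^x_sP^x_{s,t}$,
\[
\int\phi_N\,d\mu^x_t-\int\phi_N\,d\nu^x_t=\int P^x_{s,t}\phi_N\,d\mu^x_s-\int P^x_{s,t}\phi_N\,d\nu^x_s.
\]
By \eqref{FR1} the map $z\mapsto P^x_{s,t}\phi_N(z)$ is Lipschitz with constant at most $Ce^{-\beta(t-s)}$, so the right side is at most $Ce^{-\beta(t-s)}\mathbb{W}_1(\mu^x_s,\nu^x_s)\le Ce^{-\beta(t-s)}\big(C_1(1+|x|)+M_x\big)$, using the uniform first-moment bound in \eqref{F3.2} and the definition of $M_x$. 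Letting $s\to-\infty$ gives $\int\phi_N\,d\mu^x_t=\int\phi_N\,d\nu^x_t$ for every $N$, and $N\to\infty$ (dominated convergence, with dominant $|\phi(\textbf{0})|+|z|$, integrable against both $\mu^x_t$ and $\nu^x_t$ by \eqref{F3.2}) yields $\int\phi\,d\mu^x_t=\int\phi\,d\nu^x_t$; taking the supremum over $1$-Lipschitz $\phi$ gives $\mathbb{W}_1(\mu^x_t,\nu^x_t)=0$, i.e.\ $\nu^x_t=\mu^x_t$.

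I do not anticipate a real obstacle: the substance has already been expended in the asymptotic reflection coupling of Lemma~\ref{L3.1}(2) and the Cauchy argument of Lemma~\ref{Le3.3}. The only points calling for (routine) care are the measurable selection needed to glue couplings in the measure-level contraction estimate, and the truncation $\phi_N\to\phi$ together with the uniform-in-$s$ control of $\mathbb{W}_1(\mu^x_s,\nu^x_s)$ in the uniqueness part.
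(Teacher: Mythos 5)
Your proposal is correct; all three assertions do follow from Lemmas \ref{L3.1} and \ref{Le3.3} plus Kantorovich--Rubinstein duality, and your middle step (the bound \eqref{WErodicity}) is verbatim the paper's argument. The other two steps take mildly different routes. For the evolution-system property, the paper lets $s\to-\infty$ pointwise in the identity $P^x_{s,u}P^x_{u,t}\varphi(y)=P^x_{s,t}\varphi(y)$, using that $P^x_{u,t}\varphi\in C_b(\RR^m)$ (Feller, from \eqref{FR1}) and that $\mathbb{W}_1$-convergence of $\delta_yP^x_{s,u}$ to $\mu^x_u$ gives convergence of integrals of bounded continuous functions; this is exactly your parenthetical alternative. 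Your primary route -- upgrading \eqref{FR1} to a measure-level contraction and passing to the limit in Chapman--Kolmogorov at the level of measures -- yields the slightly stronger statement $\mu^x_sP^x_{s,t}=\mu^x_t$ for all bounded measurable test functions, and the measurable-selection point you flag is in fact avoidable: for a $1$-Lipschitz $f$, duality and \eqref{FR1} show $P^x_{s,t}f$ is $Ce^{-\beta(t-s)}$-Lipschitz, so $\bigl|\int P^x_{s,t}f\,d\mu_1-\int P^x_{s,t}f\,d\mu_2\bigr|\le Ce^{-\beta(t-s)}\mathbb{W}_1(\mu_1,\mu_2)$ without any gluing. For uniqueness, the paper is a bit leaner: it applies \eqref{WErodicity} directly under the integral $\int P^x_{s,t}\varphi(y)\,\nu^x_s(dy)$ for $\varphi\in C^1_b$, so only the $\nu$-system identity \eqref{ESM} and the uniform moment bound on $\nu^x_s$ are needed, with no truncation and no estimate on $\mathbb{W}_1(\mu^x_s,\nu^x_s)$; your version invokes \eqref{ESM} for both systems, controls $\mathbb{W}_1(\mu^x_s,\nu^x_s)$ via the two first-moment bounds, and removes the truncation by dominated convergence -- slightly heavier bookkeeping, but equally valid, and it has the small advantage of producing the conclusion directly as $\mathbb{W}_1(\mu^x_t,\nu^x_t)=0$.
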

\begin{proof} The proof is essentially taken from that of \cite[Propositions 2.3 and 2.5]{DR2006}, see also the proof of  \cite[Proposition 2.6]{SWX2024}. We present the proof here for the sake of the completeness.

(i) According to \eqref{F3.2}, for any $ \varphi \in C_b(\mathbb{R}^{m}) $,
\begin{align}
\lim_{s \to -\infty} P^x_{s,t}\varphi (y)= \int_{\mathbb{R}^{m}}\varphi (y)\mu^x_{t}(dy).\label{Pro1}
\end{align}
By \eqref{FR1}, $P^x_{t,u}\varphi\in C_b(\RR^m)$. Note that
$$P^x_{s,u} P^x_{u,t}\varphi (y) =P^{x}_{s,t} \varphi (y),\quad s\le  u\le  t.$$
Then, letting $s\to -\infty$ in both sides of the equality above, we obtain from \eref{Pro1} that
$$\int_{\mathbb{R}^{m}}P^x_{u,t}\varphi(y)\mu^x_u(dy)=\int_{\mathbb{R}^{m}}\varphi(y)\mu^x_t(dy).$$
Therefore,
$\{\mu^x_{t}\}_{t\in\RR}$ given in Lemma $\ref{Le3.3}$ is an evolution system of measures for $\{P^x_{s,t}\}_{t\ge s}$.

For any Lipschitz continuous function $\phi$ on $\RR^m$, by \eref{F3.2} again, \begin{align*}
\left|P^x_{s,t} \phi(y)\!-\!\int_{\RR^m}\phi(z)\,\mu^x_t(dz)\right|
\le   {\rm Lip}(\phi)\mathbb{W}_1(\delta_{y}P^x_{s,t}, \mu^x_t)
\le C {\rm Lip}(\phi)(1\!+\!|x|\!+\!|y|)e^{-\beta (t-s)}.
\end{align*}
Hence, \eref{WErodicity} holds.

(ii) Let $\{\nu^x_{t}\}_{t\in\RR}$ be another evolution system of measures for
$\{P^x_{s,t}\}_{t\ge s}$. Then
$$ \int_{\mathbb{R}^{m}}P^x_{s,t} \varphi (y)\nu^x _s(dy)=\int_{\mathbb{R}^{m}}\varphi (y)\nu^x_t(dy),\quad\varphi \in C_b^1(\mathbb{R}^{m}). $$
In order to prove the uniqueness, it is sufficient to verify
\begin{align}
\lim_{s \to -\infty} \int_{\mathbb{R}^{m}}P^x_{s,t} \varphi (y)\nu^x_s(dy)=\int_{\mathbb{R}^{m}}\varphi (y)\mu^x_t(dx).\label{SufC}
\end{align}
Then it follows that $\mu^x_t=\nu^x_t$ by the arbitrariness of $\varphi$. In fact, note that
$$\int_{\mathbb{R}^{m}}P^x_{s,t} \varphi (y)\nu^x_s(dy)=\int_{\mathbb{R}^{m}}\left[P^x_{s,t} \varphi (y)-\int_{\mathbb{R}^{m}}\varphi (z) \mu^x_t(dz)\right ]\nu^x_s(dy) +\int_{\mathbb{R}^{m}}\varphi (z)\mu^x_t(dz).$$
On the other hand, according to \eref{WErodicity},
\begin{align*}
 \left |\int_{\mathbb{R}^{m}}\left[P^x_{s,t} \varphi (y)-\int_{\mathbb{R}^{n}}\varphi (z) \mu^x_t(dz)\right ]\nu^x_s(dy) \right|
\le  &C\int_{\mathbb{R}^{m}}(1+|x|+|y|)\nu^x_s(dy)e^{-\beta(t-s)}\\
\le  &C(1+|x|)e^{-\beta (t-s)} .
\end{align*}
Hence, letting $s \to -\infty$, we get that \eref{SufC} holds. The proof is complete.
\end{proof}

\subsection{A priori estimates of $(X_t^\varepsilon,Y_t^\varepsilon)_{t\ge0}$}
Let $(X_t^\varepsilon,Y_t^\varepsilon)_{t\ge0}$ be the unique strong solution to the SDE \eref{Equation}.

\begin{lemma} \label{PMY}
Suppose that Assumptions {\rm\ref{A1}} and {\rm\ref{A2}} hold. For any $p\ge 4$ and  $T>0$, there exists a constant $C_{p,T}>0$ such that for any $\varepsilon>0$, $x\in\RR^{n}$ and $ y\in \RR^{m}$,
\begin{align}
\mathbb{E}\left(\sup_{t\in [0, T]}|X_{t}^{\varepsilon}|^p\right)\le  C_{p,T}(1+|x|^p+|y|^p)\label{X}
\end{align} and
\begin{align}
\sup_{t\in [0,T]}\mathbb{E}|Y_{t}^{\varepsilon}|^p\le  C_{p,T}(1+|x|^p+|y|^p).\label{Y}
\end{align}
Moreover, for any $h>0$ with $0\le  t<t+h\le  T$,
\begin{align}
\mathbb{E}\left(\sup_{t\le  s\le  t+h}|X_{s}^{\varepsilon}\!-\!X_{t}^{\varepsilon}|^{4}\right)
\le  C_{T}\left(1+|x|^{4}+|y|^{4}\right) h^{2}.\label{IncrE}
\end{align}
\end{lemma}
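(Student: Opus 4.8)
The plan is to prove the three estimates in order, bootstrapping the bound on $(Y^\varepsilon_t)$ into the bound on $(X^\varepsilon_t)$ and then into the increment estimate. The key difficulty is that the drift of $Y^\varepsilon$ carries the prefactor $\varepsilon^{-1}$, so naive Gronwall estimates blow up; the remedy is to exploit the dissipativity inequality \eqref{A23}, which survives the scaling because the bad constant multiplies $|Y^\varepsilon|^2$ with a negative sign.

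First I would establish \eqref{Y}. Applying It\^o's formula to $|Y^\varepsilon_t|^p$ (first for $p=2$, then general $p\ge 4$ exactly as in the proof of Lemma~\ref{L3.1}(1)) and using \eqref{A23} with the $\varepsilon^{-1}$ prefactor, one gets
\begin{align*}
\frac{d}{dt}\EE|Y^\varepsilon_t|^p \le -\frac{pK_1}{\varepsilon}\EE|Y^\varepsilon_t|^p + \frac{C_p}{\varepsilon}\EE\big[|Y^\varepsilon_t|^{p-2}(1+|X^\varepsilon_t|^2)\big].
\end{align*}
Here the point is that $X^\varepsilon$ does \emph{not} see the $\varepsilon^{-1}$, so one needs an a priori control of $\sup_{s\le T}\EE|X^\varepsilon_s|^p$ before closing this; I would therefore run a coupled Gronwall argument. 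From the slow equation, \eqref{ConA12} gives $|b(t/\varepsilon,X^\varepsilon_t,Y^\varepsilon_t)|\le C_0(1+|X^\varepsilon_t|+|Y^\varepsilon_t|)$ and $\|\sigma\|\le C_0(1+|X^\varepsilon_t|)$, so by the Burkholder--Davis--Gundy inequality
\begin{align*}
\EE\Big(\sup_{s\le t}|X^\varepsilon_s|^p\Big)\le C_{p,T}\Big(1+|x|^p+\int_0^t \EE\big(\sup_{r\le s}|X^\varepsilon_r|^p\big)\,ds + \int_0^t \EE|Y^\varepsilon_s|^p\,ds\Big).
\end{align*}
Feeding the $|X^\varepsilon|^2$ term on the right of the $Y$-inequality through Young's inequality as in \eqref{A23} (absorbing $\delta|Y^\varepsilon|^p$ into the dissipative term), then integrating, one obtains $\EE|Y^\varepsilon_t|^p\le e^{-pK_3 t/\varepsilon}|y|^p + C_p\int_0^t \varepsilon^{-1}e^{-pK_3(t-s)/\varepsilon}\big(1+\EE|X^\varepsilon_s|^p\big)\,ds$; since $\int_0^t \varepsilon^{-1}e^{-pK_3(t-s)/\varepsilon}\,ds\le (pK_3)^{-1}$ uniformly in $\varepsilon$, this yields $\sup_{s\le T}\EE|Y^\varepsilon_s|^p\le C_{p,T}(1+|y|^p+\sup_{s\le T}\EE|X^\varepsilon_s|^p)$. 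Plugging back into the $X$-inequality and applying Gronwall gives \eqref{X}, and then \eqref{Y} follows.

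For \eqref{IncrE}, I would simply write $X^\varepsilon_s-X^\varepsilon_t=\int_t^s b(r/\varepsilon,X^\varepsilon_r,Y^\varepsilon_r)\,dr+\int_t^s\sigma(r/\varepsilon,X^\varepsilon_r)\,dW^1_r$, take the supremum over $s\in[t,t+h]$, raise to the fourth power, and use H\"older on the drift term (picking up a factor $h^3\cdot h = h^4$, actually $h^4$, against $\int_t^{t+h}\EE|b|^4\,dr$) and BDG on the martingale term (picking up $\EE\big(\int_t^{t+h}\|\sigma\|^2\,dr\big)^2\le h\int_t^{t+h}\EE\|\sigma\|^4\,dr\le C h^2$). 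The moment bounds \eqref{X} and \eqref{Y} (together with the linear growth of $b$ and $\sigma$) bound both integrands uniformly on $[0,T]$, giving $\EE\big(\sup_{t\le s\le t+h}|X^\varepsilon_s-X^\varepsilon_t|^4\big)\le C_T(1+|x|^4+|y|^4)(h^4+h^2)\le C_T(1+|x|^4+|y|^4)h^2$ since $h\le T$.

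The main obstacle is the coupled nature of the first two estimates: one cannot bound $\EE|Y^\varepsilon|^p$ without already controlling $\EE|X^\varepsilon|^p$, and vice versa, while keeping all constants independent of $\varepsilon$. The key technical observation that makes this work is that the $\varepsilon^{-1}$ in front of the dissipative drift is precisely compensated by the fact that $\int_0^t \varepsilon^{-1}e^{-c(t-s)/\varepsilon}\,ds$ is bounded uniformly in $\varepsilon$; this is what allows the $\varepsilon$-dependence to cancel. One must be slightly careful to first derive the estimate for $p=2$ (needed to even make sense of the $|Y^\varepsilon_t|^{p-2}$ terms via Young's inequality for general $p$) and then iterate, exactly as in the proof of Lemma~\ref{L3.1}(1).
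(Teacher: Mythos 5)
Your argument is correct and matches the paper's proof in all essentials: the moment bounds are closed by the same coupled Gronwall scheme (the dissipativity \eqref{A23} beating the $\varepsilon^{-1}$ scaling so that $\EE|Y^{\varepsilon}_t|^p$ is controlled, uniformly in $\varepsilon$, by $|y|^p$ plus $\sup_{s\le t}\EE|X^{\varepsilon}_s|^p$, then BDG plus Gronwall for the slow component), and the increment estimate is the same H\"older/BDG computation exploiting that $\|\sigma(t,x,y)\|\le C_0(1+|x|)$ is independent of $y$. The only cosmetic difference is that you apply It\^o's formula directly to $|Y^{\varepsilon}_t|^p$ and use the kernel bound $\int_0^t\varepsilon^{-1}e^{-c(t-s)/\varepsilon}\,ds\le c^{-1}$, whereas the paper reaches the same intermediate estimate \eqref{EY} via the time change $t\mapsto t\varepsilon$, identifying the law of $(Y^{\varepsilon}_t)_{t\ge0}$ with that of $(Y^{0,\tilde X^{\varepsilon},y}_{t/\varepsilon})_{t\ge0}$ and invoking the frozen-SDE bound of Lemma \ref{L3.1}(1); also, no $p=2$ bootstrap is needed, since Young's inequality handles the $|Y^{\varepsilon}_t|^{p-2}$ factor directly for any $p\ge4$.
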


\begin{proof}
(i) Note that
\begin{align*}
Y^{\varepsilon}_{t\varepsilon}= y+\frac{1}{\varepsilon}\int^{t\varepsilon}_0 f(r/\varepsilon,X^{\varepsilon}_{r},Y^{\varepsilon}_r)dr+\frac{1}{\sqrt{\varepsilon}}\int^{t\varepsilon}_0 dW^2_r
=y+\int^{t}_0 f(r,X^{\varepsilon}_{r\varepsilon},Y^{\varepsilon}_{r\varepsilon})dr+\int^{t}_0 d\hat W^2_r,
\end{align*}
where $\hat{W}^2_t:=\frac{1}{\sqrt{\varepsilon}}W^2_{t\varepsilon}$. Denote $\tilde{X}^{\varepsilon}_t:=X^{\varepsilon}_{t\varepsilon}$. Then, by the construction of $(Y^{0,\tilde{X}^{\varepsilon},y}_t)_{t\ge 0}$ and the weak uniqueness of the SDE \eref{GenFRZ}, it is easy to see that $(Y^{\varepsilon}_{t\varepsilon})_{t\ge 0}$ and $(Y^{0,\tilde{X}^{\varepsilon}, y}_{t})_{t\ge 0}$ have the same law.  As a consequence, $(Y^{\varepsilon}_{t})_{t\ge 0}$ has the same law as $(Y^{0,\tilde{X}^{\varepsilon}, y}_{t/\varepsilon})_{t\ge 0}$.
Then, following the proof of \eref{uniformEY}, we have for any $p\ge  4$,
\begin{align}
\EE |Y^{\varepsilon}_t|^p\!=\!\EE|Y^{0,\tilde{X}^{\varepsilon},y}_{t/\varepsilon}|^p\!\le\!  C\left(1\!+\!\sup_{s\in[0, t/\varepsilon]}\EE|\tilde{X}^{\varepsilon}_s|^p\!+\!|y|^p\right)\!\le\!  C\left(1\!+\!\sup_{s\in[0,t]}\EE|X^{\varepsilon}_s|^p\!+\!|y|^p\right).\label{EY}
\end{align}

Using Burkholder-Davis-Gundy's inequality, \eref{ConA12} and \eref{EY}, we get for any $t\in [0,T]$,
\begin{align*}
\mathbb{E}\left(\sup_{s\in [0,t]}|X^{\varepsilon}_t|^p\right)\le  &C_{p,T}(1+|x|^p)+C_{p,T}\int^{t}_{0}\EE|X^{\varepsilon}_s|^p ds+C_{p,T}\int^{t}_{0}\EE|Y^{\varepsilon}_s|^p ds\\
\le  &C_{p,T}(1+|x|^p+|y|^p)+C_{p,T}\int^{t}_{0}\EE\left(\sup_{r\in [0,s]}|X^{\varepsilon}_r|^p \right)ds,
\end{align*}
which along with Gronwall's inequality  yields
\begin{align*}
\mathbb{E}\left(\sup_{s\in [0,t]}|X^{\varepsilon}_t|^p\right)\le  C_{p,T}(1+|x|^p+|y|^p),\quad t\in [0,T].
\end{align*}
Thus, \eref{X} holds. Combining this with \eqref{EY}, we verify \eref{Y}.

(ii) For any $h>0$ with $0\le  t<t+h\le  T$, we have
\begin{align*}
\mathbb{E} \left(\sup_{t\le  s\le  t+h}|X_{s}^{\varepsilon}\!-\!X_{t}^{\varepsilon}|^{4}\right)
\le &C\EE\Big[\sup_{t\le  s\le  t+h}\!\Big|\int_t^{s}b(r,X_{r}^{\varepsilon},Y^{\varepsilon}_{r})dr\Big|^4\Big]\!+\!C\EE\Big[\sup_{t\le  s\le  t+h}\Big|\!\int_t^{s}\sigma(r,X_{r}^{\varepsilon},Y^{\varepsilon}_{r})dW^1_r\Big|^4\Big]\nonumber\\
\le &C h^4\sup_{r\in[0,T]}\mathbb{E}|b(r,X_{r}^{\varepsilon},Y^{\varepsilon}_{r})|^{4}
+C\mathbb{E}\Big(\int_t^{t+h} \|\sigma(r,X_{r}^{\varepsilon},Y^{\varepsilon}_{r})\|^2 dr\Big)^{2}\nonumber\\
\le &C_{T} h^{4}\big(1+ \sup_{r\in[0,T]}\EE|X_{r}^{\varepsilon}|^{4}+\sup_{r\in[0,T]}\EE|Y_{r}^{\varepsilon}|^{4}\big)+C_T h^{2}\big(1+ \sup_{r\in[0,T]}\EE|X_{r}^{\varepsilon}|^{4}\big)\nonumber\\
\le & C_{T}\left(1+|x|^{4}+|y|^{4}\right) h^{2},
\end{align*}
where in the third inequality we used \eqref{ConA12}. The proof of \eref{IncrE} is complete.
\end{proof}

\subsection{An auxiliary process of the fast component}

Inspired by the proof of \cite[Theorem 1]{K1968},
we divide $[0,T)$ into intervals with size $\delta$, where $\delta$ depends on $\varepsilon$ and will be fixed later. Fix each time interval $[k\delta,(k+1)\delta\wedge T)$, where $k=0,1,\ldots, [T/\delta]$, and $[T/\delta]$ is the integer part of $T/\delta$. We construct an auxiliary process $\hat{Y}_{t}^{\varepsilon}$ as follows:
\begin{align*}
\hat{Y}_{t}^{\varepsilon}=Y_{k\delta}^{\varepsilon}+\frac{1}{\varepsilon}\int_{k\delta}^{t}
f(s/\varepsilon, X_{k\delta}^{\varepsilon},\hat{Y}_{s}^{\varepsilon})ds+\frac{1}{\sqrt{\varepsilon}}\int_{k\delta}^{t}dW^{2}_s,\quad t\in [k\delta,[(k+1)\delta]\wedge T).
\end{align*}

The construction of $(\hat{Y}_{t}^{\varepsilon})_{t\ge0}$ and the proof of Lemma \ref{PMY} yield the following estimate.
\begin{lemma} \label{MDY}  Suppose that Assumption {\rm\ref{A2}} holds.
For any $p\ge 4$ and $T>0$, there exists a constant $C_{p,T}>0$ such that for any $\varepsilon>0$, $x\in\RR^{n}$ and $ y\in \RR^{m}$,
\begin{align*}
\sup_{t\in [0, T]}\mathbb{E}|\hat Y_{t}^{\varepsilon}|^{p}\le  C_{p,T}(1+|x|^{p}+|y|^{p}).
\end{align*}
\end{lemma}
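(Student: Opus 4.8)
The plan is to observe that the auxiliary process is, on each deterministic block $[k\delta,(k+1)\delta\wedge T)$, a time-accelerated copy of the frozen SDE \eqref{FrozenE} with the slow variable frozen at the $\mathscr{F}_{k\delta}$-measurable value $X^\varepsilon_{k\delta}$, and then to import the moment bound \eqref{uniformEY} of Lemma \ref{L3.1}(1) together with the a priori estimates \eqref{X}--\eqref{Y} of Lemma \ref{PMY}. Concretely, I would first perform the change of variables $\hat Z^\varepsilon_u:=\hat Y^\varepsilon_{u\varepsilon}$ and $\hat W^2_r:=\varepsilon^{-1/2}W^2_{r\varepsilon}$; substituting $s=r\varepsilon$ in the definition of $\hat Y^\varepsilon$ shows that, for $u\in[k\delta/\varepsilon,\,((k+1)\delta\wedge T)/\varepsilon)$,
$$\hat Z^\varepsilon_u=Y^\varepsilon_{k\delta}+\int_{k\delta/\varepsilon}^{u}f\bigl(r,X^\varepsilon_{k\delta},\hat Z^\varepsilon_r\bigr)\,dr+\hat W^2_u-\hat W^2_{k\delta/\varepsilon},$$
where $(\hat W^2_r)$ is a standard $m$-dimensional Brownian motion whose increments after time $k\delta/\varepsilon$ are independent of $\mathscr{F}_{k\delta}$, while $X^\varepsilon_{k\delta}$ and $Y^\varepsilon_{k\delta}$ are $\mathscr{F}_{k\delta}$-measurable. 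Thus $(\hat Z^\varepsilon_u)$ solves the frozen SDE \eqref{FrozenE} started at time $k\delta/\varepsilon$ from $Y^\varepsilon_{k\delta}$ with parameter $X^\varepsilon_{k\delta}$, and $\hat Y^\varepsilon_t=\hat Z^\varepsilon_{t/\varepsilon}$.

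The key steps are then the following. Conditioning on $\mathscr{F}_{k\delta}$ and using weak uniqueness for \eqref{FrozenE}, the conditional law of $(\hat Y^\varepsilon_t)_{t\in[k\delta,(k+1)\delta\wedge T)}$ equals, for $\PP$-a.e.\ $\omega$, the law of $\bigl(Y^{k\delta/\varepsilon,x,y}_{t/\varepsilon}\bigr)$ at the deterministic parameters $x=X^\varepsilon_{k\delta}(\omega)$, $y=Y^\varepsilon_{k\delta}(\omega)$; applying \eqref{uniformEY} (with $s=k\delta/\varepsilon$ and $t$ replaced by $t/\varepsilon$) gives, for $t\in[k\delta,(k+1)\delta\wedge T)$,
$$\EE\bigl[|\hat Y^\varepsilon_t|^{p}\,\big|\,\mathscr{F}_{k\delta}\bigr]\le e^{-C_{1,p}(t-k\delta)/\varepsilon}|Y^\varepsilon_{k\delta}|^{p}+C_{2,p}\bigl(1+|X^\varepsilon_{k\delta}|^{p}\bigr).$$
Taking expectations, bounding the exponential factor by $1$, and invoking \eqref{X} and \eqref{Y} to control $\EE|Y^\varepsilon_{k\delta}|^p$ and $\EE|X^\varepsilon_{k\delta}|^p$ by $C_{p,T}(1+|x|^p+|y|^p)$ uniformly in $k\in\{0,1,\dots,[T/\delta]\}$ (and uniformly in $\varepsilon$, since all of $C_{1,p},C_{2,p},C_{p,T}$ are $\varepsilon$-independent), one gets $\EE|\hat Y^\varepsilon_t|^p\le C_{p,T}(1+|x|^p+|y|^p)$ for every $t\in[0,T)$, and the case $t=T$ follows by continuity.

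I do not expect any genuine obstacle: the statement is flagged in the text as a direct consequence of the construction of $\hat Y^\varepsilon$ together with the proof of Lemma \ref{PMY}. The only point demanding a little care is the conditioning step, namely the justification that under $\EE[\,\cdot\mid\mathscr{F}_{k\delta}]$ one may treat the random frozen variable $X^\varepsilon_{k\delta}$ as a constant when appealing to Lemma \ref{L3.1}(1); this rests on the independence of the post-$k\delta$ increments of $W^2$ from $\mathscr{F}_{k\delta}$ and on weak uniqueness for \eqref{FrozenE}. If one prefers to avoid conditioning entirely, an equivalent route is to redo on each block the It\^o-formula-plus-Young-inequality computation used to prove \eqref{uniformEY}: applying \eqref{A23} with slow variable $X^\varepsilon_{k\delta}$ yields $\frac{d}{dt}\EE|\hat Y^\varepsilon_t|^p\le\varepsilon^{-1}\bigl(-pK_3\,\EE|\hat Y^\varepsilon_t|^p+C_p(1+\EE|X^\varepsilon_{k\delta}|^p)\bigr)$ on $[k\delta,(k+1)\delta\wedge T)$, and the comparison theorem followed by Lemma \ref{PMY} gives the same bound, uniformly in $k$ and $\varepsilon$.
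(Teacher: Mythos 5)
Your proposal is correct and is essentially the argument the paper intends: the paper gives no explicit proof, stating only that the estimate follows from the construction of $\hat Y^{\varepsilon}$ and the proof of Lemma \ref{PMY}, which is exactly your blockwise identification of $\hat Y^{\varepsilon}$ with a (time-rescaled) frozen SDE with slow variable frozen at $X^{\varepsilon}_{k\delta}$, followed by the \eqref{uniformEY}-type moment bound (either via conditioning on $\mathscr{F}_{k\delta}$ or, as in your alternative route, by redoing the It\^o/Young/comparison computation with the random but $\mathscr{F}_{k\delta}$-measurable frozen value) and then Lemma \ref{PMY} to control $\mathbb{E}|X^{\varepsilon}_{k\delta}|^{p}$ and $\mathbb{E}|Y^{\varepsilon}_{k\delta}|^{p}$. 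The only cosmetic remark is that, like the paper's own citation of Lemma \ref{PMY}, your bound implicitly uses Assumption \ref{A1} as well, even though the lemma's statement lists only \ref{A2}.
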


Next, we turn to evaluate the difference between $(Y_{t}^{\varepsilon})_{t\ge0}$ and $(\hat{Y}_{t}^{\varepsilon})_{t\ge0}$. We shall mention that the partially dissipative condition of the coefficient $f(t,x,y)$ prevents us from demonstrating an estimate like $\EE|Y_{t}^{\varepsilon}-\hat{Y}_{t}^{\varepsilon}|^2$ as done in the uniformly dissipative setting; see e.g. \cite[Lemma 3.4]{LRSX2020}. The following statement plays an important role in the proofs of our main results.

\begin{lemma} \label{DEY}  Suppose that Assumptions {\rm\ref{A1}} and {\rm\ref{A2}} hold.  Let $F:\RR_{+}\times \RR^n\times\RR^m\rightarrow \RR$ satisfy that for any $t\ge0$, $x_1,x_2\in \RR^n$ and $y_1,y_2\in \RR^m$,
$$
|F(t,x_1,y_1)-F(t,x_2,y_2)|\le  C(|x_1-x_2|+|y_1-y_2|).
$$
Then, for any $T>0$, there is a constant $C_T>0$ such that for any $0\le  t_0<t\le  T$ and $Z_{t_0}\in\mathscr{F}_{t_0}$ with $\EE|Z_{t_0}|^2<\infty$,
\begin{align*}
\left|\EE\left[\int^t_{t_0}\left(F(s/\varepsilon,X^{\varepsilon}_s, Y^{\varepsilon}_s) -F(s/\varepsilon,X^{\varepsilon}_s, \hat Y^{\varepsilon}_s)\right) Z_{t_0}ds\right]\right|
\le  C_{T}(1+|x|+|y|) \left(\EE|Z_{t_0}|^2\right)^{1/2}\delta^{1/2}.
\end{align*}
\end{lemma}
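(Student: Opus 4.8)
The plan is to control the time integral of $F(s/\varepsilon,X^\varepsilon_s,Y^\varepsilon_s)-F(s/\varepsilon,X^\varepsilon_s,\hat Y^\varepsilon_s)$ interval by interval, using the Lipschitz property of $F$ to reduce everything to the quantity $\EE\big[|Y^\varepsilon_s-\hat Y^\varepsilon_s|\,Z_{t_0}\big]$, and then to estimate this in the weak (Wasserstein) sense via the coupling estimate \eqref{REGenFRZ}, since the partially dissipative condition forbids a direct $L^2$-bound as noted before the statement. First I would fix the grid $\{k\delta\}$ and write, by the triangle inequality and $|F(s/\varepsilon,x,y_1)-F(s/\varepsilon,x,y_2)|\le C|y_1-y_2|$,
\begin{align*}
\left|\EE\!\left[\int^t_{t_0}\!\big(F(s/\varepsilon,X^\varepsilon_s,Y^\varepsilon_s)-F(s/\varepsilon,X^\varepsilon_s,\hat Y^\varepsilon_s)\big)Z_{t_0}\,ds\right]\right|
\le C\int^t_{t_0}\EE\big[|Y^\varepsilon_s-\hat Y^\varepsilon_s|\,|Z_{t_0}|\big]\,ds.
\end{align*}
Since this cannot be closed via a pointwise bound on $\EE|Y^\varepsilon_s-\hat Y^\varepsilon_s|$, I would instead keep the factor $Z_{t_0}$ and exploit that, on each interval $[k\delta,(k+1)\delta\wedge T)$ with $k\delta\ge t_0$, both $(Y^\varepsilon_s)$ and $(\hat Y^\varepsilon_s)$ solve a frozen-type SDE of the form \eqref{GenFRZ} (after the time-change $s\mapsto s/\varepsilon$ and using $\hat W^2_t=\varepsilon^{-1/2}W^2_{t\varepsilon}$): namely $Y^\varepsilon$ uses the genuine slow input $X^\varepsilon_r$ while $\hat Y^\varepsilon$ uses the frozen input $X^\varepsilon_{k\delta}$, with the same driving noise and the same starting value $Y^\varepsilon_{k\delta}$ at $s=k\delta$.

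The key step is then the application of the general coupling estimate \eqref{REGenFRZ}: with $V_1=V_2=Y^\varepsilon_{k\delta}$, $U_1(\cdot)=X^\varepsilon_{\cdot\varepsilon}$ and $U_2(\cdot)\equiv X^\varepsilon_{k\delta}$, it gives, after undoing the time change,
$$
\EE\big|Y^\varepsilon_s-\hat Y^\varepsilon_s\big|\le C\sup_{k\delta\le u\le s}\EE\big|X^\varepsilon_u-X^\varepsilon_{k\delta}\big|\le C\big(\EE\sup_{k\delta\le u\le(k+1)\delta}|X^\varepsilon_u-X^\varepsilon_{k\delta}|^4\big)^{1/4},
$$
and the increment estimate \eqref{IncrE} from Lemma~\ref{PMY} bounds the right-hand side by $C_T(1+|x|+|y|)\delta^{1/2}$. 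To bring back the weight $Z_{t_0}$: since $Z_{t_0}\in\mathscr F_{t_0}\subset\mathscr F_{k\delta}$ and the coupling construction on $[k\delta,\cdot)$ can be carried out conditionally on $\mathscr F_{k\delta}$ (the Brownian increments of $W^2$ after $k\delta$ being independent of $\mathscr F_{k\delta}$), I would apply the conditional version of \eqref{REGenFRZ} to get $\EE\big[|Y^\varepsilon_s-\hat Y^\varepsilon_s|\,\big|\,\mathscr F_{k\delta}\big]\le C\sup_{k\delta\le u\le s}\EE\big[|X^\varepsilon_u-X^\varepsilon_{k\delta}|\,\big|\,\mathscr F_{k\delta}\big]$, multiply by $|Z_{t_0}|$, take expectations, and use Cauchy--Schwarz together with \eqref{IncrE} to obtain, on each subinterval, a bound of order $(\EE|Z_{t_0}|^2)^{1/2}(1+|x|+|y|)\,\delta^{1/2}$ times the length $\delta$ of the interval.

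Summing over the $O(T/\delta)$ intervals and integrating over $s$ (each interval contributes a factor $\delta$ from the length of integration, times the $\delta^{1/2}$ spatial increment bound, times $T/\delta$ intervals) produces the claimed $C_T(1+|x|+|y|)(\EE|Z_{t_0}|^2)^{1/2}\delta^{1/2}$; the first and last subintervals, where $t_0$ or $t$ falls strictly inside, are handled the same way with possibly shorter integration ranges. The main obstacle I anticipate is the careful handling of the weight $Z_{t_0}$ inside the expectation: one must either (i) re-run the proof of \eqref{REGenFRZ}/Lemma~\ref{L3.1}(2) in its conditional form with respect to $\mathscr F_{k\delta}$, checking that the asymptotic reflection coupling and the comparison argument go through $\omega$-wise, or (ii) argue via a regular conditional distribution / freezing argument that reduces to the already-proven unconditional estimate. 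A secondary technical point is matching the time-scaling: the frozen SDE in Section~2 is written in the original (non-accelerated) time, whereas $Y^\varepsilon,\hat Y^\varepsilon$ live on scale $\varepsilon$, so one must track that the exponential factor $e^{-\beta(t-s)/\varepsilon}$ only helps and that the constants in \eqref{IncrE} absorb the rest; this is routine once the bookkeeping is set up.
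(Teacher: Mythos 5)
There is a genuine gap at the core of your argument. Your very first reduction, bounding the quantity by $C\int_{t_0}^t\EE\big[|Y^{\varepsilon}_s-\hat Y^{\varepsilon}_s|\,|Z_{t_0}|\big]\,ds$, and the subsequent key step $\EE\big[|Y^{\varepsilon}_s-\hat Y^{\varepsilon}_s|\,\big|\,\mathscr F_{k\delta}\big]\le C\sup_{k\delta\le u\le s}\EE\big[|X^{\varepsilon}_u-X^{\varepsilon}_{k\delta}|\,\big|\,\mathscr F_{k\delta}\big]$, are not justified by \eqref{REGenFRZ} and in fact cannot be: \eqref{REGenFRZ} (like Lemma \ref{L3.1}(2)) controls only the $L^1$-Wasserstein distance between the \emph{laws} of the two solutions, i.e.\ it is obtained from a reflection-type coupling that is different from the actual joint construction of $(Y^{\varepsilon},\hat Y^{\varepsilon})$, which are driven by the \emph{same} Brownian motion $W^2$. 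The inequality between $\mathbb W_1$ and the synchronous-coupling moment goes the wrong way ($\mathbb W_1\le \EE|Y^{\varepsilon}_s-\hat Y^{\varepsilon}_s|$, not the reverse), and under the merely partially dissipative condition \eqref{A21} the synchronous coupling does not contract — this is exactly the obstruction the paper points out before the lemma ("the partially dissipative condition \dots prevents us from demonstrating an estimate like $\EE|Y^{\varepsilon}_t-\hat Y^{\varepsilon}_t|^2$"), and it applies equally to the first moment, conditional or not. Running a "conditional version" of \eqref{REGenFRZ}, as you suggest, would again only give a bound on the conditional Wasserstein distance of the conditional laws, not on $\EE\big[|Y^{\varepsilon}_s-\hat Y^{\varepsilon}_s|\mid\mathscr F_{k\delta}\big]$. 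Taking the absolute value of the integrand at the outset forecloses the only available (distributional) comparison; the whole point of the lemma being a weighted \emph{signed} expectation is that one must avoid precisely this step.

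The paper's proof circumvents this as follows: keep the signed difference $F(s/\varepsilon,\cdot,Y^{\varepsilon}_s)-F(s/\varepsilon,\cdot,\hat Y^{\varepsilon}_s)$ inside the expectation, first replace the $x$-argument $X^{\varepsilon}_s$ by the frozen $X^{\varepsilon}_{s(\delta)}$ (cost $C_T(1+|x|+|y|)(\EE|Z_{t_0}|^2)^{1/2}\delta^{1/2}$ via the Lipschitz property in $x$, Cauchy--Schwarz and \eqref{IncrE}), then condition on $\mathscr F_{k\delta}$: since $Z_{t_0}$, $X^{\varepsilon}_{k\delta}$, $Y^{\varepsilon}_{k\delta}$ are $\mathscr F_{k\delta}$-measurable and the processes restarted at $k\delta$ from deterministic data are independent of $\mathscr F_{k\delta}$, the inner quantity becomes a \emph{difference of expectations} $\big|\EE F(s/\varepsilon,x',Y^{\varepsilon,k\delta,x',y'}_s)-\EE F(s/\varepsilon,x',\tilde Y^{\varepsilon,k\delta,x',y'}_s)\big|$ of a Lipschitz function, which is legitimately bounded by $C\,\mathbb W_1$ of the two laws; after the time change identifying these laws with those of the processes in \eqref{GenFRZ}, estimate \eqref{REGenFRZ} and then \eqref{IncrE} give the $\delta^{1/2}$ bound. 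Note that the freezing of the $x$-argument, which your plan omits, is not optional: without it the $x$-argument of $F$ is not $\mathscr F_{k\delta}$-measurable and the conditional law-comparison with deterministic $x'$ cannot be set up. Your interval decomposition, the use of \eqref{IncrE}, and the Cauchy--Schwarz step with $Z_{t_0}$ are all in the right spirit, but the central inequality you rely on is false as stated, so the proof does not go through without being restructured along the lines above.
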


\begin{proof}
For any $\varepsilon, s>0$, $\mathscr{F}_s$-measurable $\RR^n$-valued random variable $X$ and $\RR^m$-valued random variable $Y$, consider the following SDE
\begin{align*}
\tilde{Y}^{\varepsilon,s,X,Y}_t=Y+\frac{1}{\varepsilon}\int^t_s f(r/\varepsilon,X,\tilde{Y}^{\varepsilon,s,X,Y}_r)dr+\frac{1}{\sqrt{\varepsilon}}\int^t_s dW^2_r,\quad t\ge s.
\end{align*}
Then, for $k=0,1,\ldots, [T/\delta]$ and $s\in[k\delta,(k+1)\delta]$,
$\hat{Y}_{s}^{\varepsilon}=\tilde Y^{\varepsilon,k\delta,X_{k\delta}^{\varepsilon},Y_{k\delta}^{\varepsilon}}_s.$
On the other hand, let $(X^{\varepsilon,s, X',Y'}_{t}, Y^{\varepsilon,s, X',Y'}_{t})_{t\ge s}$ be the solution of the SDE
\begin{equation}\left\{\begin{array}{l}\label{InH-Equation}
\displaystyle
d X^{\varepsilon,s, X',Y'}_{t} = b(X^{\varepsilon,s, X',Y'}_{t}, Y^{\varepsilon,s, X',Y'}_{t})dt+\sigma(X^{\varepsilon,s, X',Y'}_{t})d W^1_t,\vspace{2mm}\\
\displaystyle d Y^{\varepsilon,s, X',Y'}_{t} =\varepsilon^{-1}f(t/\varepsilon,X^{\varepsilon,s, X',Y'}_{t},Y^{\varepsilon,s, X',Y'}_{t})dt+\varepsilon^{-1/2}d W^{2}_t
\end{array}\right.
\end{equation}
with $X^{\varepsilon,s, X',Y'}_{s}=X'\in\mathscr{F}_s$ and $Y^{\varepsilon,s, X',Y'}_{s}=Y'\in\mathscr{F}_s$. Then, for $k=0,1,\ldots, [T/\delta]$ and $s\ge  k\delta$,
\begin{equation}\label{e:addsde}
X_{s}^{\varepsilon}=X^{\varepsilon,k\delta,X_{k\delta}^{\varepsilon},Y_{k\delta}^{\varepsilon}}_s,\quad Y_{s}^{\varepsilon}=Y^{\varepsilon,k\delta,X_{k\delta}^{\varepsilon},Y_{k\delta}^{\varepsilon}}_s.
\end{equation}

Note that $X_{k\delta}^{\varepsilon}$ and $Y_{k\delta}^{\varepsilon}$  are $\mathscr{F}_{k\delta}$-measurable, and that $(\tilde{Y}^{\varepsilon, k\delta, x', y'}_{s})_{s\ge k\delta}$ and $(Y^{\varepsilon, k\delta, x', y'}_{s})_{s\ge k\delta}$ are independent of $\mathscr{F}_{k\delta}$ for any $(x',y')\in\RR^n\times\RR^m$. Then,
\begin{align*}
\EE&\left[\int^t_{t_0}\left(F(s/\varepsilon,X^{\varepsilon}_{s(\delta)}, Y^{\varepsilon}_s) -F(s/\varepsilon,X^{\varepsilon}_{s(\delta)}, \hat Y^{\varepsilon}_s)\right) Z_{t_0}ds\right]\\
 =&\sum^{[T/\delta]-1}_{k=0}\int^{[((k+1)\delta)\vee t_0]\wedge t}_{[(k\delta)\vee t_0]\wedge t}\EE\left[\left(F(s/\varepsilon,X^{\varepsilon}_{k\delta}, Y_s^{\varepsilon,k\delta,X_{k\delta}^{\varepsilon},Y_{k\delta}^{\varepsilon}}) -F(s/\varepsilon,X^{\varepsilon}_{k\delta}, \tilde Y^{\varepsilon,k\delta,X_{k\delta}^{\varepsilon},Y_{k\delta}^{\varepsilon}}_s)\right) Z_{t_0}\right]ds\\
=&\sum^{[T/\delta]-1}_{k=0}\int^{[((k+1)\delta)\vee t_0]\wedge t}_{[(k\delta)\vee t_0]\wedge t}\EE\Big[Z_{t_0}\EE\left(F(s/\varepsilon,x', Y^{\varepsilon,k\delta,x',y'}_s) -F(s/\varepsilon,x', \tilde Y^{\varepsilon,k\delta,x',y'}_s)\right)\Big|_{x'=X^{\varepsilon}_{k\delta}, y'=Y^{\varepsilon}_{k\delta}} \Big]ds,
\end{align*}
where $s(\delta)=[{s}/{\delta}]\delta$.

Furthermore, according to \eref{InH-Equation},
\begin{equation}\label{E2.12.1}\begin{split}
Y^{\varepsilon,k\delta,x',y'}_{\varepsilon s}=&y'+\frac{1}{\varepsilon}\int^{\varepsilon s}_{k\delta} f(r/\varepsilon,X^{\varepsilon,k\delta,x',y'}_r,Y^{\varepsilon,k\delta,x',y'}_{r})dr+\frac{1}{\sqrt{\varepsilon}}\int^{\varepsilon s}_{k\delta} dW^{2}_r\\
=&y'+\int^{s}_{\frac{k\delta}{\varepsilon}} f(r, X^{\varepsilon,k\delta,x',y'}_{r\varepsilon},Y^{\varepsilon,k\delta,x',y'}_{r\varepsilon})dr+\int^{s}_{\frac{k\delta}{\varepsilon}} d\hat{W}^{2}_r.\end{split}
\end{equation}
Denote $\tilde X^{\varepsilon,k\delta, x',y'}_r:=X^{\varepsilon,k\delta,x',y'}_{r\varepsilon}$. Recall that  $(Y_{s}^{\frac{k\delta}{\varepsilon},\tilde X^{\varepsilon,k\delta, x',y'}, y'})_{s\ge k\delta/\varepsilon}$ is the unique solution to the SDE \eqref{GenFRZ}; that is,
\begin{align}
Y_{s}^{\frac{k\delta}{\varepsilon},\tilde X^{\varepsilon,k\delta, x',y'}, y'}=& y'
+\int_{\frac{k\delta}{\varepsilon}}^{s}f(r, \tilde X^{\varepsilon,k\delta, x',y'}_r, Y_{r}^{\frac{k\delta}{\varepsilon}, \tilde X^{\varepsilon,k\delta, x',y'},y'})dr+\int_{\frac{k\delta}{\varepsilon}}^{s}d\hat{W}^2_r.  \label{E2.12.2}
\end{align}
The uniqueness of the weak solutions to the SDEs (\ref{E2.12.1}) and (\ref{E2.12.2}) implies
that the distribution of $\big(Y^{\varepsilon, k\delta, x',y'}_{s\varepsilon}\big)_{\frac{k\delta}{\varepsilon}\le  s\le  \frac{(k+1)\delta}{\varepsilon}}$
coincides with that of $\big(Y_{s}^{\frac{k\delta}{\varepsilon},\tilde X^{\varepsilon,k\delta, x',y'}, y'}\big)_{\frac{k\delta}{\varepsilon}\le  s\le  \frac{(k+1)\delta}{\varepsilon}}$. Similarly, the distribution of $\big(\tilde Y^{\varepsilon, k\delta, x',y'}_{s\varepsilon}\big)_{\frac{k\delta}{\varepsilon}\le  s\le  \frac{(k+1)\delta}{\varepsilon}}$
coincides with that of $\big(Y_{s}^{\frac{k\delta}{\varepsilon},x', y'}\big)_{\frac{k\delta}{\varepsilon}\le  s\le  \frac{(k+1)\delta}{\varepsilon}}$.
As a consequence, by the Lipschitz continuity of $F(x',\cdot)$ and \eref{REGenFRZ}, we have
\begin{align*}
&\left|\EE F(s/\varepsilon,x', Y^{\varepsilon,k\delta,x',y'}_s) -\EE F(s/\varepsilon,x', \tilde Y^{\varepsilon,k\delta,x',y'}_s)\right| \\
 &=\Big|\EE F(s/\varepsilon,x', Y^{\frac{k\delta}{\varepsilon},\tilde X^{\varepsilon,k\delta, x',y'},y'}_{s/\varepsilon}) -\EE F(s/\varepsilon,x', Y^{\frac{k\delta}{\varepsilon},x',y'}_{s/\varepsilon})\Big| \\
& \le C\mathbb{W}_1\!\Big(\mathscr{L}_{Y^{\frac{k\delta}{\varepsilon},\tilde X^{\varepsilon,k\delta, x',y'},y'}_{s/\varepsilon}}, \mathscr{L}_{ Y^{\frac{k\delta}{\varepsilon},x',y'}_{s/\varepsilon}}\Big) \\
& \le C\sup_{k\delta\le  r\le  s}\EE|X^{\varepsilon,k\delta, x',y'}_{r}-x'|.
\end{align*}

Hence, according to \eref{IncrE}, we get
\begin{equation}\label{F3.4}\begin{split}
&\left|\EE\left[\int^t_{t_0}\left(F(s/\varepsilon,X^{\varepsilon}_{s(\delta)}, Y^{\varepsilon}_s) -F(s/\varepsilon,X^{\varepsilon}_{s(\delta)}, \hat Y^{\varepsilon}_s)\right) Z_{t_0}ds\right]\right|\\
&\le C_T\sum^{[T/\delta]-1}_{k=0}\int^{[((k+1)\delta)\vee t_0]\wedge t}_{[(k\delta)\vee t_0]\wedge t}\left|\EE\Big[Z_{t_0}\sup_{k\delta\le  r\le  s}
|X^{\varepsilon}_{r}-X^{\varepsilon}_{k\delta}|\Big]\right|ds\\
& \le C_T\sum^{[T/\delta]-1}_{k=0}\int^{[((k+1)\delta)\vee t_0]\wedge t}_{[(k\delta)\vee t_0]\wedge t}\left(\EE|Z_{t_0}|^2\right)^{1/2}\left[\EE\left(\sup_{k\delta\le  r\le  s}
|X^{\varepsilon}_{r}-X^{\varepsilon}_{k\delta}|^2\right)\right]^{1/2}ds\\
&\le C_T(1+|x|+|y|)\left(\EE|Z_{t_0}|^2\right)^{1/2}\delta^{1/2},
\end{split}\end{equation} where in the first inequality we used \eqref{e:addsde}, and the second inequality follows from the Cauchy-Schwarz inequality.
Therefore, by \eref{F3.4} and \eref{IncrE},  we obtain that
\begin{align*}
&\left|\EE\left[\int^t_{t_0}\left(F(s/\varepsilon,X^{\varepsilon}_{s}, Y^{\varepsilon}_s) -F(s/\varepsilon,X^{\varepsilon}_{s}, \hat Y^{\varepsilon}_s)\right) Z_{t_0}ds\right]\right|\\
&\le \left|\EE\left[\int^t_{t_0}\left(F(s/\varepsilon,X^{\varepsilon}_s, Y^{\varepsilon}_s) -F(s/\varepsilon,X^{\varepsilon}_s, \hat Y^{\varepsilon}_s)\right) Z_{t_0}ds\right]\right.\\
&\quad\quad\quad-\left.\EE\left[\int^t_{t_0}\left(F(s/\varepsilon,X^{\varepsilon}_{s(\delta)}, Y^{\varepsilon}_s) -F(s/\varepsilon,X^{\varepsilon}_{s(\delta)}, \hat Y^{\varepsilon}_s)\right) Z_{t_0}ds\right]\right|\\
&\quad+\left|\EE\left[\int^t_{t_0}\left(F(s/\varepsilon,X^{\varepsilon}_{s(\delta)}, Y^{\varepsilon}_s) -F(s/\varepsilon,X^{\varepsilon}_{s(\delta)}, \hat Y^{\varepsilon}_s)\right) Z_{t_0}ds\right]\right|\\
 &\le  C_T\int^{t}_{t_0} \EE\left(|X^{\varepsilon}_s-X^{\varepsilon}_{s(\delta)}||Z_{t_0}|\right)ds+C_{T}(1+|x|+|y|) \left(\EE|Z_{t_0}|^2\right)^{1/2}\delta^{1/2}\\
&\le  C_{T}(1+|x|+|y|)\left(\EE|Z_{t_0}|^2\right)^{1/2}\delta^{1/2}.
\end{align*}
The proof is complete.
\end{proof}

\section{Proof of the strong averaging principle}

In this section, we will give the proof of Theorem \ref{main result 2}. As mentioned in Section \ref{Section1}, we will make use of the averaged SDE \eqref{MAE},
where the drift term involves an evolution system of measures $\{\mu^x_t\}_{t\in \RR}$ that was established in Proposition \ref{Pr2.5}.

\begin{lemma} \label{PMA} Suppose that Assumptions {\rm\ref{A1}} and {\rm\ref{A2}} hold.
For any $\varepsilon>0$ and $x\in \RR^n$, the SDE \eqref{MAE} has a unique strong solution $(\bar{X}^{\varepsilon}_t)_{t\ge0}$. Moreover, for any $p\ge 4$ and $T>0$, there exists a constant $C_{p,T}>0$ such that
\begin{align}
\sup_{\varepsilon\in(0,1]}\mathbb{E}\Big(\sup_{t\in [0, T]}|\bar{X}^{\varepsilon}_{t}|^{p}\Big)\le  C_{p,T}(1+|x|^{p}).\label{EAVE}
\end{align}
\end{lemma}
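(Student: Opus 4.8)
The plan is to recast \eqref{MAE} as a classical It\^o SDE whose coefficients are Lipschitz in the space variable and of at most linear growth, both uniformly in the time variable and in $\varepsilon$; once this is established, existence, uniqueness and the moment bound \eqref{EAVE} follow from standard arguments, the uniformity in $\varepsilon$ being a consequence of the fact that the time change $t\mapsto t/\varepsilon$ leaves the Lipschitz and growth constants unaffected.

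\textbf{Regularity of the averaged coefficients.} Recall that $\bar b(t,x)=\int_{\RR^m}b(t,x,y)\,\mu^x_t(dy)$, where $\{\mu^x_t\}_{t\in\RR}$ is the evolution system of measures from Proposition \ref{Pr2.5}. Linear growth of $\bar b(t,\cdot)$, uniformly in $t$, follows from \eqref{ConA12} together with the bound $\sup_{t\in\RR}\int_{\RR^m}|y|\,\mu^x_t(dy)\le C_1(1+|x|)$ in \eqref{F3.2}, since
\[
|\bar b(t,x)|\le\int_{\RR^m}C_0(1+|x|+|y|)\,\mu^x_t(dy)\le C(1+|x|).
\]
For the Lipschitz property, fix $t\ge0$ and $x_1,x_2\in\RR^n$ and split
\begin{align*}
|\bar b(t,x_1)-\bar b(t,x_2)|\le\;&\int_{\RR^m}\bigl|b(t,x_1,y)-b(t,x_2,y)\bigr|\,\mu^{x_1}_t(dy)\\
&+\left|\int_{\RR^m}b(t,x_2,y)\,\mu^{x_1}_t(dy)-\int_{\RR^m}b(t,x_2,y)\,\mu^{x_2}_t(dy)\right|.
\end{align*}
The first term is at most $C_0|x_1-x_2|$ by \eqref{ConA11}; since $b(t,x_2,\cdot)$ is Lipschitz with constant $C_0$ by \eqref{ConA11}, the second term is bounded by $C_0\,\mathbb{W}_1(\mu^{x_1}_t,\mu^{x_2}_t)\le C_0C_2|x_1-x_2|$ via the Kantorovich--Rubinstein duality and \eqref{F3.3}. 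Hence $\bar b(t,\cdot)$ is Lipschitz with a constant independent of $t$; joint measurability of $(t,x)\mapsto\bar b(t,x)$ is routine, using that $t\mapsto\mu^x_t$ is a pointwise limit of measurable maps (see \eqref{Pro1}). Finally, recalling that $\sigma(t,x,y)=\sigma(t,x)$ under Assumption \ref{A4}, which is in force throughout this section, \eqref{ConA11}--\eqref{ConA12} give directly that $\sigma(t,\cdot)$ is Lipschitz with constant $C_0$ and satisfies $\|\sigma(t,x)\|\le C_0(1+|x|)$, uniformly in $t$.

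\textbf{Existence, uniqueness and moments.} For each $\varepsilon>0$ the coefficients $(t,x)\mapsto\bar b(t/\varepsilon,x)$ and $(t,x)\mapsto\sigma(t/\varepsilon,x)$ of \eqref{MAE} are measurable, Lipschitz in $x$ and of linear growth, all uniformly in $t$ and with constants independent of $\varepsilon$; hence \eqref{MAE} has a unique strong solution $(\bar X^\varepsilon_t)_{t\ge0}$ by \cite[Theorem 3.1.1]{LR2015}. For \eqref{EAVE}, fix $p\ge4$ and $T>0$; applying It\^o's formula to $|\bar X^\varepsilon_\cdot|^p$, the Burkholder--Davis--Gundy inequality and the linear growth bounds above, one gets for $t\in[0,T]$
\[
\EE\Bigl(\sup_{s\in[0,t]}|\bar X^\varepsilon_s|^p\Bigr)\le C_{p,T}(1+|x|^p)+C_{p,T}\int_0^t\EE\Bigl(\sup_{r\in[0,s]}|\bar X^\varepsilon_r|^p\Bigr)\,ds,
\]
exactly as in the proof of Lemma \ref{PMY} but with no fast component present; Gronwall's inequality then yields \eqref{EAVE} with $C_{p,T}$ independent of $\varepsilon\in(0,1]$.

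\textbf{Main obstacle.} The only point that genuinely uses the analysis of Section 2 is the uniform-in-$t$ Lipschitz estimate for $\bar b$, which relies on the Lipschitz dependence $x\mapsto\mu^x_t$ in the $L^1$-Wasserstein distance, namely \eqref{F3.3}, combined with Kantorovich--Rubinstein duality; the remaining steps are standard.
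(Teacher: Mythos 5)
Your proposal is correct and follows essentially the same route as the paper: both reduce the lemma to the uniform-in-$t$ Lipschitz and linear-growth bounds for $\bar b(t,\cdot)$ (the paper's \eqref{barc1}), proved via \eqref{ConA11}--\eqref{ConA12} together with \eqref{F3.2}, \eqref{F3.3} and Kantorovich--Rubinstein duality, after which existence, uniqueness and \eqref{EAVE} follow by the standard It\^o--BDG--Gronwall argument with constants unaffected by the time change $t\mapsto t/\varepsilon$. The only cosmetic difference is that you spell out the standard part (and the measurability and $\sigma(t,x,y)=\sigma(t,x)$ bookkeeping, the latter being implicit in the form of \eqref{MAE} rather than a quoted hypothesis of the lemma), which the paper dismisses as obvious.
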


\begin{proof} According to Assumption {\rm\ref{A1}}, it is sufficient to prove that there is a constant $C>0$ such that for any $t\ge 0$ and $x_1,x_2\in\RR^n$,
\begin{align}\label{barc1}\begin{split}
|\bar b(t,x_{1})-\bar b(t,x_{2})|\le  C|x_{1}-x_{2}|,\quad\quad |\bar{b}(t,x_1)|\le  C(1+|x_1|).
\end{split}
\end{align}
Then the SDE \eqref{MAE} admits a unique strong solution, and \eref{EAVE} holds obviously.

By \eqref{ConA11} and \eqref{F3.3}, for all $t\ge0$ and $x_1,x_2\in \RR^n$,
\begin{align*}
|\bar b(t,x_{1})-\bar b(t,x_{2})|
=&\left|\int b(t,x_{1},y)\mu^{x_1}_t(dy)-\int b(t,x_{2},y)\mu^{x_2}_t(dy)\right|\\
\le &\left|\int b(t,x_{1},y)\mu^{x_1}_t(dy)-\int b(t,x_{2},y)\mu^{x_1}_t(dy)\right|\\
&+\left|\int b(t,x_{2},y)\mu^{x_1}_t(dy)-\int b(t,x_{2},y)\mu^{x_2}_t(dy)\right|\\
\le&C|x_1-x_2|.
\end{align*}
Meanwhile, using \eqref{ConA12} and \eqref{F3.2}, we get that for all $t\ge0$ and $x_1\in \RR^n$,
\begin{align*}
|\bar b(t,x_{1})|
=\left|\int_{\RR^m} b(t,x_{1},y)\mu^{x_1}_t(dy)\right|\le  C\int_{\RR^m} (1+|x_1|+|y|)\mu^{x_1}_t(dy)\le C(1+|x_1|).
\end{align*}
The proof is complete.		
\end{proof}

The following result is a crucial link in establishing the proof of the strong averaging principle.

\begin{lemma} \label{lemma 3.2} Suppose that $\sigma(t,x,y)=\sigma(t,x)$ and Assumptions {\rm\ref{A1}} and {\rm\ref{A2}} hold.
Let $(X_t^\varepsilon,Y_t^\varepsilon)_{t\ge0}$ and $(\bar{X}_t^{\varepsilon})_{t\ge0}$ be the strong solution to the SDEs \eqref{Equation} and \eqref{MAE}, respectively. Then, for any $\varepsilon>0$,  $(x,y)\in\RR^{n}\times\RR^{m}$ and $T>0$,
\begin{align}
\sup_{t\in[0,T]}\mathbb{E}|X_{t}^{\varepsilon}-\bar{X}^{\varepsilon}_{t}|^2\le  C_T(1+|x|^2+|y^2|)\varepsilon^{1/3}. \label{R1}
\end{align}
\end{lemma}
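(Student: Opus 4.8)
The plan is to estimate $Z^\varepsilon_t := X^\varepsilon_t - \bar X^\varepsilon_t$ by writing out the difference of the two integral equations and using the classical time-discretization (freezing) technique of Khasminskii, together with the auxiliary process $(\hat Y^\varepsilon_t)_{t\ge0}$ and, crucially, the weak-type comparison estimate of Lemma \ref{DEY}. Since $\sigma(t,x,y)=\sigma(t,x)$, the diffusion parts of $X^\varepsilon$ and $\bar X^\varepsilon$ differ only through $\sigma(s/\varepsilon,X^\varepsilon_s)-\sigma(s/\varepsilon,\bar X^\varepsilon_s)$, which is Lipschitz in the state and hence contributes a term controlled by $\int_0^t\EE|Z^\varepsilon_s|^2\,ds$ after Burkholder--Davis--Gundy. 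The drift difference is
\[
\int_0^t \big[b(s/\varepsilon,X^\varepsilon_s,Y^\varepsilon_s)-\bar b(s/\varepsilon,\bar X^\varepsilon_s)\big]\,ds,
\]
and the whole point is to decompose it so that after squaring and taking expectations one is left with $C_T\int_0^t\EE|Z^\varepsilon_s|^2\,ds$ plus an error that vanishes as $\varepsilon\to0$ at rate $\varepsilon^{1/3}$.

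First I would split $b(s/\varepsilon,X^\varepsilon_s,Y^\varepsilon_s)-\bar b(s/\varepsilon,\bar X^\varepsilon_s)$ into four pieces: (a) $b(s/\varepsilon,X^\varepsilon_s,Y^\varepsilon_s)-b(s/\varepsilon,X^\varepsilon_{s(\delta)},Y^\varepsilon_s)$; (b) $b(s/\varepsilon,X^\varepsilon_{s(\delta)},Y^\varepsilon_s)-b(s/\varepsilon,X^\varepsilon_{s(\delta)},\hat Y^\varepsilon_s)$; (c) $b(s/\varepsilon,X^\varepsilon_{s(\delta)},\hat Y^\varepsilon_s)-\bar b(s/\varepsilon,X^\varepsilon_{s(\delta)})$; (d) $\bar b(s/\varepsilon,X^\varepsilon_{s(\delta)})-\bar b(s/\varepsilon,\bar X^\varepsilon_s)$, where $s(\delta)=\lfloor s/\delta\rfloor\delta$. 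Pieces (a) and (d) are handled by the Lipschitz bounds (from Assumption \ref{A1} and Lemma \ref{PMA}) together with the increment estimate \eqref{IncrE}; they contribute $C_T(1+|x|^2+|y|^2)\delta$ and $C_T\int_0^t\EE|Z^\varepsilon_s|^2\,ds + C_T(1+|x|^2+|y|^2)\delta$ respectively (using $|X^\varepsilon_{s(\delta)}-\bar X^\varepsilon_s|\le |X^\varepsilon_{s(\delta)}-X^\varepsilon_s|+|Z^\varepsilon_s|$). For piece (b), rather than bounding $\EE|Y^\varepsilon_s-\hat Y^\varepsilon_s|$ directly — which is impossible under only partial dissipativity — I would apply Lemma \ref{DEY} with $F=b$ componentwise (and a suitable $Z_{t_0}$, here essentially $Z_{t_0}=1$ up to handling the square) to get a contribution of order $(1+|x|+|y|)\delta^{1/2}$ after squaring. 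Piece (c) is the genuine averaging term: on each block $[k\delta,(k+1)\delta)$ the process $\hat Y^\varepsilon$ solves the frozen SDE with frozen slow variable $X^\varepsilon_{k\delta}$ and rescaled time, so by the exponential ergodicity \eqref{WErodicity} of the semigroup $P^x_{s,t}$ toward the evolution system of measures $\{\mu^x_t\}$, the time average of $b(\cdot,X^\varepsilon_{k\delta},\hat Y^\varepsilon_\cdot)$ over the block is close to $\bar b(\cdot,X^\varepsilon_{k\delta})$ with an error governed by $\varepsilon/\delta$ (the ratio of the noise time-scale to the block length), yielding a contribution of order $(\varepsilon/\delta)^{1/2}$ or $\varepsilon/\delta$ times a polynomial in $(|x|,|y|)$.

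Collecting everything, Gronwall's inequality applied to $\varphi(t):=\sup_{r\le t}\EE|Z^\varepsilon_r|^2$ (or directly to $\EE|Z^\varepsilon_t|^2$, since BDG on the martingale part already reduces to $\int_0^t\EE|Z^\varepsilon_s|^2ds$) gives
\[
\sup_{t\in[0,T]}\EE|Z^\varepsilon_t|^2 \le C_T(1+|x|^2+|y|^2)\Big(\delta + \delta^{1/2} + \frac{\varepsilon}{\delta}\Big),
\]
and optimizing by choosing $\delta=\varepsilon^{2/3}$ balances $\delta^{1/2}\sim\varepsilon^{1/3}$ against $\varepsilon/\delta\sim\varepsilon^{1/3}$, which produces the claimed rate $\varepsilon^{1/3}$. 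I expect the main obstacle to be piece (c): one must carefully pass from the rescaled frozen dynamics on a block to the quantitative ergodic estimate, keeping track of the initial data $Y^\varepsilon_{k\delta}$ (whose moments grow only polynomially in $|x|,|y|$ by \eqref{Y}), and one must be slightly careful that the error term in \eqref{WErodicity} is linear in $|y|$ so that, after integrating against the law of $\hat Y^\varepsilon$, the moment estimates of Lemma \ref{MDY} keep the bound polynomial and uniform in $\varepsilon$. A secondary technical point is that Lemma \ref{DEY} is stated with a fixed $\mathscr F_{t_0}$-measurable $Z_{t_0}$, so to apply it inside the square one expands $|\int\cdots|^2$ as a double integral and conditions appropriately, or simply applies the lemma with $Z_{t_0}=1$ after first using Cauchy--Schwarz in time.
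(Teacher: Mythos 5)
Your proposal follows essentially the same route as the paper's proof: Khasminskii time discretization with the auxiliary process $\hat Y^{\varepsilon}$, the weak-type estimate of Lemma \ref{DEY} applied after expanding the squared drift error as a double time integral with the $\mathscr{F}_r$-measurable factor at the earlier time playing the role of $Z_{t_0}$, block-wise exponential ergodicity \eqref{WErodicity} toward $\{\mu^x_t\}$ for the genuine averaging term producing the $\varepsilon/\delta$ error, and Gronwall with the choice $\delta=\varepsilon^{2/3}$; the slight reordering in your decomposition (freezing $X^{\varepsilon}$ at $s(\delta)$ before replacing $Y^{\varepsilon}$ by $\hat Y^{\varepsilon}$) is immaterial. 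One caveat: your fallback of ``Cauchy--Schwarz in time and then Lemma \ref{DEY} with $Z_{t_0}=1$'' does not work, since after Cauchy--Schwarz one would need a bound on $\EE|b(s/\varepsilon,\cdot,Y^{\varepsilon}_s)-b(s/\varepsilon,\cdot,\hat Y^{\varepsilon}_s)|^2$, i.e.\ an $L^2$-estimate of $Y^{\varepsilon}_s-\hat Y^{\varepsilon}_s$, which is exactly what the partially dissipative setting rules out; only your primary route (double-integral expansion plus conditioning) is viable, and it is the one the paper uses.
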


\begin{proof}
Since $\sigma(t,x,y)=\sigma(t,x)$, for any $s\in [0,T]$,
\begin{align*}
X_{s}^{\varepsilon}-\bar{X}^{\varepsilon}_{s}
=&\int_{0}^{s}\left[b(r/\varepsilon,X_{r}^{\varepsilon},Y_{r}^{\varepsilon})-\bar{b}(r/\varepsilon,\bar{X}^{\varepsilon}_{r})\right]dr
+\int_{0}^{s}\left[\sigma(r/\varepsilon,X^{\varepsilon}_{r})-\sigma(r/\varepsilon,\bar{X}^{\varepsilon}_{r})\right]dW^{1}_r\\
=&\int_{0}^s\!\left[b(r/\varepsilon,X_{r}^{\varepsilon},Y_{r}^{\varepsilon})\!-\!b(r/\varepsilon, X_{r}^{\varepsilon},\hat{Y}_{r}^{\varepsilon})\right]dr\!+\!\int_0^s\!\left[b(r/\varepsilon,X_{r}^{\varepsilon},\hat{Y}_{r}^{\varepsilon})\!-\!b(r/\varepsilon, X_{r(\delta)}^{\varepsilon},\hat{Y}_{r}^{\varepsilon})\right]dr\\
&+\int_{0}^{s}\left[b(r/\varepsilon, X_{r(\delta)}^{\varepsilon},\hat{Y}_{r}^{\varepsilon})-\bar{b}(r/\varepsilon,X_{r(\delta)}^{\varepsilon})\right]dr+\int_{0}^{s}\left[\bar{b}(r/\varepsilon, X_{r(\delta)}^{\varepsilon})-\bar{b}( r/\varepsilon,X_{r}^{\varepsilon})\right]dr\\
&+\int_{0}^{s}\left[\bar{b}(r/\varepsilon,X_{r}^{\varepsilon})-\bar{b}(r/\varepsilon,\bar X^{\varepsilon}_{r})\right]dr+\int_{0}^{s}\left[\sigma(r/\varepsilon, X^{\varepsilon}_{r})-\sigma(r/\varepsilon,\bar{X}^{\varepsilon}_{r})\right]dW^{1}_r.
\end{align*}
Thus, for any $t\in [0,T]$,
\begin{align*}
&\sup_{s\in[0,t]}\EE|X_{s}^{\varepsilon}-\bar{X}^{\varepsilon}_{s}|^2\\
 &\le C_T\sup_{s\in [0,t]}\EE\left|\int_{0}^{s}\left[b(r/\varepsilon, X_{r}^{\varepsilon},Y_{r}^{\varepsilon})-b(r/\varepsilon, X_{r}^{\varepsilon},\hat{Y}_{r}^{\varepsilon})\right]dr\right|^2\\
&\quad +C_T\int_{0}^{t}\EE\left|b(r/\varepsilon,X_{r}^{\varepsilon},\hat{Y}_{r}^{\varepsilon})-b(r/\varepsilon, X_{r(\delta)}^{\varepsilon},\hat{Y}_{r}^{\varepsilon})\right|^2dr\nonumber\\
&\quad +C_T\sup_{s\in [0,t]}\EE\left|\int_{0}^{s}\left[b(r/\varepsilon,X_{r(\delta)}^{\varepsilon},\hat{Y}_{r}^{\varepsilon})-\bar{b}( r/\varepsilon,X_{r(\delta)}^{\varepsilon})\right]dr\right|^2\\
&\quad +C_T\!\!\int_{0}^{t}\!\!\EE\Big[\left|\bar{b}(r/\varepsilon, X_{r(\delta)}^{\varepsilon})\!-\!\bar{b}(r/\varepsilon,X_{r}^{\varepsilon})\right|^2\!\!+\!\left|\bar{b}(r/\varepsilon,X_{r}^{\varepsilon})\!-\!\bar{b}(r/\varepsilon,\bar X^{\varepsilon}_{r})\right|^2\!\!+\!\left\|\sigma(r/\varepsilon, X^{\varepsilon}_{r})\!-\!\sigma(r/\varepsilon,\bar{X}^{\varepsilon}_{r})\right\|^2\! \Big]dr.
\end{align*}
Then, by \eqref{ConA11}, \eqref{barc1} and \eqref{IncrE}, we have
\begin{align*}
\sup_{s\in[0,t]}\EE|X_{s}^{\varepsilon}-\bar{X}^{\varepsilon}_{s}|^2\le  &C_T(1+|x|^2+|y^2|)\delta+C_T\int_{0}^{t}\EE|X^{\varepsilon}_{r}-\bar{X}^{\varepsilon}_{r}|^2 dr\\
&+C_T\sup_{s\in [0,t]}\EE\left|\int_{0}^{s}\left[b(r/\varepsilon,X_{r}^{\varepsilon},Y_{r}^{\varepsilon})-b(r/\varepsilon,X_{r}^{\varepsilon},\hat{Y}_{r}^{\varepsilon})\right]dr\right|^2\nonumber\\
&+C_T\sup_{s\in [0,t]}\EE\left|\int_{0}^{s}\left[b(r/\varepsilon,X_{r(\delta)}^{\varepsilon},\hat{Y}_{r}^{\varepsilon})-\bar{b}(r/\varepsilon,X_{r(\delta)}^{\varepsilon})\right]dr\right|^2.
\end{align*}
This along with Gronwall's inequality yields that
\begin{equation}\label{I3.14}\begin{split}
&\sup_{t\in[0,T]}\EE|X_{t}^{\varepsilon}-\bar{X}^{\varepsilon}_{t}|^2\\
 &\le C_T(1+|x|^2+|y^2|)\delta+C_T\sup_{t\in [0,T]}\EE\Big|\int_{0}^{t}\left[b(s/\varepsilon,X_{s}^{\varepsilon},Y_{s}^{\varepsilon})-b(s/\varepsilon, X_{s}^{\varepsilon},\hat{Y}_{s}^{\varepsilon})\right]ds\Big|^2\\
&\quad +C_T\sup_{t\in [0,T]}\EE\Big|\int_{0}^{t}\left[b(s/\varepsilon,X_{s(\delta)}^{\varepsilon},\hat{Y}_{s}^{\varepsilon})-\bar{b}(s/\varepsilon, X_{s(\delta)}^{\varepsilon})\right]ds\Big|^2\\
&=:C_T(1+|x|^2+|y^2|)\delta+I_1(T)+I_2(T).
\end{split}\end{equation}

According to Lemma \ref{DEY}, \eqref{ConA12}, Lemma \ref{PMY} and Lemma \ref{MDY}, we have
\begin{equation}\label{I1}\begin{split}
&I_1(T)\\
&\le   C_T\sup_{t\in [0,T]}
\EE\!\int_{0}^{t}\!\!\int_{r}^{t}\!\Big\langle b(s/\varepsilon,X_{s}^{\varepsilon},Y_{s}^{\varepsilon})\!-\!b(s/\varepsilon,X_{s}^{\varepsilon},\hat{Y}_{s}^{\varepsilon}), b(r/\varepsilon, X_{r}^{\varepsilon},Y_{r}^{\varepsilon})\!-\!b(r/\varepsilon,X_{r}^{\varepsilon},\hat{Y}_{r}^{\varepsilon})\Big\rangle ds dr\\
&\le  C_T\sup_{t\in [0,T]}\!\int_{0}^{t}\!\left|\EE\!\int_{r}^{t}\Big\langle b(s/\varepsilon,X_{s}^{\varepsilon},Y_{s}^{\varepsilon})\!-\!b(s/\varepsilon,X_{s}^{\varepsilon},\hat{Y}_{s}^{\varepsilon}),b(r/\varepsilon,X_{r}^{\varepsilon},Y_{r}^{\varepsilon})\!-\!b(r/\varepsilon,X_{r}^{\varepsilon},\hat{Y}_{r}^{\varepsilon})\Big\rangle ds \right|dr \\
&\le    C_T(1+|x|^2+|y|^2)\delta^{1/2}.
\end{split}\end{equation}

Next, we will estimate $I_2(T)$. It follows from  \eqref{ConA12}, \eqref{barc1},  Lemma \ref{PMY} and Lemma \ref{MDY} that
\begin{align*}
I_2(T)\le  & C_T\sup_{t\in [0,T]}\EE\Big|\sum_{k=0}^{[t/\delta]-1}\!
\int_{k\delta}^{(k+1)\delta}\left[b(s/\varepsilon,X_{k\delta}^{\varepsilon},\hat{Y}_{s}^{\varepsilon})-\bar{b}(s/\varepsilon, X_{k\delta}^{\varepsilon})\right]ds\Big|^2\\
&+C_T\sup_{t\in [0,T]}\EE\Big|
\int_{[t/\delta]\delta}^{t}\left[b(s/\varepsilon,X_{t(\delta)}^{\varepsilon},\hat{Y}_{s}^{\varepsilon})-\bar{b}(s/\varepsilon, X_{t(\delta)}^{\varepsilon})\right]ds\Big|^2\\
\le & C_T[T/\delta]\sum_{k=0}^{[T/\delta]-1}\!
\int_{k\delta}^{(k+1)\delta}\!\int_{r}^{(k+1)\delta}\Psi_{k}(s,r)ds dr+C_T(1+|x|^2+|y|^2)\delta^2,
\end{align*}
where in the last inequality we used the Cauchy-Schwarz inequality and
\begin{align*}
\Psi_{k}(s,r):=\mathbb{E}\left[
\langle b(s/\varepsilon, X_{k\delta}^{\varepsilon},\hat{Y}_{s}^{\varepsilon})-\bar{b}(s/\varepsilon, X_{k\delta}^{\varepsilon}),
b(r/\varepsilon, X_{k\delta}^{\varepsilon},\hat{Y}_{r}^{\varepsilon})-\bar{b}(r/\varepsilon, X_{k\delta}^{\varepsilon})\rangle \right]
\end{align*}
for $k\delta\le  r\le  s\le  (k+1)\delta$. As pointed out in the proof of Lemma \ref{DEY}, we know that
for $k=0,1,\ldots, [T/\delta]$ and $t\in[k\delta,(k+1)\delta]$,
$\hat{Y}_{t}^{\varepsilon}=\tilde Y^{\varepsilon,k\delta,X_{k\delta}^{\varepsilon},Y_{k\delta}^{\varepsilon}}_t,$
which implies that for all $k\delta\le  r\le  s\le  (k+1)\delta$,
\begin{align*}
\Psi_{k}(s,r)=&\mathbb{E}\!\left[
\Big\langle b\big(s/\varepsilon, X_{k\delta}^{\varepsilon},\tilde{Y}^{\varepsilon, k\delta, X_{k\delta}^{\varepsilon},  Y_{k\delta}^{\varepsilon}}_{s}\big)\!-\!\bar{b}(s/\varepsilon, X_{k\delta}^{\varepsilon}), b (r/\varepsilon, X_{k\delta}^{\varepsilon},\tilde{Y}^{\varepsilon, k\delta, X_{k\delta}^{\varepsilon},  Y_{k\delta}^{\varepsilon}}_{r} )\!-\!\bar{b}(r/\varepsilon, X_{k\delta}^{\varepsilon})\Big\rangle \right].
\end{align*}
Note that $X_{k\delta}^{\varepsilon}$ and $ Y_{k\delta}^{\varepsilon}$  are $\mathscr{F}_{k\delta}$-measurable, and that,  for any $(x',y')\in\RR^n\times\RR^m$,  $\{\tilde{Y}^{\varepsilon, k\delta, x', y'}_{s}\}_{s\ge k\delta}$ is independent of $\mathscr{F}_{k\delta}$. Then, for all $k\delta\le  r\le  s\le  (k+1)\delta$,
\begin{align*}
&\Psi_{k}(s,r)\\
&=\EE\!\left\{\mathbb{E}\!\left[
\Big\langle b\big(s/\varepsilon,X_{k\delta}^{\varepsilon},\tilde {Y}^{\varepsilon, k\delta, X_{k\delta}^{\varepsilon},  Y_{k\delta}^{\varepsilon}}_{s}\big)\!-\!\bar{b}(s/\varepsilon, X_{k\delta}^{\varepsilon}), b\big(r/\varepsilon,X_{k\delta}^{\varepsilon},\tilde{Y}^{\varepsilon, k\delta, X_{k\delta}^{\varepsilon},  Y_{k\delta}^{\varepsilon}}_{r}\big)-\bar{b}(r/\varepsilon, X_{k\delta}^{\varepsilon})\Big\rangle |\mathscr{F}_{k\delta}\right]\right\}\\
&=\EE\Big\{\Big[\mathbb{E}
\Big\langle b\big(s/\varepsilon,x',\tilde {Y}^{\varepsilon, k\delta, x', y'}_{s}\big)\!-\!\bar{b}(s/\varepsilon, x'),  b\big(r/\varepsilon, x',\tilde{Y}^{\varepsilon, k\delta,x', y'}_{r}\big)-\bar{b}(r/\varepsilon, x')\Big\rangle \Big]\Big|_{x'=X_{k\delta}^{\varepsilon}, y'=Y_{k\delta}^{\varepsilon}}\Big\}.
\end{align*}
Furthermore, the proof of Lemma \ref{DEY} implies the distribution of $\big(\tilde Y^{\varepsilon, k\delta, x',y'}_{s\varepsilon}\big)_{\frac{k\delta}{\varepsilon}\le  s\le  \frac{(k+1)\delta}{\varepsilon}}$
coincides with that of
 $\big(Y_{s}^{\frac{k\delta}{\varepsilon},x', y'}\big)_{\frac{k\delta}{\varepsilon}\le  s\le  \frac{(k+1)\delta}{\varepsilon}}$. Thus, \eref{WErodicity}, \eqref{barc1} and Lemma \ref{PMY} yield that for all $k\delta\le  r\le  s\le  (k+1)\delta$,
\begin{align*}
&\Psi_{k}(s,r)\\
 &=\EE\Big\{\!\Big[\mathbb{E}
\Big\langle b\big(s/\varepsilon,x',Y^{k\delta/\varepsilon, x', y'}_{s/\varepsilon}\big)\!-\!\bar{b}(s/\varepsilon, x'), b\big(r/\varepsilon, x',Y^{k\delta/\varepsilon,x', y'}_{r/\varepsilon}\big)\!-\!\bar{b}(r/\varepsilon, x')\Big\rangle \Big]\Big|_{x'=X_{k\delta}^{\varepsilon}, y'=Y_{k\delta}^{\varepsilon}}\Big\}\\
 &=\EE\Big\{\!\Big[\mathbb{E}
\Big\langle\! \Big(\EE b\big(s/\varepsilon,\!x'\!,\!Y^{r/\varepsilon, x', z}_{s/\varepsilon}\big)\!-\!\bar{b}(s/\varepsilon, \!x')\big)\Big|_{z=Y^{k\delta/\varepsilon, x', y'}_{r/\varepsilon}}, \!b\big(r/\varepsilon,\!x'\!,\!Y^{k\delta/\varepsilon,x', y'}_{r/\varepsilon}\big)\!-\!\bar{b}(r/\varepsilon, \!x')\Big\rangle \Big]\Big|_{x'=X_{k\delta}^{\varepsilon}, y'=Y_{k\delta}^{\varepsilon}}\Big\}\\
&\le C\EE\Big\{\EE\left(1+|x'|^{2}+ |Y^{k\delta/\varepsilon, x', y'}_{r/\varepsilon}|^{2}\right)\Big|_{x'=X_{k\delta}^{\varepsilon}, y'=Y_{k\delta}^{\varepsilon}}\Big\}e^{-\frac{\beta (s-r)}{\varepsilon}}\\
 &\le C_T\EE\left(1+|X^{\varepsilon}_{k\delta}|^{2}+|Y_{k\delta}^{\varepsilon}|^{2}\right)e^{-\frac{\beta (s-r)}{\varepsilon}}\\
 &\le C_{T}(1+|x|^{2}+|y|^{2})e^{-\frac{\beta (s-r)}{\varepsilon}}.
\end{align*}
Hence,
\begin{equation}\begin{split}
I_2(T)\le &C_{T}(1+|x|^{2}+|y|^{2})[T/\delta]\sum_{k=0}^{[T/\delta]-1}\!\int_{k\delta}^{(k+1)\delta}
\!\int_{r}^{(k+1)\delta}\!e^{-\frac{\beta (s-r)}{\varepsilon}}dsdr+C_T(1+|x|^2+|y|^2)\delta^2   \\
\le &C_{T}(1+|x|^{2}+|y|^{2})\varepsilon/\delta+C_T(1+|x|^2+|y|^2)\delta^2.\label{I2}
\end{split}\end{equation}

Putting \eref{I3.14}, \eref{I1} and \eref{I2} together, we finally obtain that
\begin{align*}
\sup_{t\in[0,T]}\EE|X_{t}^{\varepsilon}-\bar{X}^{\varepsilon}_{t}|^2\le  C_T(1+|x|^2+|y^2|)\left(\delta^{1/2}+\varepsilon/\delta\right).
\end{align*}
Taking $\delta=\varepsilon^{2/3}$, we find that
\begin{align}
\sup_{t\in[0,T]}\EE|X_{t}^{\varepsilon}-\bar{X}^{\varepsilon}_{t}|^2
\le  C_T(1+|x|^2+|y^2|)\varepsilon^{1/3}. \label{FAV}
\end{align}
The proof is complete. \end{proof}

Next, we consider the  averaged SDE \eqref{e:ASDE1} as stated in the statement of the strong averaging principle.
For this, we will study the SDE \eqref{e:IN}, which is closely connected with the long-time behaviors of the fast component $(Y_t^\varepsilon)_{t\ge0}$.
Note that Assumption {\rm\ref{A2}} and condition \eqref{Cf} imply that
\begin{align}
\big\langle \bar f(x_1,y_1)\!-\!\bar f(x_2,y_2),y_1\!-\!y_2\big\rangle
\!\le\!  \begin{cases} C|y_1\!-\!y_2|^2\!+\!C|x_1\!-\!x_2||y_1\!-\!y_2|,\! & |y_1\!-\!y_2|\le r_0,\\
-K|y_1\!-\!y_2|^2\!+\!C|x_1\!-\!x_2||y_1\!-\!y_2|,\! &|y_1\!-\!y_2|>r_0,\end{cases}\label{F6.1}
\end{align}
and for any $x\in\mathbb{R}^n$,
\begin{align*}
|\bar f(x,{\bf0})|\le  C_*(1+|x|).
\end{align*}
In particular, by the same argument as used in the proof of \eqref{A23}, for all $(x,y)\in \mathbb{R}^n\times\mathbb{R}^m$,
\begin{align*}
\langle \bar f(x,y),y\rangle\le  -K_1|y|^2+K_2(1+|x|^2),
\end{align*}
where $0<K_1<K$ and $K_2>0$. Therefore,  the SDE \eqref{e:IN}
admits a unique strong solution $(\bar Y_{t}^{x,y})_{t\ge 0}$, so that the following statements hold:

\begin{itemize}
\item[{\rm(i)}] For any $p\ge4$, there are constants $C_{1,p}, C_{2,p}>0$ such that for all $t\ge0$, $x\in \RR^n$ and $y\in \RR^m$,
\begin{equation}\label{e:IN1}\EE |\bar Y_t^{x,y}|^p\le e^{-C_{1,p}t }|y|^p+C_{2,p}(1+|x|^p).\end{equation}

\item[{\rm (ii)}] For any $t\ge0$, $x_1,x_2\in \RR^n$ and $y_1,y_2\in \RR^m$,
\begin{equation}\label{barErgodicity1}\mathbb{W}_1(\delta_{y_1}\bar{P}^{x_1}_{t}, \delta_{y_2}\bar{P}^{x_2}_{t})\le Ce^{-\beta t}|y_1-y_2|+C|x_1-x_2|,\end{equation} where $\{\bar{P}^{x}_{t}\}_{t\ge 0}$ is the transition semigroup of $(\bar Y_{t}^{x,y})_{t\ge0}$, and $\delta_y \bar P^{x}_{t}$ is the distribution of $\bar Y^{x,y}_t$.

\item[{\rm (iii)}] The process $(\bar Y_{t}^{x,y})_{t\ge 0}$  has a unique invariant measure $\mu^x$, which satisfies that for all $t\ge0$, $x\in \RR^n$ and $y\in \RR^m$,
\begin{equation}\label{barErgodicity2}\mathbb{W}_1(\delta_{y}\bar{P}^{x}_{t}, \mu^x)\le C(1+|x|+|y|)e^{-\beta t} , \quad \int_{\RR^m}|y|\mu^x(dy)\le C(1+|x|),
\end{equation} and that for any $x_1,x_2\in\mathbb{R}^n$, \begin{align}
\mathbb{W}_1(\mu^{x_1}, \mu^{x_2})\le C|x_1-x_2|,\label{Lipmux}
\end{align} where $\beta>0$ is given in Lemma  $\ref{L3.1}$.
\end{itemize}
To obtain all the assertions above, one can follow the proofs of Lemmas \ref{L3.1} and \ref{Le3.3}. We omit the details here.

\begin{lemma}\label{Lemma6.2}
Suppose that Assumptions {\rm\ref{A1}}, {\rm\ref{A2}} and {\rm\ref{A3}} hold.  Let  $\{\mu^x_{t}\}_{t\in\RR}$ be an evolution system of measures  given in Lemma $\ref{Le3.3}$, and $\mu^x$ be the invariant probability measure given above. Then,
\begin{align}
\mathbb{W}_1(\mu^x_t, \mu^x)\le C(1+|x|)\left[e^{-\beta t}+\int^t_0 e^{-\beta (t-u)}\phi_2(u)\,du\right],   \label{SErgodicity}
\end{align}
where $\beta>0$ is defined in Lemma $\ref{L3.1}$.
\end{lemma}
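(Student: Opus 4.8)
The plan is to parallel the proofs of Lemmas \ref{L3.1} and \ref{Le3.3}, comparing the time-inhomogeneous frozen process directly with the autonomous limit process associated with \eqref{e:IN} through a triangle inequality in $\mathbb{W}_1$ anchored at $\delta_{\textbf{0}}$, and then coupling those two processes (both started at $\textbf{0}$ at time $0$) by the asymptotic reflection coupling. Fix $t\ge0$ and $x\in\RR^n$, and write $\delta_{\textbf{0}}P^x_{0,t}$ for the law of $Y^{0,x,\textbf{0}}_t$ and $\delta_{\textbf{0}}\bar P^x_t$ for the law of $\bar Y^{x,\textbf{0}}_t$. I would start from
\begin{align*}
\mathbb{W}_1(\mu^x_t,\mu^x)\le \mathbb{W}_1\!\left(\mu^x_t,\delta_{\textbf{0}}P^x_{0,t}\right)+\mathbb{W}_1\!\left(\delta_{\textbf{0}}P^x_{0,t},\delta_{\textbf{0}}\bar P^x_t\right)+\mathbb{W}_1\!\left(\delta_{\textbf{0}}\bar P^x_t,\mu^x\right).
\end{align*}
By \eqref{F3.2} (with $s=0$, $y=\textbf{0}$) the first term is at most $C(1+|x|)e^{-\beta t}$, and by \eqref{barErgodicity2} (with $y=\textbf{0}$) the third term is at most $C(1+|x|)e^{-\beta t}$. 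Thus the whole question reduces to the middle term, i.e.\ to comparing the two processes started from the same point.

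For the middle term I would run the coupling from the proof of Lemma \ref{L3.1} for the pair $(Y^{0,x,\textbf{0}}_u,\bar Y^{x,\textbf{0}}_u)_{u\ge0}$, now with the drift pair $\bigl(f(u,x,\cdot),\bar f(x,\cdot)\bigr)$ in place of $\bigl(f(u,x_1,\cdot),f(u,x_2,\cdot)\bigr)$; since $\bar f(x,\cdot)$ is just a time-independent drift, the construction of the coupling and the generator identity for $h(|Y^{0,x,\textbf{0}}_u-\bar Y^{x,\textbf{0}}_u|)$ carry over verbatim. The only genuinely new term is the drift mismatch, which I would split as
\begin{align*}
\langle f(u,x,y_1)-\bar f(x,y_2),y_1-y_2\rangle
&=\langle \bar f(x,y_1)-\bar f(x,y_2),y_1-y_2\rangle\\
&\quad+\langle f(u,x,y_1)-\bar f(x,y_1),y_1-y_2\rangle.
\end{align*}
The first summand is handled exactly as in Lemma \ref{L3.1} using the partial dissipativity \eqref{F6.1} of $\bar f$ (with $x_1=x_2=x$, so that no cross term appears), and with the same choice $h(r)=1-e^{-c_1r}+c_2r$ produces the contraction $-\beta\,d(y_1,y_2)$ plus $\delta$-correction terms that vanish as $\delta\to0$; the second summand is bounded, using $h'\le c_1+c_2$ and \eqref{Cf}, by $C\phi_2(u)\bigl(1+|x|+|y_1|^k\bigr)$. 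Hence, along the coupling,
\begin{align*}
\frac{d}{du}\EE\, d(Y^{0,x,\textbf{0}}_u,\bar Y^{x,\textbf{0}}_u)
&\le -\beta\,\EE\, d(Y^{0,x,\textbf{0}}_u,\bar Y^{x,\textbf{0}}_u)\\
&\quad+C\phi_2(u)\bigl(1+|x|+\EE|Y^{0,x,\textbf{0}}_u|^k\bigr)+(\delta\text{-corrections}),
\end{align*}
and since by \eqref{uniformEY} (with zero initial datum) $\EE|Y^{0,x,\textbf{0}}_u|^k\le C(1+|x|)$ uniformly in $u$, the comparison theorem followed by $\delta\to0$ yields
\begin{align*}
\EE\, d(Y^{0,x,\textbf{0}}_t,\bar Y^{x,\textbf{0}}_t)\le C(1+|x|)\int_0^t e^{-\beta(t-u)}\phi_2(u)\,du.
\end{align*}
As $c_2|y_1-y_2|\le d(y_1,y_2)$, this gives $\mathbb{W}_1(\delta_{\textbf{0}}P^x_{0,t},\delta_{\textbf{0}}\bar P^x_t)\le c_2^{-1}\EE\, d(Y^{0,x,\textbf{0}}_t,\bar Y^{x,\textbf{0}}_t)$, and combining the three estimates produces \eqref{SErgodicity}.

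The routine parts are the reruns of the arguments of Lemmas \ref{L3.1} and \ref{Le3.3} together with the moment estimate \eqref{uniformEY}; the one delicate point is the drift-mismatch term. The key observation is that, because $\phi_2(u)\to0$, this term enters the differential inequality for $\EE\,d$ \emph{additively} (its coefficient $h'$ is bounded) rather than multiplicatively, so that Gronwall/comparison gives the convolution $\int_0^t e^{-\beta(t-u)}\phi_2(u)\,du$ and not an exponentially large factor. This is also why it is convenient to anchor the triangle inequality at $\delta_{\textbf{0}}$ and keep the initial time $s=0$, rather than letting $s\to-\infty$: the latter would additionally require handling the symmetric extension of $\phi_2$ to negative times and the finiteness of $\int_0^\infty e^{-\beta v}\phi_2(v)\,dv$.
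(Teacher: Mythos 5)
Your proposal is correct and follows essentially the same route as the paper: the triangle inequality anchored at $\delta_{\textbf{0}}$ combined with an asymptotic reflection coupling between the time-inhomogeneous frozen process and the autonomous process of \eqref{e:IN}, with the drift mismatch entering additively through $\phi_2$ and then convolved against $e^{-\beta(t-u)}$ by the comparison theorem. The only (cosmetic) difference is the side on which you split the drift: you invoke the partial dissipativity \eqref{F6.1} of $\bar f$ and the moment bound \eqref{uniformEY} for the inhomogeneous process, while the paper applies Assumption \ref{A2} to $f(t,x,\cdot)$ and controls $|f(t,x,\bar Z)-\bar f(x,\bar Z)|$ via the moment bound \eqref{e:IN1} for the limit process.
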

\begin{proof}
We first estimate $\mathbb{W}_{1}(\delta_y P^x_{s,t},\delta_y\bar{P}^x_{t-s})$. For this, we use the notations used in part (ii) of the proof for Lemma \ref{L3.1}, and rewrite the process
$(Y^{s,x,y}_t)_{t\ge s}$ solving the SDE \eqref{FrozenE} as in here, with the solution $(\bar{Z}^{s,x,y}_t)_{t\ge s}$ to the following SDE instead of $(Z_t^{s,x,y})_{t\ge s}$:
\begin{align*}
d\bar{Z}^{s,x,y}_t=&\bar f(x,Z^{s,x,y}_t)\,d t\\
&+\pi_\delta(|Y^{s,x,y}_t-\bar{Z}^{s,x,y}_t|)\left(d W^{2,1}_t-2\frac{(Y^{s,x,y}_t-\bar{Z}^{s,x,y}_t)\langle Y^{s,x,y}_t-\bar{Z}^{s,x,y}_t, dW_t^{2,1}\rangle}{|Y^{s,x,y}_t-\bar{Z}^{s,x,y}_t|^2}\right)\\
&+\sqrt{1-\pi_\delta(|Y^{s,x,y}_t-\bar{Z}^{s,x,y}_t|)}dW_t^{2,2}
\end{align*} with $\bar Z^{s,x,y}_s=y$.
In particular, the processes $(\bar{Y}^{x,y}_{t-s})_{t\ge s}$ and $(\bar{Z}^{s,x,y}_t)_{t\ge s}$ have the same distribution.
	
Let $L_t:= Y^{s,x,y}_t-\bar{Z}^{s,x,y}_t$ for $t\ge s$. We use the function $h$ defined in part (ii) of the proof of Lemma \ref{L3.1}. Following the arguments in
part (ii) of the proof of Lemma \ref{L3.1}, and using Assumption {\rm\ref{A2}} and \eqref{Cf}, we can find that
\begin{align*}
d h(|L_t|)\le  & \Big[ 2\pi_\delta(|L_t|)h''(|L_t|)\!+\!\langle f(t,x,Y^{s,x,y}_t)\!-\!\bar f(x,\bar{Z}^{s,x,y}_t),Y^{s,x,y}_t\!-\!\bar Z^{s,x,y}_t\rangle \frac{h'(|L_t|)}{|L_t|}\Big]dt \!+\!dM_t\\
\le &\Big[ 2\pi_\delta(|L_t|)h''(|L_t|)\!+\!\langle f(t,x,Y^{s,x,y}_t)\!-\!f(t,x,\bar{Z}^{s,x,y}_t),Y^{s,x,y}_t\!-\!\bar Z^{s,x,y}_t\rangle\frac{h'(|L_t|)}{|L_t|}\Big] \,dt \!+\!dM_t\\
&+\left| f(t,x,\bar{Z}^{s,x,y}_t)-\bar f(x,\bar{Z}^{s,x,y}_t)\right| h'(|L_t|) \,dt\\
\le  &-\beta h(|L_t|)\,dt+\left[C\phi_2(t)(1+|x|+|\bar{Z}^{s,x,y}_t|^k)\,+C(\delta)\right]\,dt+dM_t,
\end{align*}
where $(M_t)_{t\ge s}$ is a martingale, and in the last inequality we used $\|h'\|_\infty<\infty$, $\beta>0$ is given in Lemma \ref{L3.1}, and the constant $C(\delta)$ depends on $\delta$ and satisfies that $\lim_{\delta\to 0}C(\delta)=0$.  Since $(\bar{Y}^{x,y}_{t-s})_{t\ge s}$ and $(\bar{Z}^{s,x,y}_t)_{t\ge s}$ have the same distribution,  taking the expectation in both sides above and using \eqref{e:IN1}, we obtain that
$$
d \EE h(|L_t|)\le \left[C\phi_2(t)(1+|x|+|y|^k)+C(\delta)\right]\,dt-\beta\EE h(|L_t|)\,dt,
$$
which in turn implies that
$$
\EE h(|L_t|)\le  C(1+|x|+|y|^k)\int^t_s e^{-\beta (t-u)}\phi_2(u)\,du+C(\delta),\quad t\ge s.
$$
Letting $\delta\to0$,  one has
\begin{align}
\mathbb{W}_{1}(\delta_y P^x_{s,t},\delta_y\bar{P}^x_{t-s}) \le  C(1+|x|+|y|^k)\int^t_s e^{-\beta(t-u)}\phi_2(u)\,du.\label{F6.10}
\end{align}	

According to \eref{F3.2}, \eref{barErgodicity2} and \eref{F6.10}, we finally obtain that
\begin{align*}
\mathbb{W}_{1}(\mu^x_t,\mu^x)\le & \mathbb{W}_{1}(\mu^x_t,\delta_{\textbf{0}} P^x_{0,t})+\mathbb{W}_{1}(\delta_{\textbf{0}} P^x_{0,t},\delta_{\textbf{0}}\bar{P}^x_{t})+\mathbb{W}_{1}(\delta_{\textbf{0}}\bar{P}^x_{t},\mu^x)\\
\le & C(1+|x|)e^{-\beta t}+C(1+|x|)\int^t_0 e^{-\beta(t-u)}\phi_2(u)\,du\\
\le & C(1+|x|)\left[e^{-\beta t}+\int^t_0 e^{-\beta(t-u)}\phi_2(u)\,du\right].
\end{align*}
The proof is complete.
\end{proof}

For the  averaged SDE \eqref{e:ASDE1}, we have the following statement.

\begin{lemma} \label{FianlW} Suppose that Assumptions {\rm\ref{A1}}, {\rm\ref{A2}}, {\rm\ref{A3}} and  {\rm\ref{A4}} hold.
For any $x\in\RR^{n}$, the SDE \eqref{e:ASDE1} has a unique strong solution $(\bar{X}_t)_{t\ge0}$. Moreover, for any $p\ge 2$ and $T>0$, there exists a constant $C_{p,T}>0$ such that
\begin{align}
\mathbb{E}\left(\sup_{t\in [0, T]}|\bar{X}_{t}|^{p}\right)\le  C_{p,T}(1+|x|^{p}),\label{FianlEE}
\end{align}
and, for any $h>0$ with $0\le  t<t+h\le  T$,
\begin{align}
\mathbb{E}|\bar X_{t+h}-\bar X_{t}|^{2}
\le  C_{T}\left(1+|x|^{2}\right) h.\label{IncrEbarX}
\end{align}
\end{lemma}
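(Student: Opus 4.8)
The plan is to reduce the statement to the classical theory of SDEs with globally Lipschitz coefficients of linear growth, so the heart of the argument is to check that both $\bar b$ and $\bar\sigma$ enjoy these two properties. Once this is done, existence and uniqueness of the strong solution $(\bar X_t)_{t\ge0}$ to \eqref{e:ASDE1} follows from \cite[Theorem 3.1.1]{LR2015}, the moment estimate \eqref{FianlEE} follows by applying It\^o's formula to $|\bar X_t|^p$ together with the Burkholder--Davis--Gundy inequality and Gronwall's lemma, and the increment estimate \eqref{IncrEbarX} follows by bounding the drift term with H\"older's inequality and the diffusion term with the It\^o isometry, exactly as in part (ii) of the proof of Lemma \ref{PMY}.

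First I would extract the regularity of $\hat b$ from Assumption \ref{A3}. Linear growth comes from adding and subtracting inside \eqref{Cb} and using \eqref{ConA12}, which gives $|\hat b(x,y)|\le(\phi_1(1)+C_0)(1+|x|+|y|)$. For the Lipschitz property, one writes, for any $t\ge0$ and $T>0$,
$$\hat b(x_1,y_1)-\hat b(x_2,y_2)=\tfrac1T\!\int_t^{t+T}\!\!\big[(b(s,x_2,y_2)-\hat b(x_2,y_2))-(b(s,x_1,y_1)-\hat b(x_1,y_1))\big]\,ds+\tfrac1T\!\int_t^{t+T}\!\!\big[b(s,x_1,y_1)-b(s,x_2,y_2)\big]\,ds,$$
bounds the first integral by $\phi_1(T)(2+|x_1|+|x_2|+|y_1|+|y_2|)$ via \eqref{Cb} and the second by $C_0(|x_1-x_2|+|y_1-y_2|)$ via \eqref{ConA11}, and lets $T\to\infty$ to conclude $|\hat b(x_1,y_1)-\hat b(x_2,y_2)|\le C_0(|x_1-x_2|+|y_1-y_2|)$. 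Passing to $\bar b(x)=\int_{\RR^m}\hat b(x,y)\,\mu^x(dy)$, the linear growth of $\bar b$ follows from that of $\hat b$ combined with $\int_{\RR^m}|y|\,\mu^x(dy)\le C(1+|x|)$ from \eqref{barErgodicity2}, and the Lipschitz bound follows from the splitting
$$|\bar b(x_1)-\bar b(x_2)|\le\int_{\RR^m}|\hat b(x_1,y)-\hat b(x_2,y)|\,\mu^{x_1}(dy)+\Big|\int_{\RR^m}\hat b(x_2,y)\,\mu^{x_1}(dy)-\int_{\RR^m}\hat b(x_2,y)\,\mu^{x_2}(dy)\Big|,$$
where the first term is at most $C_0|x_1-x_2|$ and the second, by the Kantorovich--Rubinstein duality for $\mathbb{W}_1$ and the fact that $\hat b(x_2,\cdot)$ is $C_0$-Lipschitz, is at most $C_0\,\mathbb{W}_1(\mu^{x_1},\mu^{x_2})\le C_0C|x_1-x_2|$ thanks to \eqref{Lipmux}.

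Next I would treat $\bar\sigma$. Linear growth comes directly from \eqref{Csigam} (with $T=1$) and $\|\sigma(s,x)\|\le C_0(1+|x|)$ in \eqref{ConA12}, giving $\|\bar\sigma(x)\|^2\le 2\phi_3(1)(1+|x|^2)+2C_0^2(1+|x|)^2$. For the Lipschitz property, for fixed $s$ the triangle inequality and \eqref{ConA11} give $\|\bar\sigma(x_1)-\bar\sigma(x_2)\|\le\|\bar\sigma(x_1)-\sigma(s,x_1)\|+C_0|x_1-x_2|+\|\sigma(s,x_2)-\bar\sigma(x_2)\|$; averaging this over $s\in[t,t+T]$, applying the Cauchy--Schwarz inequality and invoking \eqref{Csigam} yields $\|\bar\sigma(x_1)-\bar\sigma(x_2)\|\le\phi_3(T)^{1/2}\big[(1+|x_1|^2)^{1/2}+(1+|x_2|^2)^{1/2}\big]+C_0|x_1-x_2|$, and since the left-hand side does not depend on $T$ while $\phi_3(T)\to0$, letting $T\to\infty$ gives $\|\bar\sigma(x_1)-\bar\sigma(x_2)\|\le C_0|x_1-x_2|$.

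With $\bar b$ and $\bar\sigma$ now known to be globally Lipschitz of linear growth, the three assertions of the lemma follow from standard arguments as indicated above. The only genuinely non-routine point is the passage from the \emph{asymptotic} averaging conditions \eqref{Cb} and \eqref{Csigam}, together with the Lipschitz dependence \eqref{Lipmux} of the invariant measure $\mu^x$ on $x$, to the pointwise Lipschitz and growth bounds for $\bar b$ and $\bar\sigma$; everything downstream of that is classical.
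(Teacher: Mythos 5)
Your proposal is correct and follows essentially the same route as the paper: the paper likewise deduces the Lipschitz continuity of $\hat b$ and $\bar\sigma$ from \eqref{ConA11}, \eqref{Cb} and \eqref{Csigam} (you simply spell out the let-$T\to\infty$ argument that the paper leaves implicit), then obtains the Lipschitz bound \eqref{Lipbarb} for $\bar b$ by exactly your two-term splitting with the $\mathbb{W}_1$-duality and \eqref{Lipmux}, and concludes existence, uniqueness, \eqref{FianlEE} and \eqref{IncrEbarX} by the same standard arguments. No gaps.
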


\begin{proof}
According to \eref{ConA11}, \eref{Cb} and \eref{Csigam}, for all $x_1,x_2\in \RR^n$ and $y_1,y_2\in \RR^m$,
\begin{align}
|\hat b(x_{1},y_1)-\hat b(x_{2},y_2)|+\|\bar\sigma(x_1)-\bar\sigma(x_2)\|\le  C(|x_{1}-x_{2}|+|y_1-y_2|).\label{Lipbsigma}
\end{align}
Then, \eref{Lipmux} and \eref{Lipbsigma} imply that for all $x_1,x_2\in \RR^n$,
\begin{equation}\label{Lipbarb}\begin{split}
|\bar b(x_{1})-\bar b(x_{2})|
=&\left|\int \hat b(x_{1},y)\mu^{x_1}(dy)-\int \hat b(x_{2},y)\mu^{x_2}(dy)\right|\\
\le &\left|\int \hat b(x_{1},y)\mu^{x_1}(dy)\!-\!\!\int \hat b(x_{2},y)\mu^{x_1}(dy)\right|\\
&+\!\left|\int \hat b(x_{2},y)\mu^{x_1}(dy)\!-\!\!\int \hat b(x_{2},y)\mu^{x_2}(dy)\right|\\
\le& C|x_1-x_2|.
\end{split}\end{equation}
Hence, by \eref{Lipbarb} and \eref{Lipbsigma}, the SDE \eref{e:ASDE1} admits a unique strong solution, and the other assertions hold by standard arguments. The proof is complete.
\end{proof}

We also need the simple lemma as follows.

\begin{lemma}\label{Pro4.4}
Let $\phi:\RR_{+}\to \RR_{+}$ be locally integrable and satisfy $\lim_{T\to+\infty}\phi(T)=0$. Then,
$$\lim_{T\to +\infty}\sup_{t\ge0}\frac{1}{T}\int_{t}^{t + T}\phi(s)ds=0.$$
\end{lemma}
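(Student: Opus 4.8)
The plan is to prove the statement by splitting the averaging interval into a bounded initial segment, where the local integrability of $\phi$ controls the contribution, and a tail segment, where the smallness of $\phi$ takes over. Fix $\eta>0$. Since $\lim_{T\to+\infty}\phi(T)=0$, there exists $M=M(\eta)>0$ such that $\phi(s)\le \eta$ for all $s\ge M$. Also, since $\phi$ is locally integrable, $A:=\int_0^{2M}\phi(s)\,ds<\infty$ (any finite bound on a fixed compact interval suffices; I use $2M$ for convenience, though $M$ would do after adjusting constants).

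Now I would estimate $\frac1T\int_t^{t+T}\phi(s)\,ds$ uniformly in $t\ge 0$, separating two cases. First, if the whole interval $[t,t+T]$ lies in $[M,\infty)$, i.e. $t\ge M$, then $\frac1T\int_t^{t+T}\phi(s)\,ds\le \frac1T\cdot T\eta=\eta$. Second, if $t<M$, then I split $[t,t+T]=[t,M]\cup[M,t+T]$ (the first piece being empty or irrelevant if $t+T\le M$); on $[t,M]\subseteq[0,M]$ the integral is at most $A$, and on $[M,t+T]$ the integrand is at most $\eta$, so $\int_t^{t+T}\phi(s)\,ds\le A+\eta T$, whence $\frac1T\int_t^{t+T}\phi(s)\,ds\le \frac{A}{T}+\eta$. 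Combining both cases, for every $t\ge0$,
\[
\frac1T\int_t^{t+T}\phi(s)\,ds\le \frac{A}{T}+\eta.
\]
Taking the supremum over $t\ge0$ and then letting $T\to\infty$ gives $\limsup_{T\to\infty}\sup_{t\ge0}\frac1T\int_t^{t+T}\phi(s)\,ds\le \eta$, and since $\eta>0$ is arbitrary, the limit is $0$.

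There is no real obstacle here; the only point requiring a moment of care is ensuring the bound $\frac{A}{T}+\eta$ holds uniformly in $t$, which is why the two cases above are organized so that the compact ``bad'' part of the integral is always contained in the fixed interval $[0,M]$ and hence bounded by the single constant $A$ independent of $t$. One should also note that $\sup_{t\ge0}\frac1T\int_t^{t+T}\phi(s)\,ds$ is finite for each fixed $T$ by the same estimate (since $\phi$ is bounded on $[0,M]$ by local integrability and by $\eta$ beyond $M$, it is bounded on all of $\RR_+$, so the Cesàro-type average is finite), so the limit statement is meaningful.
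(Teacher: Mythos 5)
Your proof is correct and follows essentially the same approach as the paper's: choose $M$ so that $\phi\le\eta$ on $[M,\infty)$, bound the contribution of the compact part by the fixed constant $\int_0^{M}\phi(s)\,ds$ (finite by local integrability), obtain the uniform-in-$t$ bound $\tfrac{A}{T}+\eta$, and let $T\to\infty$ then $\eta\to0$. One tiny caveat on your closing aside: local integrability does not imply $\phi$ is bounded on $[0,M]$, but this remark is superfluous since your main estimate $\tfrac{A}{T}+\eta$ already shows the supremum is finite for each fixed $T$.
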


\begin{proof}
For any $\epsilon>0$,  $\lim_{T\to+\infty}\phi(T)=0$ implies that there exists $M_{\epsilon}>0$ such that for $s\ge M_{\epsilon}$, $\phi(s)<\epsilon$. Then, for any $T>0$ and $t\ge0$,
\begin{align*}
\int_{t}^{t + T}\phi(s)ds=\int_{t}^{\max\{t,M_{\epsilon}\}}\phi(s)ds+\int_{\max\{t,M_{\epsilon}\}}^{t + T}\phi(s)ds
\le  \int_{0}^{M_{\epsilon}}\phi(s)ds+\epsilon T,
\end{align*}
which implies that
\begin{align*}
\sup_{t\ge 0}\frac{1}{T}\int_{t}^{t + T}\phi(s)ds \le  \frac{1}{T}\int_{0}^{M_{\epsilon}}\phi(s)ds+\epsilon.
\end{align*}
Taking $T\to+\infty$ first, and then $\epsilon\rightarrow 0$, we get the desired result.
\end{proof}

Now, we will present the  proof of our first main result.

\begin{proof}[Proof of Theorem $\ref{main result 2}$]
According to Lemma \ref{Lemma6.2} and Assumption \ref{A3}, for all $t\ge0$, $T>0$ and $x\in \RR^n$,
\begin{equation}\label{F4.19}\begin{split}
\frac{1}{T} \left|\int^{t+T}_t [\bar{b}(s,x)-\bar{b}(x)]ds\right|
=& \frac{1}{T}\left|\int^{t+T}_t \left[\int_{\RR^m}b(s,x,y)\mu^x_s(dy)-\int_{\RR^m}\hat b(x,y)\mu^x(dy)\right]ds\right|\\
\le &\frac{1}{T}\left|\int^{t+T}_t\! \left[\int_{\RR^m}\!\!\!b(s,x,y)\mu^x_s(dy)\!-\!\!\int_{\RR^m}\!\!\! b(s,x,y)\mu^x(dy)\right]ds\right|\\
&+\frac{1}{T}\left|\int^{t+T}_t \!\left[\int_{\RR^m}\!\!\!b(s,x,y)\mu^x(dy)\!-\!\!\int_{\RR^m}\!\!\!\hat b(x,y)\mu^x(dy)\right]ds\right|\\
\le & C(1+|x|)\frac{1}{T}\int^{t+T}_t \left[e^{-\beta s}+\int^s_0 e^{-\beta(s-u)}\phi_2(u)\,du\right]ds\\ &+C\int_{\RR^m} (1+|x|+|y|)\mu^x(dy)\phi_1(T)\\
\le &C(1+|x|)[\phi_1(T)+\tilde{\phi}_2(T)],
\end{split}\end{equation}
where in the last inequality we used \eqref{barErgodicity2} and
\begin{align}
\tilde{\phi}_2(T):=\sup_{t\ge 0}\frac{1}{T}\int^{t+T}_t \left[e^{-\beta s}+\int^s_0 e^{-\beta(s-u)}\phi_2(u)\,du\right]ds.\label{phi5}
\end{align}
Note that, by \cite[Remark 4.2]{SWX2024}, $\lim_{s\to\infty}\!
\int^s_0 \!e^{-\beta(s-u)}\phi_2(u)\,du=0$, which yields that $\lim_{T\to \infty}\tilde{\phi}_2(T)=0$ by Lemma \ref{Pro4.4}.

Recall that $(\bar X_t^\varepsilon)_{t\ge0}$ and $(\bar X_t)_{t\ge0}$ are the unique strong solutions to the SDEs \eqref{MAE} and \eqref{e:ASDE1} respectively.
It holds that
\begin{align*}
\bar X_{t}^{\varepsilon}-\bar{X}_{t}=\int_{0}^{t}\left[\bar{b}(s/\varepsilon,\bar{X}^{\varepsilon}_{s})-\bar{b}(\bar{X}_{s})\right]ds
+\int_{0}^{t}\left[\sigma(s/\varepsilon,\bar{X}^{\varepsilon}_{s})-\bar\sigma(\bar{X}_{s})\right]dW^1_s.
\end{align*}
By \eref{ConA11} and \eref{barc1}, for any $t\in [0,T]$
\begin{align*}
 \EE|\bar X_t^\varepsilon-\bar{X}_{t}|^{2}
\le  &C_T\EE\int_0^t|\bar{b}(s/\varepsilon,\bar{X}^{\varepsilon}_{s})  -  \bar{b}(s/\varepsilon,\bar{X}_{s})|^2\,ds  +  C_T\int_0^t \EE\|\sigma(s/\varepsilon,\bar{X}^{\varepsilon}_s)-\sigma(s/\varepsilon,\bar{X}_s)\|^2\,ds \\
&+C_T\EE\left|\int_0^t\bar{b}(s/\varepsilon,\bar{X}_{s})-\bar{b}(\bar{X}_{s}) \,ds\right|^2+C_T\int_0^t \EE\|\sigma(s/\varepsilon,\bar{X}_s)-\bar\sigma(\bar{X}_s)\|^2\,ds\\
\le & C_T\int^t_0 \EE|\bar X_s^\varepsilon-\bar{X}_{s}|^{2}ds+C_T\EE\left|\int_0^t\bar{b}(s/\varepsilon,\bar{X}_{s})-\bar{b}(\bar{X}_{s}) \,ds\right|^2\\
&+C_T\int_0^t \EE\|\sigma(s/\varepsilon,\bar{X}_s)-\bar\sigma(\bar{X}_s)\|^2\,ds
\end{align*}
Thus, by the Gronwall inequality, Assumption \ref{A1}, \eqref{barc1}, \eqref{IncrEbarX}, \eqref{Lipbsigma} and \eqref{Lipbarb}, we have
\begin{align*}
& \sup_{t\in [0,T]}\EE |\bar{X}_t^\varepsilon-\bar{X}_{t} |^{2}\\
  &\le   C_T\sup_{t\in [0,T]}\EE\Big|\int_0^t\![\bar{b}(s/\varepsilon,\bar{X}_{s})-\bar{b}(\bar{X}_{s})]\,ds\Big|^2+C_T\EE\int_0^T\|\sigma(s/\varepsilon,\bar{X}_s)-\bar\sigma(\bar{X}_s)\|^2\,ds\\
 & \le  C_T\!\!\sup_{t\in [0,T]}\EE\Big|\!\int_0^t\!\![\bar{b}(s/\varepsilon,\bar{X}_{s(\delta)})\!-\!\bar{b}(\bar{X}_{s(\delta)})]\,ds\Big|^2\!\!+\!C_T\int_0^t \!\EE\|\sigma(s/\varepsilon,\bar{X}_{s(\delta)})\!-\!\bar\sigma(\bar{X}_{s(\delta)})\|^2\,ds\\
&\quad+C_T\EE\int_0^T|\bar{b}(s/\varepsilon,\bar{X}_{s})-\bar{b}(s/\varepsilon,\bar{X}_{s(\delta)})|^2\,ds+C_T\EE\int_0^T|\bar{b}(\bar{X}_{s})-\bar{b}(\bar{X}_{s(\delta)})|^2\,ds\\
&\quad+C_T\int_0^T \EE\|\sigma(s/\varepsilon,\bar{X}_{s})-\sigma(s/\varepsilon,\bar{X}_{s(\delta)})\|^2\,ds+C_T\int_0^T \EE\|\bar\sigma(\bar{X}_{s})-\bar\sigma(\bar{X}_{s(\delta)})\|^2\,ds\\
 & \le C_T\!\!\sup_{t\in [0,T]}\EE\Big|\!\int_0^t\!\![\bar{b}(s/\varepsilon,\bar{X}_{s(\delta)})\!-\!\bar{b}(\bar{X}_{s(\delta)})]\,ds\Big|^2\!+\!C_T\!\int_0^T \!\EE\|\sigma(s/\varepsilon,\bar{X}_{s(\delta)})-\bar\sigma(\bar{X}_{s(\delta)})\|^2\,ds\\
&\quad+C_T(1+|x|^2)\delta\\
& =:J^{\varepsilon}_1(T)+J^{\varepsilon}_2(T)+C_T(1+|x|^2)\delta.
\end{align*}

For $J^{\varepsilon}_1(T)$, by \eref{F4.19} and \eqref{FianlEE}, we have
\begin{align}\label{E:add1}
J^{\varepsilon}_1(T)\le & C_T\sup_{t\in [0,T]}\EE\left|\sum^{[t/\delta]-1}_{k=0}    \int^{(k+1)\delta}_{k\delta}    [\bar{b}(s/\varepsilon,\bar{X}_{k\delta})  -  \bar{b}(\bar{X}_{k\delta})]  \,ds
\right|^2\nonumber\\
& +  C_T\sup_{t\in [0,T]}\EE\left|\int^{t}_{[t/\delta]\delta}  [\bar{b}(s/\varepsilon,\bar{X}_{k\delta})  -  \bar{b}(\bar{X}_{k\delta})]  \,ds\right|^2\nonumber\\
\le & C_T\sup_{t\in [0,T]}\EE\left|\sum^{[t/\delta]-1}_{k=0}\delta\left(\frac{\varepsilon}{\delta}\int^{\frac{k\delta}{\varepsilon}+\frac{\delta}
{\varepsilon}}_{\frac{k\delta}{\varepsilon}}\bar{b}(s,\bar{X}_{k\delta})\,ds-\bar{b}(\bar{X}_{k\delta})\right)\right|^2+C_T(1+|x|^2)\delta^2\nonumber\\
\le & C_T\left[1+\EE\left(\sup_{t\in [0,T]}|\bar X_t|^2\right)\right]\left(\phi_1(\delta/\varepsilon)+\tilde{\phi}_2(\delta/\varepsilon)\right)^2+C_T(1+|x|^2)\delta^2\nonumber\\
\le & C_T (1+|x|^2 )\left(\phi_1(\delta/\varepsilon)+\tilde{\phi}_2(\delta/\varepsilon)\right)^2+C_T(1+|x|^2)\delta^2.
\end{align}

Following similar arguments as above and using \eref{Csigam} in Assumption {\rm\ref{A4}}, we can obtain
\begin{align*}
J^{\varepsilon}_2(T)\le  C_T (1+|x|^2 )\phi_3(\delta/\varepsilon)+C_T(1+|x|^2)\delta^2.
\end{align*}
Thus, by taking $\delta=\varepsilon^{1/2}$, we immediately get
 \begin{align*}
\sup_{t\in [0,T]}\EE|\bar{X}_t^\varepsilon-\bar{X}_{t}|^{2}\le C_T (1+|x|^2 )\left[(\phi_1(1/\sqrt{\varepsilon})+\tilde{\phi}_2(1/\sqrt{\varepsilon}))^2+\phi_3(1/\sqrt{\varepsilon})+\varepsilon\right].
\end{align*}
This, together with \eref{R1}, gives us that
\begin{equation}\label{e:RMK}\begin{split}
\sup_{t\in [0,T]}\EE|X_t^\varepsilon\!-\!\bar{X}_{t}|^{2}\le C_T (1\!+\!|x|^2\!+\!|y|^2 )\left[(\phi_1(1/\sqrt{\varepsilon})\!+\!\tilde{\phi}_2(1/\sqrt{\varepsilon}))^2\!+\!\phi_3(1/\sqrt{\varepsilon})\!+\!\varepsilon^{1/3}\right].
\end{split}\end{equation}
Therefore, the desired assertion follows from.
\end{proof}

\begin{remark}
The estimate \eqref{e:RMK} indicates that the strong convergence rate is influenced by how fast $\phi_i(T)$ converges to $0$ as $T\to \infty$ for all $i=1,2,3$.
We shall note that the rate given in \eqref{e:RMK} is not optimal.
\end{remark}

\section{Proof of the weak averaging principle}

This section will give the proof of Theorem $\ref{main result 3}$.  We first consider the tightness of $\{X^{\varepsilon}\}_{\varepsilon\in (0,1]}$ in $C([0,T];\RR^{n})$, then we identify the limiting process by the martingale problem approach.

\begin{proposition}\label{pro4.6}
 Suppose that Assumptions {\rm\ref{A1}} and {\rm\ref{A2}} hold. Then, for $T>0$, the distribution of
 $\{X^\varepsilon\}_{\varepsilon\in(0,1]}$ is tight in $C([0,T];\RR^{n})$.
\end{proposition}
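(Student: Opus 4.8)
The plan is to read off the tightness directly from the uniform a priori estimates already proved in Lemma~\ref{PMY}, via the classical Kolmogorov tightness criterion in $C([0,T];\RR^n)$. Recall that this criterion guarantees tightness of a family $\{\xi^\varepsilon\}_{\varepsilon\in(0,1]}$ of $C([0,T];\RR^n)$-valued random variables as soon as (i) the family of initial values $\{\xi^\varepsilon_0\}_{\varepsilon\in(0,1]}$ is tight in $\RR^n$, and (ii) there exist constants $p,q>0$ and $C>0$, \emph{independent of $\varepsilon$}, such that $\EE|\xi^\varepsilon_t-\xi^\varepsilon_s|^{p}\le C|t-s|^{1+q}$ for all $s,t\in[0,T]$ and all $\varepsilon\in(0,1]$.

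First I would dispose of condition (i): since $X^\varepsilon_0=x$ is one and the same deterministic point of $\RR^n$ for every $\varepsilon$, the family $\{X^\varepsilon_0\}_{\varepsilon\in(0,1]}=\{\delta_x\}$ is trivially tight. Next, for condition (ii), I would invoke the increment estimate \eqref{IncrE} of Lemma~\ref{PMY}: given $0\le s<t\le T$, apply \eqref{IncrE} with $h=t-s$ and use $|X^\varepsilon_t-X^\varepsilon_s|\le\sup_{s\le r\le t}|X^\varepsilon_r-X^\varepsilon_s|$ to get
\[
\EE|X^\varepsilon_t-X^\varepsilon_s|^{4}\le \EE\Big(\sup_{s\le r\le t}|X^\varepsilon_r-X^\varepsilon_s|^{4}\Big)\le C_T\big(1+|x|^{4}+|y|^{4}\big)\,|t-s|^{2},
\]
which is precisely (ii) with $p=4$, $q=1$, and $C=C_T(1+|x|^4+|y|^4)$. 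The key point is that this constant is independent of $\varepsilon\in(0,1]$, since every constant in Lemma~\ref{PMY} depends only on $T$ (and the fixed exponent $4$). Combining (i) and (ii), the Kolmogorov criterion immediately yields that $\{X^\varepsilon\}_{\varepsilon\in(0,1]}$ is tight in $C([0,T];\RR^n)$; by Prokhorov's theorem this also gives, along any sequence $\varepsilon_k\to0$, a weakly convergent subsequence, which is the statement needed for Step~1 of the weak averaging principle.

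There is essentially no genuine obstacle at this stage: the substantive work — the uniform-in-$\varepsilon$ fourth-moment bound on the increments of $X^\varepsilon$ — has already been carried out in Lemma~\ref{PMY}, resting in turn on the uniform moment bounds \eqref{X}--\eqref{Y} for $(X^\varepsilon_t,Y^\varepsilon_t)_{t\ge0}$, which themselves come from the partially dissipative estimate \eqref{A23} for $f$. The only minor care needed in writing up is to state the Kolmogorov criterion with the $\varepsilon$-uniformity explicitly and to note that $\sigma$ may depend on $y$ here (Assumption~\ref{A5}), which is harmless because \eqref{IncrE} already accommodates the general coefficient $\sigma(t,x,y)$ through the bound $\|\sigma(t,x,y)\|\le C_0(1+|x|)$ in \eqref{ConA12}. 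An alternative, equally short route would be to verify Aldous' tightness criterion using \eqref{X} and \eqref{IncrE}, but the Kolmogorov argument above is the most direct.
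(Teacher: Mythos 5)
Your proof is correct and follows essentially the same route as the paper: both arguments rest entirely on the uniform increment estimate \eqref{IncrE} of Lemma \ref{PMY} together with the trivial tightness of the deterministic initial value $X^\varepsilon_0=x$. The only (cosmetic) difference is that the paper verifies Billingsley's criterion \cite[Theorem 7.3]{B1999} by converting \eqref{IncrE} into a probability bound via Chebyshev's inequality, whereas you invoke the moment form of the Kolmogorov tightness criterion directly with $p=4$, $q=1$; the two formulations are equivalent here.
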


\begin{proof}
In order to prove that  $\{X^\varepsilon\}_{\varepsilon\in(0,1]}$ is tight in $C([0,T];\RR^{n})$, we will apply \cite[Theorem 7.3]{B1999}. For this, we need check the following two conditions.

(i) For any $\eta>0$, there exist constants $\tilde K,\varepsilon_0>0$ such that
\begin{equation}\label{T1}
\sup_{0<\varepsilon<\varepsilon_0}\mathbb{P}\big(|X^\varepsilon_0|\ge \tilde K\big)\le \eta.
\end{equation}

(ii) For any $\theta,\eta>0$, there exist constants $\varepsilon_0, h>0$ such that
\begin{equation}\label{T2}
\sup_{0<\varepsilon<\varepsilon_0}\mathbb{P}\Big(\sup_{t_1,t_2\in[0,T] \hbox{ with } |t_1-t_2|<h}|X^\varepsilon_{t_1}-X^\varepsilon_{t_2}|\ge \theta\Big)\le  \eta.
\end{equation}

Since $X^\varepsilon_0=x$, \eref{T1} holds obviously. On the other hand, according to  \cite[(7.12), p.\ 83]{B1999}, a sufficient condition to guarantee \eref{T2} holds is that: for any constant $\theta,\eta>0$, there exist constants $\varepsilon_0,h$ such that for any $0\le  t\le  t+h\le  T$,
\begin{equation}\label{T3}
\sup_{0<\varepsilon<\varepsilon_0}h^{-1}\mathbb{P}\Big(\sup_{t\le  s\le  t+h}|X_{s}^{\varepsilon}-X_{t}^{\varepsilon}|\ge \theta\Big)\le  \eta.
\end{equation}
Indeed, by Chebyshev's inequality and \eref{IncrE}, for any $\theta>0$ and $0\le  t\le  t+h\le  T$,
$$
h^{-1}\PP\big(\sup_{t\le  s\le  t+h}|X_{s}^{\varepsilon}-X_{t}^{\varepsilon}|\ge \theta\big)\le  \frac{C_T\left(1+|x|^{4}+|y|^4\right) h}{\theta^4}.
$$
This shows that \eref{T3}
holds. The proof is complete.
\end{proof}

Next, we study the existence and uniqueness of the strong solution to the averaged SDE \eqref{e:ASDE2}, and establish its a priori estimates.

\begin{lemma} Suppose that Assumptions {\rm\ref{A1}}, {\rm\ref{A2}}, {\rm\ref{A3}} and {\rm\ref{A5}} hold.
For any $x\in\RR^{n}$, the SDE \eqref{e:ASDE2} has a unique strong solution $(X_t)_{t\ge0}$. Moreover, for any $T>0$, there exists a constant $C_{T}>0$ such that
\begin{align}
\mathbb{E}\left(\sup_{t\in [0, T]}|X_{t}|^{2}\right)\le  C_{T}(1+|x|^{2}).\label{FianlEE2}
\end{align}
\end{lemma}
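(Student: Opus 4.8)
The plan is to reduce the statement to the fact that the coefficients $\bar b$ and $\Theta$ of the averaged SDE \eqref{e:ASDE2} are locally Lipschitz continuous with at most linear growth; granting this, the existence and uniqueness of a non-exploding strong solution is standard (local Lipschitz continuity gives local existence and uniqueness, linear growth excludes explosion), and \eqref{FianlEE2} follows by applying It\^o's formula to $|X_{t}|^{2}$, first stopped at the exit time $\tau_{N}$ of the ball of radius $N$, bounding $2\langle x,\bar b(x)\rangle+\mathrm{tr}(\bar\Sigma(x))\le C(1+|x|^{2})$, using the Burkholder--Davis--Gundy inequality on the stochastic integral and Gronwall's inequality, and then letting $N\to\infty$ via Fatou's lemma. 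For the drift $\bar b(x)=\int_{\RR^{m}}\hat b(x,y)\,\mu^{x}(dy)$ nothing new is needed: \eqref{Cb} and \eqref{ConA12} give $|\hat b(x,y)|\le C(1+|x|+|y|)$, which together with $\int_{\RR^{m}}|y|\,\mu^{x}(dy)\le C(1+|x|)$ from \eqref{barErgodicity2} yields $|\bar b(x)|\le C(1+|x|)$, and the global Lipschitz bound $|\bar b(x_{1})-\bar b(x_{2})|\le C|x_{1}-x_{2}|$ is exactly \eqref{Lipbarb}, obtained by splitting the difference into a piece controlled by the $x$-Lipschitz bound in \eqref{Lipbsigma} and a piece controlled, through the Kantorovich duality for $\mathbb{W}_{1}$, by the $y$-Lipschitz continuity of $\hat b(x_{2},\cdot)$ together with $\mathbb{W}_{1}(\mu^{x_{1}},\mu^{x_{2}})\le C|x_{1}-x_{2}|$ of \eqref{Lipmux}.

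The substance of the argument concerns $\Theta=\bar\Sigma^{1/2}$ with $\bar\Sigma(x)=\int_{\RR^{m}}\Sigma(x,y)\,\mu^{x}(dy)$. Writing $\sigma\sigma^{\ast}$ as a telescoping difference and using the Lipschitz bound and the $y$-independent linear growth in \eqref{ConA11}--\eqref{ConA12}, one obtains, uniformly in $s$, the local Lipschitz bound $\|(\sigma\sigma^{\ast})(s,x_{1},y_{1})-(\sigma\sigma^{\ast})(s,x_{2},y_{2})\|\le C(1+|x_{1}|+|x_{2}|)(|x_{1}-x_{2}|+|y_{1}-y_{2}|)$ and the growth bound $\|(\sigma\sigma^{\ast})(s,x,y)\|\le C(1+|x|^{2})$. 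Inserting these into \eqref{Csigam1} and letting $T\to\infty$ transfers to $\Sigma$ the bounds $\|\Sigma(x_{1},y)-\Sigma(x_{2},y)\|\le C(1+|x_{1}|+|x_{2}|)|x_{1}-x_{2}|$, $\|\Sigma(x,y_{1})-\Sigma(x,y_{2})\|\le C(1+|x|)|y_{1}-y_{2}|$ and $\|\Sigma(x,y)\|\le C(1+|x|^{2})$; the same comparison with the (symmetric, uniformly positive definite) time-averages of $\sigma\sigma^{\ast}$ forces the antisymmetric part of $\Sigma$ to vanish and forces $\langle\Sigma(x,y)z,z\rangle\ge\lambda_{0}|z|^{2}$, where $\lambda_{0}>0$ is the infimum in \eqref{NonD}. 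Hence $\bar\Sigma(x)$ is symmetric, $\lambda_{0}I\le\bar\Sigma(x)$, $\|\bar\Sigma(x)\|\le C(1+|x|^{2})$, and, splitting $\bar\Sigma(x_{1})-\bar\Sigma(x_{2})$ exactly as in \eqref{Lipbarb} (using the $x$-Lipschitz bound on $\Sigma(\cdot,y)$ for one piece, and the $\mathbb{W}_{1}$-duality with the $y$-Lipschitz bound on $\Sigma(x_{2},\cdot)$ and \eqref{Lipmux} for the other), $\|\bar\Sigma(x_{1})-\bar\Sigma(x_{2})\|\le C(1+|x_{1}|+|x_{2}|)|x_{1}-x_{2}|$.

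It then remains to pass to the square root, and this is the step where the nondegeneracy \eqref{NonD} is essential. On the set of symmetric matrices $A\ge\lambda_{0}I$ the map $A\mapsto A^{1/2}$ is Lipschitz: if $A,B\ge\lambda_{0}I$ then $X:=A^{1/2}-B^{1/2}$ solves the Sylvester equation $A^{1/2}X+XB^{1/2}=A-B$, and testing against $X$ in the Frobenius inner product gives $2\sqrt{\lambda_{0}}\,\|X\|_{F}^{2}\le\|A-B\|_{F}\,\|X\|_{F}$, whence $\|A^{1/2}-B^{1/2}\|\le C_{\lambda_{0}}\|A-B\|$. Combined with the previous paragraph this gives $\|\Theta(x_{1})-\Theta(x_{2})\|\le C(1+|x_{1}|+|x_{2}|)|x_{1}-x_{2}|$, while $\|\Theta(x)\|=\|\bar\Sigma(x)\|^{1/2}\le C(1+|x|)$. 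Thus $\bar b$ and $\Theta$ are locally Lipschitz with linear growth, and the two claims of the lemma follow as in the first paragraph. I expect the main obstacle to be exactly this treatment of $\Theta$: extracting the Lipschitz, growth, symmetry and positivity properties of $\Sigma$ from the merely asymptotic assumption \eqref{Csigam1}, and then controlling the matrix square root via uniform ellipticity; the part involving $\bar b$ and the final moment estimate is routine and parallel to Lemma \ref{FianlW}.
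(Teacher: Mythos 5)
Your proposal is correct and follows essentially the same route as the paper: transfer local Lipschitz continuity, quadratic growth and uniform ellipticity from $\sigma\sigma^{\ast}$ to $\Sigma$ via \eqref{Csigam1} and \eqref{NonD}, pass to $\bar\Sigma$ using \eqref{Lipmux} and Kantorovich duality, and then control $\Theta=\bar\Sigma^{1/2}$ through the Lipschitz continuity of the matrix square root on uniformly positive definite matrices, after which existence, uniqueness and \eqref{FianlEE2} are standard. The only cosmetic difference is that you prove the square-root Lipschitz estimate directly via a Sylvester-equation argument, whereas the paper cites the elementary bound $\|A^{1/2}-B^{1/2}\|\le \frac{1}{2\gamma}\|A-B\|$ from \cite{PW2006}.
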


\begin{proof}
It has been proved in \eref{Lipbarb} that the drift term $\bar b$ is Lipschitz continuous. Next, we will check that there exists a constant $C>0$ such that for all $x_1,x_2\in \RR^n$,
\begin{align}
&\|\Theta(x_1)-\Theta(x_2)\|\le  C(1+|x_1|+|x_2|)|x_1-x_2|,\label{Lipbarsigma}\\
&\|\Theta(x_1)\|\le  C(1+|x_1|).\label{Linearbarsigma}
\end{align}
In fact, by \eref{ConA11} and \eqref{ConA12}, there exists a constant $C>0$ such that for all $t\ge0$, $x_1,x_2\in \RR^n$ and $y_1,y_2\in \RR^m$,
$$
\|(\sigma\sigma^{\ast})(t,x_1,y_1)-(\sigma\sigma^{\ast})(t,x_2,y_2)|\le  C(1+|x_1|+|x_2|)(|x_1-x_2|+|y_1-y_2|).$$
Then, according to Assumption \ref{A5}, for all $x_1,x_2\in \RR^n$ and $y_1,y_2\in \RR^m$,  $i=1,2$,
\begin{align}
\|\Sigma(x_1,y_1)-\Sigma(x_2,y_2)|\le  C(1+|x_1|+|x_2|)(|x_1-x_2|+|y_1-y_2|).\label{LipSigma}
\end{align}
Furthermore, since $\bar\Sigma(x)=\int_{\RR^m}\Sigma(x,y)\mu^x(dy), $
 \eref{LipSigma} and \eref{Lipmux} imply that for all $x_1,x_2\in \RR^n$,
$$
\|\bar\Sigma(x_1)-\bar\Sigma(x_2)\|\le  C(1+|x_1|+|x_2|)|x_1-x_2|.$$
In particular,
$\|\bar\Sigma(x)\|\le  C(1+|x|^2)$ for all $x\in \RR^n$, and so we get
$$
\|\bar\Sigma^{1/2}(x)\|\le  \sqrt{n}\|\bar\Sigma(x)\|^{1/2}\le  C\sqrt{n}(1+|x|).
$$
Thus \eref{Linearbarsigma} holds.

On the other hand, the conditions \eqref{Csigam1} and \eqref{NonD} imply that $\Sigma(x,y)$ is uniformly non-degenerate; that is,
$$
\inf_{x\in\RR^n,y\in\RR^m,z\in \RR^n\backslash\{\textbf{0}\}}\frac{\langle \Sigma(x,y)\cdot z, z\rangle}{|z|^2}>0. $$  Hence,  $\bar\Sigma(x)$ is also uniformly non-degenerate in the sense that
$$
\inf_{x\in\RR^n,z\in \RR^n\backslash\{\textbf{0}\}}\frac{\langle \bar\Sigma(x)\cdot z, z\rangle}{|z|^2}>0. $$

In the following, we will need an elementary fact (see \cite{PW2006}). Let $A$ and $B$ be two $n\times n$ symmetric positive matrixes such that all eigenvalues of $A$ and $B$ are not less than $\gamma>0$. Then, it holds that
$$
\|A^{1/2}-B^{1/2}\|\le  \frac{1}{2\gamma}\|A-B\|.
$$
Combining it with all the estimates above, we get that for all $x_1,x_2\in \RR^n$,
$$
\|\bar{\Sigma}^{1/2}(x_1)-\bar{\Sigma}^{1/2}(x_2)\|\le  C(1+|x_1|+|x_2|)|x_1-x_2|,
$$
which yields that \eref{Lipbarsigma} holds.

Therefore, \eref{Lipbarb}, \eref{Lipbarsigma} and \eref{Linearbarsigma} imply the SDE \eref{e:ASDE2} admits a unique strong solution $(X_t)_{t\ge0}$. Moreover, \eref{FianlEE2} holds obviously. The proof is complete.		
\end{proof}

Finally, we give the  proof of our second main result.

\begin{proof}[Proof of Theorem $\ref{main result 3}$] Fix $T>0$. According to Proposition \ref{pro4.6}, it is sufficient to prove that, for any sequence $\{\varepsilon_k\}_{k\ge1}$ so that $\varepsilon_k\to 0$ as $k\to\infty$, there exists a subsequence (for simplicity we still denote by $\{\varepsilon_{k}\}_{k\ge1}$) such that $X^{\varepsilon_k}$ converges weakly to $X:=(X_t)_{t\ge0}$ in $C([0,T];\RR^{n})$, which is the unique strong solution to the SDE \eref{e:ASDE2}. Without loss of generality, we may assume that $X^{\varepsilon_k}$ converges almost surely to $X$ in $C([0,T];\mathbb{R}^{n})$ by the Skorohod representation theorem.

In the following, we shall apply the martingale problem approach to characterize the limit process $X$.
Let $\Psi_{t_0}(\cdot)$ be a  bounded, continuous and $\sigma\{\varphi_s: \varphi\in C([0,T];\RR^{n}), s\le  t_0\}$-measurable function on $C([0,T];\RR^{n})$.  It suffices to prove that for any $U\in C^3_b(\RR^{n})$, the following assertion holds:
\begin{align}
\EE\left[\left(U(X_t)-U(X_{t_0})-\int^t_{t_0}\bar{\mathcal{L}} U(X_s)ds\right)\Psi_{t_0}(X)\right]=0,\quad t_0\le  t\le  T,\label{F3.33}
\end{align}
where $\bar{\mathcal{L}}$ is the generator of the process $(X_t)_{t\ge0}$ to the SDE \eref{e:ASDE2}, i.e.,
$$
\bar{\mathcal{L}} U(x):=\Big\langle \nabla U(x), \bar{b}(x)\Big\rangle+\frac{1}{2}\text{Tr}\left[\bar{\Sigma}(x)\nabla^2 U(x)\right].$$

Applying It\^{o}'s formula, we have
\begin{align*}
U(X^{\varepsilon_k}_t)=&U(X^{\varepsilon_k}_{t_0})\!+\!\!\int^t_{t_0}\!\langle \nabla U(X^{\varepsilon_k}_s),b(s/\varepsilon_k,X^{\varepsilon_k}_s, Y^{\varepsilon_k}_s)\rangle ds\!+\!\!\int^t_{t_0}\!\langle \nabla U(X^{\varepsilon_k}_s),\sigma(s/\varepsilon_k,X^{\varepsilon_k}_s, Y^{\varepsilon_k}_s)d W_s\rangle\nonumber\\
&+\frac{1}{2}\int^t_{t_0}\text{Tr}\left[(\sigma\sigma^{\ast})(s/\varepsilon_k,X^{\varepsilon_k}_s,Y^{\varepsilon_k}_s)\nabla^2 U(X^{\varepsilon_k}_s)\right]ds.
\end{align*}
Note that, by the Markov property and the martingale property,
$$
\EE\left\{\left[\int^t_{t_0}\langle \nabla U(X^{\varepsilon_k}_s),\sigma(X^{\varepsilon_k}_s, Y^{\varepsilon_k}_s)d W_s\rangle\right]\Psi_{t_0}(X^{\varepsilon_k})\right\}=0.
$$ Since $X^{\varepsilon_k}$ converges almost surely to $X$ in $C([0,T];\mathbb{R}^{n})$, it holds that
\begin{align*}
&\lim_{k\to\infty} \EE\left\{\left[U(X^{\varepsilon_k}_t)-U(X^{\varepsilon_k}_{t_0})\right]\Psi_{t_0}(X^{\varepsilon_k})\right\}
=\EE\left\{\left[U(X_t)-U(X_{t_0})\right]\Psi_{t_0}(X)\right\},\\
&\lim_{k\to \infty} \EE\Big\{\Big[\int^t_{t_0}\langle \nabla U(X^{\varepsilon_k}_s),\bar{b}(X^{\varepsilon_k}_s)\rangle ds\Big]\Psi_{t_0}(X^{\varepsilon_k})\Big\}=\EE\Big\{\Big[\int^t_{t_0}\langle \nabla U(X_s),\bar{b}(X_s)\rangle ds\Big]\Psi_{t_0}(X)\Big\},\\
&\lim_{k\to \infty} \EE\Big\{\Big[\int^t_{t_0}\!\text{Tr}\left(\bar \Sigma(X^{\varepsilon_k}_s) \nabla^2 U(X^{\varepsilon_k}_s)\right)ds\Big]\Psi_{t_0}(X^{\varepsilon_k})\Big\}=\EE\Big\{\Big[\!\int^t_{t_0}\!\text{Tr}\left(\bar\Sigma(X_s) \nabla^2 U(X_s)\right)ds\Big]\Psi_{t_0}(X)\Big\}.
\end{align*}
Hence, in order to obtain \eref{F3.33}, we only need to verify that for all $ t_0\le  t\le  T$,
\begin{align}
&\lim_{k\to \infty} \EE\left\{\left[\int^t_{t_0} \langle \nabla U(X^{\varepsilon_k}_s),b(s/\varepsilon_k,X^{\varepsilon_k}_s, Y^{\varepsilon_k}_s) -\bar{b}(X^{\varepsilon_k}_s)\rangle ds\right]\Psi_{t_0}(X^{\varepsilon_k})\right\}=0,\label{F3.34}\\
&\lim_{k\to \infty} \EE\left\{\left[\int^t_{t_0}\text{Tr}\left[\left((\sigma\sigma^{\ast})(s/\varepsilon_k,X^{\varepsilon_k}_s,Y^{\varepsilon_k}_s)-\bar\Sigma(X^{\varepsilon_k}_s)\right)\nabla^2 U(X^{\varepsilon_k}_s)\right]ds\right]\Psi_{t_0}(X^{\varepsilon_k})\right\}=0.\label{F3.35}
\end{align}

Note that
\begin{align*}
&\left|\EE\left\{\left[\int^t_{t_0}\Big\langle \nabla U(X^{\varepsilon_k}_s),b(s/\varepsilon_k,X^{\varepsilon_k}_s, Y^{\varepsilon_k}_s)-\bar{b}(X^{\varepsilon_k}_s)\Big\rangle ds\right]\Psi_{t_0}(X^{\varepsilon_k})\right\}\right| \\
&\le C\EE\left|\int^t_{t_0}\left(\Big\langle \nabla U(X^{\varepsilon_k}_s),b(s/\varepsilon_k,X^{\varepsilon_k}_s, Y^{\varepsilon_k}_s)\Big\rangle -\Big\langle \nabla U(X^{\varepsilon_k}_s),\bar{b}(s/\varepsilon_k,X^{\varepsilon_k}_s)\Big\rangle\right)ds\right| \\
&\quad +C\EE\left|\int^t_{t_0}\left(\Big\langle \nabla U(X^{\varepsilon_k}_s),\bar{b}(s/\varepsilon_k,X^{\varepsilon_k}_s)\Big\rangle -\Big\langle \nabla U(X^{\varepsilon_k}_s),\bar{b}(X^{\varepsilon_k}_s)\Big\rangle\right)ds\right| \\
&=:CQ^{\varepsilon_k}_1+CQ^{\varepsilon_k}_2.
\end{align*}
Set
$$B(t,x,y):=\Big\langle \nabla U(x),b(t,x, y)-\bar b(t,x)\Big\rangle.$$
Then, for all $t\ge0$, $x_1,x_2\in \RR^n$ and $y_1,y_2\in \RR^m$,
\begin{align}\label{LipB}\begin{split}
&|B(t,x_1,y_1)-B(t,x_2,y_2)|\le  C(1+|x_1|+|y_1|)|x_1-x_2|+C|y_1-y_2|,\\
&|B(t,x_1,y_1)|\le  C(1+|x_1|+|y_1|).
\end{split}
\end{align}
We have
\begin{align*}
Q^{\varepsilon_k}_1= &\EE\left|\int^t_{t_0} B(s/\varepsilon_k,X^{\varepsilon_k}_s, Y^{\varepsilon_k}_s)ds\right|\\
\le &\EE\left|\int^t_{t_0}\Big[ B(s/\varepsilon_k,X^{\varepsilon_k}_s, Y^{\varepsilon_k}_s) -B(s/\varepsilon_k,X^{\varepsilon_k}_s, \hat Y^{\varepsilon_k}_s)\Big] ds\right|\\
&+\!\EE\left|\int^t_{t_0}\! \Big[B(s/\varepsilon_k,X^{\varepsilon_k}_s, \hat Y^{\varepsilon_k}_s) \!-\!B(s/\varepsilon_k,X^{\varepsilon_k}_{s(\delta)}, \hat Y^{\varepsilon_k}_s)\Big] ds\right|\!+\!\EE\left|\int^t_{t_0} \!\!B(s/\varepsilon_k,X^{\varepsilon_k}_{s(\delta)}, \hat Y^{\varepsilon_k}_s) ds\right|\\
=&:Q^{\varepsilon_k}_{11}+Q^{\varepsilon_k}_{12}+Q^{\varepsilon_k}_{13}.
\end{align*}
By \eqref{LipB} and the same argument used in the proof of Lemma \ref{DEY}, we have
\begin{align*}
Q^{\varepsilon_k}_{11}\le  &\left[\EE\left|\int^t_{t_0} B(s/\varepsilon_k,X^{\varepsilon_k}_s, Y^{\varepsilon_k}_s) -B(s/\varepsilon_k,X^{\varepsilon_k}_s, \hat Y^{\varepsilon_k}_s) ds\right|^2\right]^{1/2}\nonumber\\
\le  &C\Big[\EE\!\!\int_{t_0}^{t}\!\!\int_{r}^{t}\!\!\Big\langle B(s/\varepsilon_k,X_{s}^{\varepsilon_k},Y_{s}^{\varepsilon_k})\!-\!B(s/\varepsilon_k,X_{s}^{\varepsilon_k},\hat{Y}_{s}^{\varepsilon_k}), \!B(r/\varepsilon_k, X_{r}^{\varepsilon_k},Y_{r}^{\varepsilon_k})\!-\!B(r/\varepsilon_k,X_{r}^{\varepsilon_k},\hat{Y}_{r}^{\varepsilon_k})\Big\rangle ds dr\Big]^{1/2}\nonumber\\
\le  &C_T (1+|x|^{3/2}+|y|^{3/2}) \delta^{1/4}.
\end{align*}
According to \eqref{LipB}, \eref{IncrE}, \eqref{X} and Lemma \ref{MDY}, we have
\begin{align*}
Q^{\varepsilon_k}_{12}\le \int^t_{t_0} \EE\left[(1+|X^{\varepsilon_k}_s|+|\hat Y^{\varepsilon_k}_s|)|X^{\varepsilon_k}_s-X^{\varepsilon_k}_{s(\delta)}|\right]ds\le C_{T} (1+|x|^2+|y|^2)   \delta^{1/2}.
\end{align*}
Following the argument for \eqref{I2} in the proof of Lemma \ref{lemma 3.2}, we have
\begin{align*}
Q^{\varepsilon_k}_{13}\le \left[\EE\left|\int^t_{t_0} B(s/\varepsilon_k,X^{\varepsilon_k}_{s(\delta)}, \hat Y^{\varepsilon_k}_s) ds\right|^2\right]^{1/2}\le C_{T}(1+|x|+|y|)\left[\left(\varepsilon_k/\delta\right)^{1/2}+\delta\right].
\end{align*}
Therefore, putting all the estimates together, we obtain that
\begin{align}
Q^{\varepsilon_k}_1\le  C_{T}(1+|x|^2+|y|^2)\left[\left(\varepsilon_k/\delta\right)^{1/2} +\delta^{1/4}\right].\label{Q1}
\end{align}

For $Q^{\varepsilon_k}_2$, by \eref{F4.19} and the arguments for \eqref{E:add1} in the proof of Theorem \ref{main result 2}, we can easily obtain
\begin{align}
Q^{\varepsilon_k}_2 \le  C_T(1+|x|+|y|)\left[\phi_1(\delta/\varepsilon_k)+\phi_5(\delta/\varepsilon_k)+\delta\right].\label{Q2}
\end{align}

Hence, combining \eref{Q1} with \eref{Q2}, and taking $\delta=\varepsilon_k^{1/2}$, we obtain that \eref{F3.34} holds.

Thirdly, according to Assumption \ref{A5} and Lemma \ref{Lemma6.2},  for all $t\ge0$, $T>0$ and $x\in \RR^n$,
\begin{align*}
&\frac{1}{T} \left\|\int^{t+T}_t\left[\overline{\sigma\sigma^{\ast}}(s,x)-\bar{\Sigma}(x) \right]ds\right\|\nonumber\\
 & =\frac{1}{T}\left\|\int^{t+T}_t \left[\int_{\RR^m}(\sigma\sigma^{\ast})(s,x,y)\mu^x_s(dy)-\int_{\RR^m}\Sigma(x,y)\mu^x(dy)\right]ds\right\|\nonumber\\
  &\le \frac{1}{T}\left\|\int^{t+T}_t \left[\int_{\RR^m}(\sigma\sigma^{\ast})(s,x,y)\mu^x_s(dy)-\int_{\RR^m} (\sigma\sigma^{\ast})(s,x,y)\mu^x(dy)\right]ds\right\|\nonumber\\
&\quad + \frac{1}{T}\left\|\int^{t+T}_t\int_{\RR^m}\left[(\sigma\sigma^{\ast})(s,x,y)-\Sigma(x,y)\right]\mu^x(dy)ds\right\|ds\nonumber\\
 &=\frac{1}{T}\left\|\int^{t+T}_t \left[\int_{\RR^m}(\sigma\sigma^{\ast})(s,x,y)\mu^x_s(dy)-\int_{\RR^m} (\sigma\sigma^{\ast})(s,x,y)\mu^x(dy)\right]ds\right\|\nonumber\\
&\quad + \int_{\RR^m}\frac{1}{T}\left\|\int^{t+T}_t\left[(\sigma\sigma^{\ast})(s,x,y)-\Sigma(x,y)\right]ds\right\|\mu^x(dy)\nonumber\\
 & \le C(1+|x|)\frac{1}{T}\int^{t+T}_t\!\! \left(e^{-\beta s}+\int^s_0 e^{-\beta(s-u)}\phi_2(u)\,du\right) ds\!+\!C\phi_4(T)\int_{\RR^m} (1+|x|^2)\mu^x(dy)\nonumber\\
 &\le C(1+|x|^2)[\phi_4(T)+\tilde{\phi}_2(T)],
\end{align*}
where $\tilde{\phi}_2(T)$ is defined by \eref{phi5}. With the aid of this, one can follow the proof of  \eref{F3.34} to obtain that \eref{F3.35} also holds.

According to Lemma \ref{FianlW}, the averaged SDE \eref{e:ASDE2} has a unique strong solution $(X_t)_{t\ge0}$. Hence, the weak solution is also unique.
Since the uniqueness of the solution to the martingale problem with the operator $\bar{\mathcal{L}}$ is equivalent to the uniqueness of the weak solution to the SDE \eref{e:ASDE2}, the limiting process $(X_t)_{t\ge0}$ satisfying \eref{F3.33} is the unique solution to the SDE \eref{e:ASDE2} in the weak sense. The proof is complete. \end{proof}

\medskip

\noindent \textbf{Acknowledgment}. The research of Xiaobin Sun is supported by the NSF of China (Nos.
12271219, 12090010 and 12090011).
The research of Jian Wang is supported by the National Key R\&D Program of China (2022YFA1006003) and
the NSF of China (No.\ 12225104). The research of Yingchao Xie is supported by the NSF of China  (No.\ 12471139) and the Priority Academic Program Development of Jiangsu Higher Education Institutions.

\end{document}